\documentclass[12pt]{article}
\usepackage[T1]{fontenc}
\usepackage{amsmath,amssymb,amsthm}
\usepackage{booktabs}
\usepackage{float}
\usepackage{cite}
\usepackage{geometry}
\usepackage{microtype}
\usepackage{newtxtext,newtxmath}
\usepackage{bm}
\usepackage{hyperref}
\hypersetup{
  colorlinks=true,
  linkcolor=blue,
  citecolor=red,
  urlcolor=blue
}

\allowdisplaybreaks[4]
\newtheorem{Theorem}{Theorem}[section]
\newtheorem{Lemma}[Theorem]{Lemma}
\newtheorem{prop}[Theorem]{Proposition}
\newtheorem{coro}[Theorem]{Corollary}
\newtheorem{con}[Theorem]{Conjecture}

\newtheorem{claim}[Theorem]{Claim}

\theoremstyle{remark}
\newtheorem{remark}[Theorem]{Remark}

\numberwithin{equation}{section}

\begin{document}

\begin{center}
{\Large\bfseries
Covering the Hypercube $mB^n$
}\\ [7pt]
\end{center}

\begin{center}
    Zihao Huang$^{1}$,
    Miao Wang$^{*2}$
    and Suijie Wang$^{3}$\\[8pt]
    $^{1,2,3}$School of Mathematics\\
    Hunan University\\
    Changsha 410082, Hunan, P. R. China\\[12pt]
    
    Emails: $^{1}$zihaoh@hnu.edu.cn, $^{2}$miaowangmath@hnu.edu.cn, $^{3}$wangsuijie@hnu.edu.cn\\[3pt]
    $^*$Corresponding author.\\[12pt]
    
\end{center}

\vskip 3mm
\begin{abstract}
The Alon--Füredi theorem determines the minimum number of hyperplanes needed to cover every nonzero vertex of the Boolean cube while avoiding the origin. We study hyperplane coverings with multiplicity for the generalized hypercube
$mB^n=\{0,1,\ldots,m\}^n$. Our main result is a polynomial multiplicity theorem extending the theorem of Sauermann and Wigderson to $mB^n$, with sharp degree bounds for polynomials having prescribed high-order zeros. We apply this result to obtain lower and upper bounds for hyperplane coverings with multiplicity and determine the exact minimum number of hyperplanes for coverings with multiplicities $k=1,2$. In the special case of the Boolean cube, our constructions improve the upper bound of Clifton and Huang for certain ranges of the parameters.

    \vskip 6pt

    \noindent
    \textbf{Mathematics Subject Classification (2020):}
    05B40 (primary); 05D40, 05E05, 52C17 (secondary)
    \\ [7pt]
    \textbf{Keywords:}
 Hyperplane coverings with multiplicity; Boolean cube;
 Combinatorial Nullstellensatz; polynomial method;
 multiplicity of polynomial zeros; symmetric polynomials
    
\end{abstract}

\section{Introduction}
    Alon's Combinatorial Nullstellensatz \cite{Alon1} is one of the most powerful algebraic tools in modern combinatorics. The most classical application of the Combinatorial Nullstellensatz, due to Alon and F\"uredi \cite{Alon2}, is to solve a problem of Komj\'ath \cite{Komjath} that asks for the minimum number of hyperplanes in $\mathbb{R}^n$ that cover every point of the Boolean cube $B^n=\left\{0,1\right\}^n$ except the origin $\bm{0}$.

\begin{Theorem}[Alon and F\"uredi, 1993]\label{Alon2}
 For any polynomial $P\in \mathbb{R}[x_1,\ldots,x_n]$, if $P(\bm{0}) \ne 0$ and $P$ vanishes at every point of $B^n\setminus\{\bm{0}\}$, then $\deg P\ge n$.
\end{Theorem}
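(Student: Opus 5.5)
We argue by contradiction via Alon's Combinatorial Nullstellensatz. Suppose $P$ satisfies $P(\bm{0})\neq 0$, vanishes on $B^n\setminus\{\bm{0}\}$, and has $\deg P\le n-1$. We build an auxiliary polynomial of degree exactly $n$ whose top monomial $x_1x_2\cdots x_n$ has nonzero coefficient but which vanishes on all of $B^n=S_1\times\cdots\times S_n$ with $S_i=\{0,1\}$. This contradicts the Nullstellensatz, since $|S_i|=2>1$ is the exponent of $x_i$ in that monomial plus one.

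Concretely, set
\[
Q(x_1,\ldots,x_n)=P(x_1,\ldots,x_n)-P(\bm{0})\prod_{i=1}^n(1-x_i).
\]
First, $Q$ vanishes on $B^n$. At $\bm{0}$ the product equals $1$, so $Q(\bm{0})=P(\bm{0})-P(\bm{0})=0$. At any $x\in B^n\setminus\{\bm{0}\}$ some $x_i=1$, so the product vanishes, and $P(x)=0$ by hypothesis.

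Second, $\deg Q=n$, and the coefficient of $x_1\cdots x_n$ in $Q$ is $-(-1)^nP(\bm{0})\neq0$. This holds because $\deg P\le n-1$, so $P$ contributes nothing to degree $n$. Hence $x_1\cdots x_n$ is a monomial of top degree $\deg Q$ with nonzero coefficient, and each exponent equals $1<2=|S_i|$. The Combinatorial Nullstellensatz then yields a point of $B^n$ where $Q\neq0$, which is a contradiction. Therefore $\deg P\ge n$.

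No step is a genuine obstacle. The only point needing care is that the Nullstellensatz requires the chosen monomial to have degree equal to $\deg Q$; this is exactly why we assume $\deg P\le n-1$, so that no higher-degree terms of $P$ interfere. An alternative that avoids the Nullstellensatz is to multilinearize $P$ by reducing modulo $x_i^2-x_i$, which preserves values on $B^n$ and does not raise degree. One then notes that the unique multilinear interpolant of the indicator function of $\bm{0}$ on $B^n$ is $\prod_i(1-x_i)$, which has degree $n$.
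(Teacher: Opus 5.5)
Your proof is correct. The paper, however, gives no proof of this statement: it quotes it from Alon and F\"uredi as background. Within the paper, the statement is recovered as the special case $m=1$, $k=1$ of the Ball--Serra Punctured Combinatorial Nullstellensatz (Lemma~\ref{ba}), taking $S_i=\{0,1\}$ and $D_i=\{0\}$. This gives $\deg P\ge 0\cdot\max_j(|S_j|-|D_j|)+\sum_i(|S_i|-|D_i|)=n$, which is how Section~\ref{Sec2} obtains $f(mB^n,k,\bm a)\ge mn+(k-1)m$.

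Your argument is instead Alon's original one. Subtracting $P(\bm 0)\prod_i(1-x_i)$, the interpolant of $P(\bm 0)$ times the indicator of the origin, gives a polynomial vanishing on all of $B^n$. Its top monomial $x_1\cdots x_n$ survives precisely because $\deg P\le n-1$. The plain Nullstellensatz then finishes, or, as you note, uniqueness of multilinear interpolation on $B^n$ does.

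Your route is self-contained and more elementary. It also adapts verbatim to $mB^n$: subtract $P(\bm 0)\prod_{i=1}^n\prod_{j=1}^m(1-x_i/j)$, whose top monomial $(x_1\cdots x_n)^m$ has exponents $m<m+1$, and you get the $k=1$ bound $\deg P\ge mn$.

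What the Ball--Serra black box buys is uniformity in the multiplicity. It handles zeros of order $k\ge 2$ on the punctured grid in the same statement. Your subtraction trick does not address that case by itself, since it only controls values, not higher-order vanishing.
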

    Later research has focused on algebraic generalizations of Alon's Combinatorial Nullstellensatz to higher zero multiplicities \cite{Ball,Batzaya,Kos1,Kos2,Sauermann}, and geometric extensions of the Alon--F\"uredi lower bound to higher covering multiplicities \cite{Clifton,Sauermann}. Hyperplane coverings of Hamming layers and more general symmetric subsets of the Boolean cube have also been studied, both with and without multiplicity \cite{Venkitesh,GhoshKayalNandi,GhoshKayalNandiVenkitesh}. Combinatorial applications of higher-multiplicity results include Stepanov's method \cite{Hanson} and multiplicity versions of the Schwartz--Zippel lemma \cite{Bishnoi,Bukh,Dvir}.
    
Let $f(B^n,k)$ be the minimum size of a multiset of hyperplanes needed to cover every point of $B^n \setminus \{\bm{0}\}$ at least $k$ times while leaving $\bm{0}$ uncovered; repetitions are counted with multiplicity. The Alon--F\"uredi theorem gives that $f(B^n,1)=n$ and $f(B^n,2)=n+1$. Ball and Serra's Punctured Combinatorial Nullstellensatz \cite{Ball} gives the general lower bound $f(B^n,k)\ge n+k-1$. For $k\ge 4$ and $n\ge 3$, Clifton and Huang \cite{Clifton} improved this lower bound to $n+k+1$.
\begin{Theorem}[Clifton and Huang, 2020]\label{Clifton}
    For $n\ge 2$,
    \begin{equation*}
        f(B^n,3)=n+3.
    \end{equation*}
    For $k\ge 4$ and $n\ge 3$,
    \begin{equation*}
        n+k+1\le f(B^n,k)\le n+\binom{k}{2}.
    \end{equation*}
\end{Theorem}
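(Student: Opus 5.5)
The statement has three parts: the exact value $f(B^n,3)=n+3$, the lower bound $f(B^n,k)\ge n+k+1$ for $k\ge4$, and the upper bound $f(B^n,k)\le n+\binom{k}{2}$. I would prove the upper bounds by explicit constructions and the lower bounds by the polynomial method with multiplicities.

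\textbf{Upper bounds.} For $f(B^n,k)\le n+\binom{k}{2}$, start from the $n$ hyperplanes $x_i=1$. A point with $j\ge1$ nonzero coordinates is covered by exactly $j$ of them, so only points of weight $j<k$ still need $k-j$ further covers. For each $s=1,\dots,k-1$ add $k-s$ copies of the hyperplane $x_1+\cdots+x_n=s$. This misses $\bm 0$, adds $k-j$ covers to weight-$j$ points for $j<k$, and adds nothing to points of weight $j\ge k$. The number of added hyperplanes is $\sum_{s=1}^{k-1}(k-s)=\binom{k}{2}$, giving the bound. For $k=3$ this gives $n+3$, which matches the claimed exact value.

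\textbf{Lower bounds.} Take a multiset of $N$ hyperplanes $\ell_t(x)=c_t$ with $c_t\ne0$, and let $P=\prod_t(\ell_t-c_t)$. Then $P(\bm 0)\ne0$, $\deg P=N$, and $P$ vanishes to order $\ge k$ at every point of $B^n\setminus\{\bm0\}$. So it suffices to show that any such polynomial has degree $\ge n+k+1$ when $k\ge4,n\ge3$, and degree $\ge n+3$ when $k=3,n\ge2$. The baseline bound $n+k-1$ follows from Ball--Serra's punctured Nullstellensatz, and I would gain the extra $+1$ or $+2$ as follows:
\begin{enumerate}
\item \emph{Symmetrize.} Average $P$ over $S_n$, after first multiplying by a suitable factor so that the average keeps $P(\bm0)\ne0$. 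Alternatively, restrict $P$ to lines and planes through points of the cube, which reduces the problem to low-dimensional statements.
\item \emph{Exploit the derivative conditions.} Vanishing to order $k$ at $e_i$ constrains the derivatives of $P$ there. I would restrict $P$ to the line through $\bm0$ and $e_1+\cdots+e_j$. On this line the univariate polynomial $p(t)=P(t,\dots,t,0,\dots,0)$ has $p(0)\ne0$, and combining along these lines gives $\deg p\ge$ (order of vanishing at $t=1$) plus contributions from the other lattice points on the line. This alone is too weak, so I would pair it with the Alon--Füredi bound (Theorem~\ref{Alon2}) applied to suitable derivatives $\partial^\alpha P$ with $|\alpha|\le k-1$. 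Such a derivative still vanishes on $B^n\setminus\{\bm0\}$ to order $k-|\alpha|$; if it is nonzero at $\bm0$, it has degree $\ge n$, giving $\deg P\ge n+|\alpha|$.
\item \emph{Force $|\alpha|$ large.} Show that some derivative of order $k$ or $k+1$ is nonzero at $\bm0$ while its vanishing data on the cube still lets the Alon--Füredi argument apply. This uses the assumption $P(\bm0)\ne0$ together with a linear-algebra or counting argument on Taylor coefficients at $\bm0$.
\end{enumerate}

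\textbf{Main obstacle.} The hard step is step 3, gaining the $+2$ (and the $+3$ for $k=3$ beyond Ball--Serra's $n+2$). For this I would first settle the cases $n=2$ and $n=3$ by direct analysis of bivariate and trivariate polynomials with prescribed multiple zeros. I would then induct on $n$ by restricting to the facet $x_n=0$ and applying the induction hypothesis to $P|_{x_n=0}$. The facet $x_n=1$ must also be handled: after dividing out the factor $(x_n-1)$, the quotient retains multiplicity at least $k-1$ on the points of that facet.
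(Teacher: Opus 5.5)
Your upper-bound construction is correct. It is the paper's construction from Proposition \ref{prop:parameterized-upper} with $m=1$, $c=1$: a point of weight $j<k$ gets $j$ covers from the hyperplanes $x_i=1$ and $k-j$ from the copies of $x_1+\cdots+x_n=j$.

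The lower-bound half has a genuine gap, because nothing in Steps 1--3 gets past the Ball--Serra bound $n+k-1$.
\begin{itemize}
\item \emph{Step 2.} The Alon--F\"uredi theorem applies to $\partial^\alpha P$ only if $\partial^\alpha P$ vanishes on $B^n\setminus\{\bm 0\}$. The hypotheses guarantee this only for $|\alpha|\le k-1$, so Step 2 yields at most $\deg P\ge n+k-1$.
\item \emph{Step 3.} You ask for a derivative of order $k$ or $k+1$ that is nonzero at $\bm 0$ and still vanishes on the cube. Vanishing to order $k$ at a point constrains only the derivatives of order $<k$ there. Such a derivative therefore need not vanish at any point of $B^n\setminus\{\bm 0\}$, and Alon--F\"uredi simply does not apply. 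This is where the whole difficulty lies, and the proposal supplies no mechanism for it.
\item \emph{Facet induction.} Restriction gives $\deg P\ge \deg P|_{x_n=0}\ge (n-1)+k+1=n+k$, one short of what you need. ``Dividing out $x_n-1$'' presupposes $(x_n-1)\mid P$, which fails for a general $P$ and for any cover that does not contain the hyperplane $x_n=1$.
\item \emph{Base case.} Your base case $n=3$ is an infinite family in $k$, so ``direct analysis'' is not a finite task.
\end{itemize}

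What is missing is a genuinely stronger polynomial bound. The paper only cites this theorem, but its Theorem \ref{main-2} with $m=1$ and $l=0$ gives $\deg P\ge n+2k-3$ whenever $P(\bm 0)\ne0$ and $n\ge k-1$. That proof goes through three steps: the isomorphism $\psi_{m,k}$ on reduced polynomials, the injectivity of the top-degree map $\varphi_{m,k}$ (Lemma \ref{c11}), and a Lagrange-inversion computation showing that the coefficient $Y_{m,k}$ is nonzero. For $k=3$ this gives $n+3$ for all $n\ge2$, and for $k=4$ it gives $n+5$ for all $n\ge3$.

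For $k\ge5$ with $3\le n<k-1$, which lies outside the range of \eqref{eq4}, use a fact special to hyperplane multisets. Deleting one hyperplane from an almost $k$-cover leaves an almost $(k-1)$-cover, so $f(B^n,k)\ge f(B^n,4)+(k-4)\ge n+k+1$.

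Your reduction to the polynomial statement ``$\deg P\ge n+k+1$ for all $k\ge4$, $n\ge3$'' throws away this monotonicity. It therefore asks for more than the available polynomial results provide: Sauermann--Wigderson covers only $n\ge 2k-3$, and \eqref{eq4} covers only $n\ge k-1$.
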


    A new breakthrough was then made by Sauermann and Wigderson \cite{Sauermann}, who bypassed the geometric motivation and resolved the corresponding polynomial problem directly.
\begin{Theorem}[Sauermann and Wigderson, 2022]\label{Sauermann}
For $k\ge 2$ and $0\le l\le k-1$, let $P\in\mathbb{R}[x_1,\ldots,x_n]$ be a polynomial that has zeros of multiplicity at least $k$ at all points in $B^n \setminus \{\bm{0}\}$, and a zero of multiplicity exactly $l$ at $\bm{0}$. If $l=k-1$, then
        \[
        \deg P\ge n+2k-2.
        \]
     If $0\le l\le k-2$ and $n\ge 2k-3$, then
        \[
        \deg P \ge n+2k-3.
        \]
    In both cases the bound is attained for every $l$ in the stated range.
\end{Theorem}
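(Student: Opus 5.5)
We prove the Sauermann--Wigderson statement by symmetrization. Then we derive the degree bounds from a one-variable analysis combined with a derivative argument, and finally give explicit constructions for sharpness.

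\textbf{Reduction to symmetric polynomials.} The hypotheses on $P$ are invariant under permutations of the coordinates. Replacing $P$ by a generic linear combination of $P\circ\sigma$ over $\sigma\in S_n$ (with random weights, so that the order of vanishing at $\bm 0$ stays exactly $l$), we may assume $P$ is symmetric. We then restrict attention to the points of weight $j$ and to directional derivatives along them. A symmetric polynomial vanishing to order $k$ on the layer $\{x\in B^n:|x|=j\}$ for every $j\ge 1$ imposes, via the substitution $x=t\cdot v$ for $v$ a $0/1$ vector of weight $j$, that the univariate polynomial $p_j(t)=P(tv)$ vanishes to order $k$ at $t=1$. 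Moreover $p_j$ has a zero of order $\ge l$ at $t=0$, with order exactly $l$ for suitable $v$.

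\textbf{The case $l=k-1$.} Here the key step is an induction on $n$ using partial derivatives. The polynomial $Q=\partial P/\partial x_n$ restricted to $x_n=0$ vanishes to order $\ge k-1$ on $B^{n-1}\setminus\{\bm 0\}$ and to order exactly $k-2$ at $\bm 0$, for a suitable choice of coordinate. Similarly, $P|_{x_n=1}$ vanishes to order $k$ everywhere on $B^{n-1}$ except it is constrained at $\bm 0$ as well, which forces a contribution of degree. I would instead follow the Alon--F\"uredi/Ball--Serra approach: write down the coefficient of a suitable monomial, e.g.\ $x_1^{a_1}\cdots x_n^{a_n}$ with $\sum a_i=n+2k-3$, via a multiplicity version of the Combinatorial Nullstellensatz (Kós--Rónyai interpolation on the grid $\{0,1\}^n$ with multiplicities $k$ at nonzero points and $l$ at $\bm 0$). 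One shows that if $\deg P\le n+2k-3$, this top coefficient is a nonzero multiple of a derivative of $P$ at $\bm 0$ of order $l=k-1$. That derivative must be nonzero, which is a contradiction. The computation of this interpolation coefficient, a signed sum over the grid of weights from the multiplicity Lagrange basis, is routine but lengthy.

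\textbf{The case $l\le k-2$ and sharpness.} The same coefficient formula gives only $\deg P\ge n+2k-3$ once $n\ge 2k-3$. The condition $n\ge 2k-3$ is needed so that there is a monomial of degree $n+2k-4$ with all exponents at most $2k-1$ whose coefficient detects the order-$l$ jet at $\bm 0$. This is the main obstacle: choosing the right monomial and proving that the resulting combination of derivatives at $\bm 0$ cannot all vanish when the order is exactly $l$. I would try first to use symmetry to reduce the derivative data at $\bm 0$ to the power sums $p_1,\dots,p_l$, and then show the relevant linear functional is nondegenerate on the homogeneous degree-$l$ part.

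For sharpness, take products of hyperplanes. For $l=k-1$ use
\[
(x_1+\dots+x_n-1)^{k}\prod_{j=2}^{n}(x_1+\dots+x_n-j)\cdot\prod_{i}(\text{extra } k-2 \text{ factors}),
\]
adjusted so that it vanishes to order $k-1$ at $\bm 0$. For $l\le k-2$, modify the Clifton--Huang type constructions by multiplying in $l$ copies of a generic linear form through $\bm 0$.
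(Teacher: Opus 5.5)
There is a genuine gap: the decisive case $l\le k-2$ is not proved, and the steps you do commit to would fail.

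\textbf{The lower bound.} Your symmetrization is self-contradictory. The combination $\sum_\sigma c_\sigma\,P\circ\sigma$ with generic weights is not symmetric. With equal weights it is symmetric, but it can annihilate the lowest homogeneous part at $\bm 0$ (e.g.\ if that part is $x_1-x_2$). Then $l$ can jump to $\ge k-1$, or the polynomial can become $0$. So a bound proved for symmetric $P$ does not transfer to all $P$.

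Likewise, one monomial coefficient is a single linear functional. For $l\ge1$ the possible lowest-order parts at $\bm 0$ form a space of dimension $\binom{n+l-1}{l}\ge2$. Hence any such functional vanishes on some $P$ of exact order $l$.

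What is needed is a global injectivity statement. This is how the paper obtains the theorem, as the case $m=1$ of Theorem \ref{main-2}, even under the weaker hypothesis $n\ge k-1$:
\begin{itemize}
\item Replace $P$ by its reduction $P_0$ from Lemma \ref{c1}. It is nonzero, satisfies $\deg P_0\le\deg P$, and lies in the space $U_{1,k}$ of polynomials of degree at most $n+2k-3$, with no monomial divisible by $x_{i_1}^2\cdots x_{i_k}^2$, that vanish to order $k$ on $B^n\setminus\{\bm 0\}$.
\item The interpolation isomorphism of Lemma \ref{c3} gives $\dim U_{1,k}=\binom{n+k-2}{n}$.
\item Lemma \ref{c11} shows that taking the homogeneous component of degree $n+2k-3$ is injective on $U_{1,k}$. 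The dimensions match an explicit target space $W_{1,k}$. All basis elements of $W_{1,k}$ but one are reached by induction on $k$, via multiplication by $x_j(x_j-1)$ (Lemma \ref{fact2}).
\item The last basis element, $x_1\cdots x_n(x_1^{2k-3}+\cdots+x_n^{2k-3})$, is reached by the reduction of the symmetric polynomial $\prod_i(x_i-1)^k$. Its coefficient is shown nonzero through a power-sum expansion and Lagrange inversion.
\end{itemize}
Symmetry enters only through this one auxiliary polynomial, never through $P$ itself.

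Your $l=k-1$ sketch also stays at the level of intent. In the paper it follows at once from Lemma \ref{c3}. If $\deg P\le n+2k-3$, the reduction of $P$ differs from $P$ by a polynomial vanishing to order $k$ on all of $B^n$. It is therefore a nonzero reduced polynomial all of whose data recorded by $\psi_{1,k}$ vanish, which is impossible.

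\textbf{Sharpness.} Your constructions are also wrong.
\begin{itemize}
\item The $l=k-1$ product is left unspecified. The factors you do list give multiplicity only one on each layer of weight $j\ge2$, and nothing at $\bm 0$. The paper uses $x_1^{k-1}(x_1-1)^{k-1}\prod_i(x_i-1)$, of degree exactly $n+2k-2$.
\item For $l\le k-2$, a Clifton--Huang covering already uses $n+\binom{k}{2}$ hyperplanes. Adding $l$ linear forms through $\bm 0$ gives degree $n+\binom{k}{2}+l$, which exceeds $n+2k-3$ whenever $k\ge4$ or $l\ge1$. Extremal polynomials here need not be products of linear forms.
\end{itemize}
For $l\le k-2$ the paper instead reduces $x_1^{l}\prod_i(x_i-1)^k$ via Lemma \ref{c1}. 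Since $l<k-1$, the reduction keeps multiplicity exactly $l$ at $\bm 0$. By definition of reduced polynomials it has degree at most $n+2k-3$, so the lower bound forces equality.
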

    As a byproduct of Theorem \ref{Sauermann}, Sauermann and Wigderson derived the best known general lower bound, namely $f(B^n,k)\ge n+2k-3$ for $k\ge 4$ and $n\ge 2k-3$. Clifton and Huang \cite{Clifton} established the general upper bound $f(B^n,k)\le n+\binom{k}{2}$ and conjectured that it is tight if $n$ is sufficiently large with respect to $k$. It remains an interesting open problem to determine the exact value of $f(B^n,k)$ for $k\ge 4$.  

    In this paper, we consider a natural extension of their problem in which the Boolean cube is replaced by the general hypercube $mB^n=\{0,1,\ldots,m\}^n$, where $m,n$ are positive integers throughout. Let $f(mB^n,k)$ denote the minimum size of a multiset of hyperplanes that covers each point of $mB^n \setminus \{\bm{0}\}$ at least $k$ times while leaving $\bm{0}$ uncovered. Related multiplicity-covering problems have been studied for planar Cartesian grids \cite{BishnoiGrid}, subspaces over finite fields \cite{BishnoiSubspace}, triangular grids \cite{BasitCliftonHorn}, and half-grids \cite{BishnoiNene}.
    
    As a generalization of Theorem \ref{Clifton}, we obtain upper and lower
    bounds for $f(mB^n,k)$ in Theorem \ref{main-1}.  The upper bound is given
    by a family of constructions indexed by $c$. For $c=1,\ldots,k$, set
    $\rho_c=\min\{n,\lfloor(k-1)/c\rfloor\}$. Define
    \[
    U_{m,n,k}
    =m\min_{1\le c\le k}
    \left\{cn+\rho_ck-\frac{c\rho_c(\rho_c+1)}2\right\}.
    \]
     When $m=1$ and $n\ge k-1$, the choice $c=1$ gives
     $U_{1,n,k}\le n+\binom{k}{2}$, which is exactly the general upper bound
     of Clifton and Huang in Theorem \ref{Clifton}.  Since $U_{m,n,k}$ is
     defined by minimizing over all $c$, this construction can improve that
     bound.  For example, when $(m,n,k)=(1,3,4)$, the choice $c=2$ gives
     $U_{1,3,4}=8$, while the choice $c=1$ gives
     $3+\binom{4}{2}=9$.  This agrees with the exact value
     $f(B^3,4)=8$ proved by Clifton and Huang \cite{Clifton}.
\begin{Theorem}\label{main-1}
    For $n\ge 2$,
        \begin{equation*}
             f(mB^n,1)=mn\quad \text{and} \quad f(mB^n,2) = mn + m.\tag{1}  \label{eq1} 
        \end{equation*}
    For $k\ge 3$ and $n\ge k-1$,
        \begin{equation*}
        mn+(m+1)(k-1)-1\le f(mB^n,k) \le U_{m,n,k}. \tag{2} \label{eq2} 
        \end{equation*}
\end{Theorem}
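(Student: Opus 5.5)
The plan has two parts: upper bounds by explicit multisets of hyperplanes, and lower bounds by a polynomial multiplicity theorem for $mB^n$ generalizing Theorem \ref{Sauermann}. The abstract indicates the paper proves such a theorem; I would prove it first and then apply it to the product of the linear forms of a covering.

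\textbf{Upper bounds.} For each $c\in\{1,\dots,k\}$ I would use the hyperplanes $x_1+\cdots+x_n=j$.

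\begin{itemize}
\item Take the ``sum'' hyperplanes $x_1+\cdots+x_n=j$ for $j=1,\dots,mn$, each with multiplicity $c$. There are $cmn$ of them.
\item A nonzero point $x$ lies on exactly one of them, so this covers it $c$ times.
\item Points with many nonzero coordinates should receive extra coverage. I would use the coordinate-type hyperplanes $x_i=t$ for $t=1,\dots,m$; the $m$ hyperplanes for one coordinate cover exactly the points with $x_i\neq0$.
\item A point with $s$ nonzero coordinates then gets $c$ from the sum family plus extra coverage proportional to $s$.
\item Points with small support $s$ need $k-cs$ more. This suggests a layered family: for $i=1,\dots,\rho_c$, take the hyperplanes $x_1+\cdots+x_n=j$ with multiplicity $k-ci$, restricted to $j$-ranges reachable by points of support $i$.
\end{itemize}

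This should reproduce the count $m\bigl(cn+\rho_ck-c\rho_c(\rho_c+1)/2\bigr)$. Checking that the count matches and that $\bm 0$ is never covered is bookkeeping.

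The exact values for $k=1,2$ follow from the same idea.

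\begin{itemize}
\item For $k=1$, the $mn$ sum hyperplanes alone suffice.
\item For $k=2$, add the $m$ hyperplanes $x_1+\cdots+x_n=j$, $j=1,\dots,m$, to obtain $mn+m$. Then check that the remaining points are doubly covered, with $n\ge2$ used for the points of support one. If this fails, replace these hyperplanes with $x_1=t$ together with the sum family.
\end{itemize}

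\textbf{Lower bounds.} Given a covering, let $P$ be the product of its linear forms. Then $P(\bm0)\neq0$, $P$ vanishes to order $\ge k$ on $mB^n\setminus\{\bm0\}$, and $\deg P$ equals the size of the covering. The needed polynomial statement is $\deg P\ge mn+(m+1)(k-1)-1$ for $k\ge3$, with the sharper forms $mn$ and $mn+m$ when $k=1,2$.

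\begin{itemize}
\item For $k=1$ this is the Alon--Füredi argument on the grid $\{0,\dots,m\}^n$. Suppose $\deg P<mn$. Then $Q=P\cdot\prod_i\prod_{t=1}^m(x_i-t)/(-t)^{\,}$ has degree less than $2mn$, yet $Q-P(\bm0)\prod_i\prod_{t=1}^m(t-x_i)/t$ vanishes on the grid. Compare coefficients of $\prod x_i^{m}$ via the Combinatorial Nullstellensatz, or equivalently apply the interpolation identity $\sum_{a\in mB^n}P(a)\prod_i w_{a_i}=0$ with weights dual to degree $<m$ in each variable.
\item For multiplicity, follow Sauermann--Wigderson and reduce to a statement about symmetric polynomials. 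Average $P$ over the symmetric group, and restrict to lines through $\bm0$ and points of $mB^n$.
\item The key one-variable fact: along the direction $(1,\dots,1)$, the univariate polynomial $p(t)=P(t,\dots,t)$ vanishes to order $\ge k$ at $t=1,\dots,m$, and $p(0)\neq0$. This gives only $mk$, which is too weak.
\item Instead, use derivative/interpolation counting. Each coordinate direction contributes $m$ zeros, giving the $mn$ term. The extra $(m+1)(k-1)-1$ comes from $k-1$ further orders of vanishing at each of the $m+1$ values $0,\dots,m$ along one coordinate. The $-1$ is there because at $0$ the point $\bm0$ itself is excluded.
\item Concretely, induct on $n$. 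Write $P=\sum_j x_n^{j}P_j(x_1,\dots,x_{n-1})$ and consider $\partial^{r}P/\partial x_n^{r}$ restricted to $x_n=t$ for $r<k$. Apply the $(n-1)$-dimensional statement to the appropriate combination, and the univariate bound in $x_n$ with $k$-fold zeros at $1,\dots,m$ and $(k-1)$-fold information at $0$.
\end{itemize}

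The main obstacle is this multiplicity degree bound, namely obtaining the $+(m+1)(k-1)-1$ term. I would first try the Sauermann--Wigderson approach of passing to $\partial$-derivatives and using the Ball--Serra punctured Nullstellensatz on the grid, which already gives $mn+k-1$. I would then improve this by exploiting vanishing at all $m+1$ layers of one coordinate, using the hypothesis $n\ge k-1$ to find enough free coordinates for the induction.
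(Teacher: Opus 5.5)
\textbf{Lower bound for $k\ge3$.} This is the real content of the theorem, and your proposal lists strategies but gives no argument for $\deg P\ge mn+(m+1)(k-1)-1$. The induction on $n$ via $\partial^rP/\partial x_n^r$ has no stated inductive hypothesis. Moreover, the bound is false for small $n$: for $n=1$, the polynomial $\prod_{t=1}^m(x_1-t)^k$ has degree $mk<m+(m+1)(k-1)-1$ when $k\ge3$. So the hypothesis $n\ge k-1$ must enter essentially, and your sketch does not say how.

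Note also that Lemma~\ref{ba} with $S_i=\{0,\ldots,m\}$, $D_i=\{0\}$ gives $mn+m(k-1)$, not $mn+k-1$. This settles the lower bounds for $k=1,2$ at once, which is how the paper does it, but leaves a gap of exactly $k-2$ for $k\ge3$.

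The paper closes that gap by linear algebra. Such a $P$ reduces to a nonzero $(m,k)$-reduced $P_0$ with $\deg P_0\le\deg P$ lying in $U_{m,k}$. This space has dimension $M_{k-1}(n)$ via the interpolation isomorphism $\psi_{m,k}$. Taking the homogeneous component of degree $mn+(m+1)(k-1)-1$ is injective on $U_{m,k}$ (Lemma~\ref{c11}). That injectivity is proved by induction on $k$: multiplying by $x_j(x_j-1)\cdots(x_j-m)$ reaches every basis element of $W_{m,k}$ except $(x_1\cdots x_n)^m p_{(m+1)(k-1)-1}(\bm x)$. The last element is reached by one symmetric witness, the reduction of $\prod_i\bigl((x_i-1)^{\underline m}\bigr)^k$. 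Its coefficient on that element is a nonzero multiple of $[x^{k-2}]Q_m(x)^{-(k-1)}$, by Lagrange inversion. Nothing in your proposal plays the role of this top-degree injectivity.

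\textbf{Upper bounds.} The constructions are also wrong as written. Taking $c$ copies of every hyperplane $x_1+\cdots+x_n=j$, $1\le j\le mn$, covers each nonzero point exactly $c$ times regardless of its support. Your own bookkeeping (``support $s$ needs $k-cs$ more'') presupposes coverage $cs$. That coverage comes from $c$ copies of each coordinate hyperplane $x_i=t$, $1\le t\le m$, which is what produces the $cmn$ term.

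The sum hyperplanes must then be layered by $h=\lceil u/m\rceil$: take $k-ch$ copies of $x_1+\cdots+x_n=u$ for $(h-1)m<u\le hm$ and $1\le h\le\rho_c$. A point with $q$ nonzero coordinates and coordinate sum $u$ has $h\le q$, so it is covered $k+c(q-h)\ge k$ times. If $h>\rho_c$, then $\rho_c<n$, and this gives $cq\ge c(\rho_c+1)\ge k$. Your overlapping ranges ``reachable by support $i$'' do not give the count $m\bigl(cn+\rho_ck-c\rho_c(\rho_c+1)/2\bigr)$.

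For $k=2$, both of your options fail. With the full sum family plus $x_1+\cdots+x_n=j$ for $j\le m$, the point $(m,1,0,\ldots,0)$ is covered only once. With the sum family plus $x_1=t$, the point $(0,1,0,\ldots,0)$ is covered only once. What works is the $mn$ coordinate hyperplanes plus $x_1+\cdots+x_n=s$ for $1\le s\le m$. Your $k=1$ construction using the $mn$ sum hyperplanes is correct.
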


The equalities in \eqref{eq1} are established by explicit hyperplane
constructions and matching lower bounds. Proposition
\ref{prop:parameterized-upper} gives the upper bound
$f(mB^n,k)\le U_{m,n,k}$, which in fact holds for all positive
$m,n,k$. For $k\ge3$ and $n\ge k-1$, Proposition
\ref{prop:c-one-threshold} shows that the choice $c=1$ is optimal if and only if
$n\ge\lfloor k^2/4\rfloor$. The lower bound in \eqref{eq2} follows
from Theorem \ref{main-2}, which extends Theorem \ref{Sauermann} from
$B^n$ to $mB^n$.

The equalities $f(mB^n,k)=mf(B^n,k)$ for $k=1,2$, together with the inequality $f(mB^n,3)\le mf(B^n,3)$, suggest that $f(mB^n,k)$ may satisfy a subadditivity property in the parameter $m$.
\begin{con}[Subadditivity]
	For $n\ge 2$, $k\ge 3$, and positive integers $m_1,m_2$,
	\[
	f\bigl((m_1+m_2)B^n,k\bigr)\le f(m_1B^n,k)+f(m_2B^n,k).
	\]
\end{con}
\begin{remark}
Simply combining two translated covers fails on points with coordinates in both subcubes. For example, when $n=2$ and $k=3$, the five hyperplanes
\[
x_1=1,\quad x_2=1,\quad
x_1+x_2=1\ \text{(twice)},\quad x_1+x_2=2
\]
cover $B^2\setminus\{\bm{0}\}$ three times, but their union with its translate by $(1,1)$ covers $(2,0)$ only twice. Moreover, because $U_{m,n,k}$ is linear in $m$, the present construction gives only
\[
f((m_1+m_2)B^n,k)\le U_{m_1,n,k}+U_{m_2,n,k},
\]
which does not imply the conjecture because $f(m_iB^n,k)$ may be strictly smaller than $U_{m_i,n,k}$.
\end{remark}
Theorem \ref{main-1} gives $2n+5\le f(2B^n,3)\le 2n+6$. By computer-aided verification (see Appendix \ref{appendix} for details), we obtain $f(2B^n,3)=2n+5$ for $n=2,3,4$. Therefore, we propose the following conjecture.
\begin{con}  
	For $n\ge 5$, $f(2B^n,3)=2n+5$.
\end{con}
Most of this paper, including Sections \ref{Sec3}--\ref{Sec7}, is devoted to the proof of the following theorem, an extension of the Sauermann--Wigderson Combinatorial Nullstellensatz from $B^n$
to $mB^n$. During the proof, we observe that the dimensional hypothesis in the
second case can be improved from $n\ge2k-3$ to $n\ge k-1$.
\begin{Theorem}\label{main-2}
    For $k\ge 2$ and $0\le l\le k-1$, let $P\in\mathbb{R}[x_1,\ldots,x_n]$ be a polynomial that has zeros of multiplicity at least $k$ at all points in $mB^n \setminus \{\bm{0}\}$, and a zero of multiplicity exactly $l$ at $\bm{0}$. If $l=k-1$, then
    \begin{equation*}
    \deg P\ge mn+(m+1)(k-1). \tag{3} \label{eq3}
     \end{equation*}
    If $0\le l\le k-2$ and $n\ge k-1$, then
    \begin{equation*}
    \deg P \ge mn+(m+1)(k-1)-1. \tag{4}\label{eq4}
     \end{equation*}
    In both cases the bound is attained for every $l$ in the stated range.
\end{Theorem}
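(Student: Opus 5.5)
We will follow the strategy of Sauermann and Wigderson (Theorem \ref{Sauermann}) and reduce the multivariate problem to a univariate one by symmetrization.

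\textbf{Step 1: reduction to the symmetrized polynomial.} Given $P$, average it over the action of $S_n$ permuting the coordinates. This does not increase the degree, preserves vanishing to order $k$ on $mB^n\setminus\{\bm 0\}$, and preserves exact multiplicity $l$ at $\bm 0$ provided some lowest-order homogeneous part survives averaging. If it does not, we first apply a generic linear change preserving the $l$-th order Taylor part, or replace $P$ by $P\cdot$(a suitable monomial shift). Next, take iterated directional derivatives along $\mathbf 1=(1,\dots,1)$, restricted to the diagonal $x_1=\dots=x_n=t$, together with mixed derivatives. These produce univariate polynomials $g_j(t)$. Each $g_j$ must vanish to controlled order at every point $t\in\{1/n,2/n,\dots,m\}$ that is the average of a lattice point of $mB^n$. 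More precisely, for each $s=1,\dots,mn$ we will use the hyperplane $x_1+\dots+x_n=s$. Restricted to this hyperplane, a derivative of $P$ of order $j<k$ vanishes to order $k-j$ at every lattice point of $mB^n$ with coordinate sum $s$.

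\textbf{Step 2: counting zeros.} Consider the polynomial $Q(s)$, in one variable, obtained by evaluating suitable symmetric combinations of $\partial^{\alpha}P$ (with $|\alpha|=l$) on layers of coordinate sum $s$. Alon--F\"uredi style counting then gives the bound. $Q$ has zeros of order $\ge k-l$ at $s=1,\dots,mn$. It also has extra vanishing near the ends $s\in\{1,\dots,m\}$ and $s\in\{mn-m+1,\dots,mn\}$, because those layers contain few points (e.g.\ $s=1$ consists only of the $e_i$). The key gain of $(m+1)(k-1)$ over the naive bound comes from two sources. First, the layers $s=1,\dots,m$ along a single axis $\{te_1\}$ form a univariate problem, and the restriction $P(te_1)$ has zeros of order $k$ at $t=1,\dots,m$ and order $\ge l$ at $0$. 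Second, the transversal directions supply $n-1$ additional degrees. We plan to run induction on $n$. The base case $n=1$ is immediate: a univariate polynomial with zeros of order $k$ at $1,\dots,m$ and exact order $k-1$ at $0$ has degree $\ge mk+k-1=m+(m+1)(k-1)$. For the inductive step, write $P=\sum_i x_n^i P_i$, or substitute $x_n=t$ for $t\in\{0,\dots,m\}$. Each slice $x_n=t$ with $t\ge1$ is fully vanishing of order $k$ on $mB^{n-1}$, and the slice $x_n=0$ is a smaller instance. Degree then accumulates by $m$ per dimension, via the factor $\prod_{t=1}^m(x_n-t)$ in a division argument. Concretely, divide $P$ by $\prod_{t=1}^m (x_n-t)^{k'}$ with the appropriate order of vanishing in the $x_n$ direction and pass to the remainder on the slice.

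\textbf{Step 3: the case $l\le k-2$.} Here we lose one. We follow Sauermann--Wigderson's argument: if the leading bound failed, a degree-$(mn+(m+1)(k-1)-2)$ polynomial would exist, and its top homogeneous part would have to be divisible by too many linear forms. Using $n\ge k-1$, we get enough independent directions to derive a contradiction via a Vandermonde-type rank argument in $k-1$ variables. This explains why $n\ge k-1$, rather than $2k-3$, should suffice.

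\textbf{Step 4: sharpness.} For $l=k-1$, take
\[
P=\Bigl(\prod_{i=1}^n\prod_{t=1}^m(x_i-t)\Bigr)\cdot\Bigl(\prod_{t=1}^{m}\bigl(x_1+\dots+x_n-t\bigr)\Bigr)^{k-1}\cdot\ell^{k-1},
\]
suitably adjusted: product-of-coordinate factors together with sum-layer factors, with degrees matched to $mn+(m+1)(k-1)$. For $l\le k-2$, we will modify the Sauermann--Wigderson examples by replacing each factor $(x_i-1)$ with $\prod_{t=1}^m(x_i-t)$ and each hyperplane $\sum x_i=1$ with the layers $\sum x_i=t$.

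\textbf{Main obstacle.} We expect the main obstacle to be Step 2/3: getting the precise $(m+1)(k-1)$ gain and the $-1$ loss for $l\le k-2$. Verifying the exact-multiplicity condition in the constructions is also delicate.
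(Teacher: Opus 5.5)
Apart from the correct base case $n=1$, your proposal does not contain a working argument for either lower bound.

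\textbf{Steps 1--2.} The diagonal points $(s/n,\ldots,s/n)$ are not points of $mB^n$, so nothing forces the univariate $g_j(t)$ to vanish there, even when $P$ is symmetric. Restricting $P$ to the layer $x_1+\cdots+x_n=s$ gives an $(n-1)$-variate polynomial, not the value of a univariate $Q(s)$, so Step~2's counting has no object. Symmetrizing an arbitrary $P$ is also unsafe. For $k\ge3$, $P=(x_1-x_2)\prod_{i=1}^n\prod_{t=1}^m(x_i-t)^k$ satisfies the hypotheses of \eqref{eq4} with $l=1$, yet its $S_n$-average is identically zero. Your fixes do not help: a linear change of variables does not preserve $mB^n$, and multiplying by a monomial raises both the degree and the order at $\bm 0$.

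\textbf{Induction on $n$.} It breaks exactly where the gain must come from. If the lowest-order form of $P$ at $\bm 0$ is divisible by $x_n$, then $P(x_1,\ldots,x_{n-1},0)$ vanishes to order at least $k$ on all of $mB^{n-1}$ and carries no information. Passing to $\partial_n^jP|_{x_n=0}$ and inducting with $k-j$ yields only $\deg P\ge mn+(m+1)(k-1)-m(j+1)$. ``Divide by $\prod_t(x_n-t)^{k'}$ and pass to the remainder'' does not say how the slices $x_n=1,\ldots,m$ recover the missing $m(j+1)$. Step~3 is likewise only a slogan: you give no reason why the top homogeneous part of a counterexample should be divisible by many linear forms, and no rank argument is specified.

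\textbf{The missing idea is linear-algebraic.} For \eqref{eq3}, the paper reduces $P$ modulo the ideal generated by the products $\prod_{j=1}^{k}x_{i_j}(x_{i_j}-1)\cdots(x_{i_j}-m)$ to an $(m,k)$-reduced polynomial $P_0$. The space $V_{m,k}$ of such polynomials has dimension exactly the number $N$ of derivative conditions, so the evaluation map $\psi_{m,k}$ is an isomorphism. If $\deg P\le mn+(m+1)(k-1)-1$, then $P-P_0$ vanishes to order $k$ on all of $mB^n$. So $P_0$ inherits the exact multiplicity $k-1$ at $\bm 0$, is nonzero, and lies in $\ker\psi_{m,k}$, a contradiction.

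For \eqref{eq4}, the real content is that sending $P_0\in U_{m,k}$ to its homogeneous component of degree $mn+(m+1)(k-1)-1$ is injective when $n\ge k-1$. This rests on three ingredients:
\begin{itemize}
\item $\dim U_{m,k}=\dim W_{m,k}=M_{k-1}(n)$, where $n\ge k-1$ enters through the linear independence of the spanning polynomials of $W_{m,k}$;
\item an induction on $k$ via multiplication by $x_j(x_j-1)\cdots(x_j-m)$;
\item one explicit symmetric element whose top component has a nonzero coefficient on $(x_1\cdots x_n)^m p_{(m+1)(k-1)-1}$, computed by Lagrange inversion.
\end{itemize}
Nothing in Step~3 substitutes for this.

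\textbf{Sharpness examples.} Your example for $l=k-1$ fails as written. For $m=1$, $n=3$, $k=3$, the points $e_1+e_2$, $e_1+e_3$, $e_2+e_3$ receive multiplicity only $2$ from the first two factors, so $\ell$ must vanish at these three linearly independent points. Since $\ell(\bm 0)=0$ is needed for multiplicity $k-1$ at $\bm 0$, this forces $\ell\equiv0$. A correct example is $x_1^{k-1}\prod_{t=1}^m(x_1-t)^{k-1}\prod_{i=1}^n\prod_{t=1}^m(x_i-t)$. For $l\le k-2$ your substitution recipe is unverified. The paper instead reduces $x_1^{l}\prod_{i=1}^n\prod_{t=1}^m(x_i-t)^k$ to its $(m,k)$-reduced representative, which keeps the multiplicity conditions and has degree at most $mn+(m+1)(k-1)-1$.
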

\begin{coro}\label{coro1.8}
	For $k\ge 2$, the minimum size of a multiset of hyperplanes that covers every point of $mB^n\setminus \{\bm{0}\}$ at least $k$ times and covers $\bm{0}$ exactly $k-1$ times is
	\[
	mn+(m+1)(k-1).
	\]
\end{coro}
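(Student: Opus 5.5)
The plan is to prove the two inequalities separately: the lower bound by turning a hyperplane cover into a polynomial and applying \eqref{eq3} of Theorem \ref{main-2}, and the upper bound by an explicit multiset of hyperplanes.

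\emph{Lower bound.} Let $H_1,\ldots,H_N$ be a multiset of hyperplanes with the required covering property. Write $H_i=\{x:\ \ell_i(x)=0\}$ with $\ell_i$ an affine linear form, and set $P=\prod_{i=1}^N \ell_i$, so that $\deg P=N$. At any point $a\in\mathbb{R}^n$, each factor with $\ell_i(a)=0$ has a simple zero at $a$, while the other factors are nonzero there. Hence the multiplicity of $P$ at $a$ equals the number of indices $i$ with $a\in H_i$. Consequently $P$ vanishes to order at least $k$ at every point of $mB^n\setminus\{\bm 0\}$ and to order exactly $k-1$ at $\bm 0$. This is the case $l=k-1$ of Theorem \ref{main-2}, which carries no dimensional hypothesis. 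Inequality \eqref{eq3} therefore gives $N=\deg P\ge mn+(m+1)(k-1)$.

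\emph{Upper bound.} The construction consists of two families.
\begin{itemize}
\item The $mn$ hyperplanes $x_1+\cdots+x_n=j$ for $j=1,\ldots,mn$. Every $x\in mB^n\setminus\{\bm 0\}$ has coordinate sum in $\{1,\ldots,mn\}$, so it lies on exactly one of these. The origin has coordinate sum $0$, so it lies on none.
\item The hyperplanes $x_1=j$ for $j=0,1,\ldots,m$, each taken with multiplicity $k-1$, giving $(m+1)(k-1)$ hyperplanes. Every point of $mB^n$ has $x_1\in\{0,\ldots,m\}$, so it lies on exactly $k-1$ of these.
\end{itemize}
Every nonzero point of $mB^n$ is then covered exactly $k$ times, and $\bm 0$ is covered exactly $k-1$ times. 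The total is $mn+(m+1)(k-1)$ hyperplanes, matching the lower bound.

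All the difficulty is contained in Theorem \ref{main-2} itself. The only point to check in the corollary is that the multiplicity of a product of affine forms at a point equals the number of factors vanishing there, which holds because the lowest-degree homogeneous part of $P$ at $a$ is the product of the nonzero linear parts of the vanishing factors.
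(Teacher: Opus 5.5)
Your proof is correct and follows the paper's argument: the lower bound comes from the product of the defining affine forms together with the case $l=k-1$ of Theorem~\ref{main-2}, which needs no hypothesis on $n$, and the upper bound comes from an explicit cover. The only difference is the cover: the paper reads it off the extremal polynomial $x_1^{k-1}(x_1-1)^{k-1}\cdots(x_1-m)^{k-1}\prod_{i=1}^{n}(x_i-1)\cdots(x_i-m)$, so it uses the coordinate hyperplanes $x_i=j$ ($1\le i\le n$, $1\le j\le m$) where you use the level hyperplanes $x_1+\cdots+x_n=j$ ($1\le j\le mn$); both families have $mn$ members and both work, and yours covers every nonzero point exactly $k$ times.
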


We conclude the introduction with the organization and logical structure of
the proofs. Section \ref{Sec2} proves Theorem \ref{main-1}: the upper bounds
come from explicit hyperplane constructions, while the lower bound in
\eqref{eq2} follows by associating a product polynomial with a covering and
applying Theorem \ref{main-2}. Section \ref{Sec3} proves Theorem
\ref{main-2}: it derives \eqref{eq3} using the reduction to
$(m,k)$-reduced polynomials and reduces \eqref{eq4} to the injectivity
of the highest-degree component map in Lemma \ref{c11}.
Section
\ref{Sec4} proves Lemma \ref{c11} using Lemma \ref{p1}. Section
\ref{Sec5} constructs a symmetric reduced polynomial and reduces
Lemma \ref{p1} to Claims \ref{l33} and \ref{l34}.
Sections \ref{Sec6} and \ref{Sec7} prove these two claims, respectively; the
coefficient evaluation in Section \ref{Sec7} uses the Lagrange inversion
formula. Appendix
\ref{appendix} gives the explicit constructions for the stated small
cases of $f(2B^n,3)$.

\section{Covering Bounds and Constructions} \label{Sec2}
 This section is devoted to proving Theorem \ref{main-1}, whose proof relies on Ball and Serra's Punctured Combinatorial Nullstellensatz \cite[Theorem 4.1]{Ball}; see also the subsequent erratum \cite{BallErratum}. We begin by recalling their result after introducing some necessary notation. Let $\mathbb{F}$ be a field and let $f$ be a nonzero polynomial in $\mathbb{F}[x_1,\ldots,x_n]$. We say that $\bm a=(a_1,\ldots,a_n)\in\mathbb{F}^n$ is a \emph{zero of multiplicity $k$} of $f$ if $k$ is the minimum degree of a monomial with nonzero coefficient in the expansion of $f(x_1+a_1,\ldots,x_n+a_n)$. For $i = 1,\ldots,n$, let $D_i$ and $S_i$ be finite nonempty subsets of $\mathbb{F}$ such that $D_i \subseteq S_i$, and define
    \[
    g_i(x_1,\ldots,x_n) = \prod_{s \in S_i} (x_i - s) \quad \text{and} \quad h_i(x_1,\ldots,x_n) = \prod_{d \in D_i} (x_i - d).
    \]
    For $k\ge1$, let $T(n,k)$ be the set of all nondecreasing $k$-tuples with entries in $\{1,\ldots,n\}$. For $\tau\in T(n,k)$, write $\tau(r)$ for its $r$-th entry.

\begin{Lemma}[Ball and Serra, 2009]\label{ba}
    If $f$ has a zero of multiplicity at least $k$ at all common zeros of $g_1, g_2, \ldots, g_n$, except for at least one point in $D_1 \times D_2 \times \cdots \times D_n$ where it has a zero of multiplicity less than $k$, then there exist polynomials $u_{\tau}$ for all $\tau\in T(n,k)$ satisfying $\deg(u_{\tau}) \le \deg(f) - \sum_{r=1}^{k} \deg(g_{\tau(r)})$ and a nonzero polynomial $v$ satisfying $\deg(v) \le \deg(f) - \sum_{i=1}^{n} \bigl(\deg(g_i) - \deg(h_i)\bigr)$ such that
        \[
        f = \sum_{\tau \in T(n,k)} g_{\tau(1)} \cdots g_{\tau(k)} u_{\tau} + v \prod_{i=1}^{n} \frac{g_i}{h_i}.
        \]
        Moreover, if there exists a point in $D_1 \times D_2 \times \cdots \times D_n$ where $f$ is nonzero, then
        \[
        \deg(f) \ge (k-1) \max_{j} (|S_j| - |D_j|) + \sum_{i=1}^{n} (|S_i| - |D_i|).
        \]
\end{Lemma}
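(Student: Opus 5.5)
The plan is to prove the decomposition first by a division argument adapted to multiplicities, and then to derive the degree bound from it.

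\emph{Step 1 (reduction).} Every polynomial can be written uniquely as $\sum_{\bm a\in\mathbb{N}^n} g_1^{a_1}\cdots g_n^{a_n}\, r_{\bm a}$ with $\deg_{x_i} r_{\bm a}<|S_i|$. This comes from iterated division of each variable by the monic univariate polynomial $g_i$. Moreover, the expansion can be chosen degree-compatibly, so that $\deg(g^{\bm a}r_{\bm a})\le\deg f$. All terms with $|\bm a|\ge k$ are divisible by some product $g_{\tau(1)}\cdots g_{\tau(k)}$ with $\tau\in T(n,k)$, and these terms give the $u_\tau$ with the required degree bounds. Hence $f=\sum_\tau g_{\tau(1)}\cdots g_{\tau(k)}u_\tau+r$, where $r=\sum_{|\bm a|<k}g^{\bm a}r_{\bm a}$ and $\deg r\le\deg f$. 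Each product $g_{\tau(1)}\cdots g_{\tau(k)}$ vanishes to order $\ge k$ on the whole grid $S_1\times\cdots\times S_n$. Therefore $r$ inherits the vanishing conditions of $f$: it has order $\ge k$ at every grid point outside $D_1\times\cdots\times D_n$.

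\emph{Step 2 (divisibility of $r$ by $\prod g_i/h_i$).} The key computation is local. At a grid point $\bm s$, each $g_i$ has a simple zero at $s_i$, so the lowest-order part of $g^{\bm a}$ at $\bm s$ is $\prod_i (g_i'(s_i)(x_i-s_i))^{a_i}$. Taking Hasse derivatives of order $\bm b$ with $|\bm b|<k$ at $\bm s$ gives a triangular system in the values of the derivatives of the $r_{\bm a}$ at $\bm s$, ordered by $|\bm a|$. Induction on $|\bm a|$ then shows that every $r_{\bm a}$ with $|\bm a|<k$ vanishes at each grid point $\bm s\notin D_1\times\cdots\times D_n$.

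Now fix $j$ and a value $s\in S_j\setminus D_j$. The restriction $r_{\bm a}(x_1,\ldots,x_{j-1},s,x_{j+1},\ldots,x_n)$ has $\deg_{x_i}<|S_i|$ for every $i$ and vanishes on all of $\prod_{i\neq j}S_i$, so it is identically zero. Hence $(x_j-s)\mid r_{\bm a}$. Doing this for all such $s$ and all $j$ shows that $\prod_i g_i/h_i$ divides each $r_{\bm a}$, and hence divides $r$. We set $v=r/\prod_i(g_i/h_i)$, which gives $\deg v\le\deg f-\sum_i(\deg g_i-\deg h_i)$.

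Nonvanishing of $v$ follows from the hypothesis. Let $\bm d\in D_1\times\cdots\times D_n$ be a point where $f$ has order $<k$. If $v=0$, then $f$ would lie in the span of the products $g_{\tau(1)}\cdots g_{\tau(k)}$ and would vanish to order $k$ at $\bm d$, a contradiction.

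\emph{Step 3 (degree bound).} Assume now that $f(\bm d)\neq0$ for some $\bm d\in D_1\times\cdots\times D_n$. Choose $j$ maximizing $|S_j|-|D_j|$. I would restrict to the line $L=\{\bm d+te_j\}$. On $L$, the univariate polynomial $f|_L$ vanishes to order $\ge k$ at the $|S_j|-|D_j|$ points with $t+d_j\in S_j\setminus D_j$, and it is nonzero at $t=0$. The missing step is to combine this univariate count, which gives $k(|S_j|-|D_j|)$, with the contribution $\sum_{i\ne j}(|S_i|-|D_i|)$ coming from the factors $g_i/h_i$ for $i\neq j$.

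The approach I would try first is to apply the decomposition to $f$ with the sets $D'_i=D_i$ for $i\ne j$. Then show that $v$, restricted to the hyperplanes $x_i=d_i$, must still vanish to order $k-1$ at the points with $x_j\in S_j\setminus D_j$. The factor $g_j/h_j$ absorbs only one order of vanishing there, and the terms $g_\tau u_\tau$ with some $\tau(r)\ne j$ vanish on $L$ because $g_i(d_i)=0$. This would give $\deg v\ge (k-1)(|S_j|-|D_j|)$ via the univariate bound along $L$, using $v(\bm d)\neq0$. Adding the degree of $\prod_i g_i/h_i$ then yields the claim. Making this restriction argument rigorous, i.e.\ controlling the terms $g_j^k u_{(j,\ldots,j)}$ along $L$, is the step I expect to be the main obstacle.
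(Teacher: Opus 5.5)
The paper does not prove this lemma; it quotes it from Ball and Serra. So your argument has to stand on its own. Step 1 is fine, but Step 2 contains a false claim, and Step 3 is left unfinished although it closes easily.

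\textbf{Step 2.} You claim that every $r_{\bm a}$ with $|\bm a|<k$ vanishes at each grid point outside $D_1\times\cdots\times D_n$, and that $\prod_i g_i/h_i$ divides each $r_{\bm a}$. Both claims are false. Take $n=1$, $S_1=\{0,1\}$, $D_1=\{0\}$, $k=2$ and $f=(x_1-1)^2$. The hypotheses hold, and with $g_1=x_1^2-x_1$ you get $f=g_1\cdot 1+(1-x_1)$. So $r_1=1$, which does not vanish at the grid point $1$.

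The triangular induction already fails at level one. With $\bm a$ the $i$-th unit vector, the first-order Taylor coefficient of $r$ at $\bm s$ gives $g_i'(s_i)\,r_{\bm a}(\bm s)=-\frac{\partial r_{\bm 0}}{\partial x_i}(\bm s)$. You only know $r_{\bm 0}(\bm s)=0$ at points outside $D$, not that $r_{\bm 0}\equiv 0$, so this derivative need not vanish.

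The repair is to restrict $r$ itself rather than its coefficients. For $s\in S_j\setminus D_j$,
\[
r|_{x_j=s}=\sum_{|\bm a|<k,\ a_j=0}\ \prod_{i\ne j}g_i^{a_i}\cdot r_{\bm a}|_{x_j=s}.
\]
This is a reduced expansion with respect to $(g_i)_{i\ne j}$. It has multiplicity at least $k$ at every point of the \emph{full} grid $\prod_{i\ne j}S_i$, because all these points lie outside $D_1\times\cdots\times D_n$ and restriction does not lower multiplicity.

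On a full grid your induction does work. Once the coefficients of level $<\ell$ vanish at every grid point, they vanish identically by reducedness. Then the degree-$\ell$ Taylor part at each grid point is exactly $\sum_{|\bm a|=\ell}r_{\bm a}(\bm s)\prod_i\bigl(g_i'(s_i)(x_i-s_i)\bigr)^{a_i}$, which forces $r_{\bm a}(\bm s)=0$. Hence $r|_{x_j=s}=0$, so $(x_j-s)\mid r$, and therefore $\prod_i g_i/h_i$ divides $r$. The rest of Step 2 ($\deg v$ and $v\ne 0$) then goes through as you wrote it.

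\textbf{Step 3.} There is no real obstacle, and you do not need to change the sets $D_i$; just use the decomposition from Step 2. On the line $L$ through $\bm d$ parallel to the $x_j$-axis, every $g_i$ with $i\ne j$ vanishes identically. So
\[
f|_L=g_j^k\,u_{(j,\ldots,j)}|_L+c\,(g_j/h_j)\,v|_L,\qquad c=\prod_{i\ne j}(g_i/h_i)(d_i)\ne 0.
\]
The first term has multiplicity at least $k$ at every point of $S_j$, since $g_j^k$ does. Also $f|_L$ has multiplicity at least $k$ at the points of $S_j\setminus D_j$. Since $g_j/h_j$ has only simple zeros, $v|_L$ has multiplicity at least $k-1$ at these $|S_j|-|D_j|$ points.

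Evaluating the decomposition at $\bm d$ gives $f(\bm d)=c\,(g_j/h_j)(d_j)\,v(\bm d)$, because every $g_i(d_i)=0$. Hence $v(\bm d)\ne 0$ and $v|_L\ne 0$. So $\deg v\ge (k-1)(|S_j|-|D_j|)$. Combining this with $\deg v\le \deg f-\sum_i(|S_i|-|D_i|)$ and maximizing over $j$ gives the stated bound.
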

    For any point $\bm{a}\in mB^n$, denote by $f(mB^n,k,\bm{a})$ the minimum size of a multiset of hyperplanes that covers each point of $mB^n \setminus \{\bm a\}$ at least $k$ times with $\bm a$ left uncovered. Clearly, $f(mB^n,k,\bm0)=f(mB^n,k)$. Below we show that the identities in \eqref{eq1} of Theorem \ref{main-1} hold not only for $f(mB^n,k)$, but also for $f(mB^n,k,\bm a)$ when $k=1,2$.

    Let $\mathcal H$ be any such covering multiset. Choose a defining linear polynomial $L_H$ for each $H\in\mathcal H$, and set
    $P=\prod_{H\in\mathcal H}L_H$. Then $\deg P=|\mathcal H|$. Every
    point covered exactly $r$ times by $\mathcal H$ is a zero of
    multiplicity $r$ of $P$. Hence $P(\bm a)\ne0$, and $P$ has zeros of
    multiplicity at least $k$ at all points in
    $mB^n\setminus\{\bm a\}$. Applying Lemma \ref{ba} with $S_i=\{0,1,\ldots,m\}$ and $D_i=\{a_i\}$ gives
    \[f(mB^n,k,\bm{a}) \ge mn + (k-1)m.\] 
    Thus, for $k=1,2$, the equality $f(mB^n,k,\bm{a})= mn + (k-1)m$ holds if and only if there exists a family of $mn + (k-1)m$ hyperplanes that cover each point of $mB^n \setminus \{\bm a\}$ at least $k$ times with $\bm{a}$ left uncovered.
\begin{Theorem}\label{2-cover}
    Let $n\ge 2$. For any $\bm{a} \in mB^n$,
    \begin{equation*}
       f(mB^n,1,\bm{a})=mn\quad \text{and} \quad  f(mB^n,2,\bm{a}) = mn + m.
    \end{equation*}
\end{Theorem}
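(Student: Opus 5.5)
The lower bounds $f(mB^n,k,\bm a)\ge mn+(k-1)m$ for $k=1,2$ were already obtained above from Lemma \ref{ba}, with $S_i=\{0,1,\ldots,m\}$ and $D_i=\{a_i\}$. So the plan is to give explicit coverings of sizes $mn$ and $mn+m$ that avoid $\bm a$. I treat $k=1$ first and then extend that construction to $k=2$.

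\textbf{The case $k=1$.} Take the $mn$ hyperplanes $x_i=j$ for $1\le i\le n$ and $j\in\{0,1,\ldots,m\}\setminus\{a_i\}$.
\begin{itemize}
\item None of them contains $\bm a$, since $\bm a$ has $x_i=a_i\ne j$.
\item Every $\bm x\in mB^n\setminus\{\bm a\}$ has some coordinate $x_i\ne a_i$, so $\bm x$ lies on the hyperplane $x_i=x_i$ in this family.
\end{itemize}
More precisely, $\bm x$ is covered exactly $d(\bm x)$ times, where $d(\bm x)=|\{i: x_i\ne a_i\}|$ is its Hamming distance to $\bm a$.

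\textbf{The case $k=2$.} By the multiplicity count above, the same $mn$ hyperplanes already cover twice every point with $d(\bm x)\ge 2$. The remaining points are those with $d(\bm x)=1$. They form the $n$ "axis lines" through $\bm a$: the points $\bm a+s\bm e_i$ with $s\in S_i':=\{-a_i,\ldots,m-a_i\}\setminus\{0\}$. Note that $|S_i'|=m$ for every $i$. So it suffices to find $m$ further hyperplanes that avoid $\bm a$ and cover each such axis point at least once.

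For each $i$, fix a bijection $\sigma_i:\{1,\ldots,m\}\to S_i'$. For $t=1,\ldots,m$, define
\[
H_t:\ \sum_{i=1}^n \frac{x_i-a_i}{\sigma_i(t)}=1 .
\]
Each $H_t$ is a genuine hyperplane because all $\sigma_i(t)\ne 0$. We now check its incidences.
\begin{itemize}
\item At $\bm a$ the left side equals $0$, so $\bm a\notin H_t$.
\item At the point $\bm a+s\bm e_i$ the left side equals $s/\sigma_i(t)$. So $\bm a+s\bm e_i\in H_t$ if and only if $\sigma_i(t)=s$.
\item Because $\sigma_i$ is a bijection, each axis point $\bm a+s\bm e_i$ lies on exactly one $H_t$.
\end{itemize}
Hence the family $\{x_i=j\}\cup\{H_1,\ldots,H_m\}$ has size $mn+m$. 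It covers every point of $mB^n\setminus\{\bm a\}$ at least twice and leaves $\bm a$ uncovered. Together with the lower bound this gives $f(mB^n,2,\bm a)=mn+m$.

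\textbf{Main obstacle.} I expect the one delicate step to be choosing the $m$ extra hyperplanes so that each hits exactly one point on every axis line through $\bm a$ without passing through $\bm a$. The "intercept form" hyperplanes $H_t$, with intercepts $\sigma_i(t)$ running bijectively over the nonzero offsets, resolve this. The hypothesis $n\ge 2$ matters only in that Lemma \ref{ba} is being applied as quoted. The constructions themselves work for every $n\ge 1$.
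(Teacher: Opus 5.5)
Your proof is correct and is essentially the paper's argument. The lower bound comes from Lemma~\ref{ba} with $S_i=\{0,1,\ldots,m\}$ and $D_i=\{a_i\}$. The $mn$ hyperplanes $x_i=j$ ($j\ne a_i$) settle $k=1$ and already doubly cover every point at Hamming distance at least $2$ from $\bm a$. Your intercept-form hyperplanes $H_t$ are exactly the paper's $H_s$, with $\sigma_i(t)=b_i^{(t)}-a_i$.

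(Minor: ``the hyperplane $x_i=x_i$'' should read ``the hyperplane $x_i=j$ with $j$ the $i$-th coordinate of $\bm x$''. Also, Lemma~\ref{ba} does not itself require $n\ge 2$.)
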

\begin{proof}
    Let $\bm{a} = (a_1, \ldots, a_n)$. Consider the $mn$ hyperplanes defined by $x_i=j$, where $i\in\{1,\ldots,n\}$ and $j\in\{0,1,\ldots,m\}\setminus\{a_i\}$. A simple verification shows that $\bm a$ is uncovered, points differing from $\bm a$ in exactly one coordinate are covered once, and all remaining points are covered at least twice.
    Hence $f(mB^n,1,\bm a)=mn$. To prove that $f(mB^n,2,\bm a)=mn+m$, it suffices to construct $m$ hyperplanes that cover all points that differ from $\bm{a}$ in exactly one coordinate, but do not pass through $\bm{a}$. For $i=1,\ldots,n$, enumerate the elements of $\{0,1,\ldots,m\}\setminus\{a_i\}$ as $b_i^{(1)},\ldots,b_i^{(m)}$. For $s=1,\ldots,m$, define
    \[
    H_s:\quad
    \sum_{i=1}^n\frac{x_i-a_i}{b_i^{(s)}-a_i}=1.
    \]
    Each $H_s$ avoids $\bm a$ and passes through the point
    $(a_1,\ldots,a_{i-1},b_i^{(s)},a_{i+1},\ldots,a_n)$ for every $i$.
    Therefore, $H_1,\ldots,H_m$ cover every point differing from
    $\bm a$ in exactly one coordinate.
\end{proof}

    To end this section, we turn to the bounds in Theorem \ref{main-1} \eqref{eq2}.
    We begin with the parameterized upper bound. Recall that
\[
U_{m,n,k}
=m\min_{1\le c\le k}
\left\{
cn+\rho_ck-\frac{c\rho_c(\rho_c+1)}2
\right\},
\]
where
$\rho_c=\min\left\{n,\left\lfloor(k-1)/c\right\rfloor\right\}$
for $c=1,\ldots,k$.

\begin{prop}\label{prop:parameterized-upper}
For $k\ge1$,
\[
f(mB^n,k)\le U_{m,n,k}.
\]
\end{prop}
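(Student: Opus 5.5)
For each fixed $c\in\{1,\ldots,k\}$ we build an explicit multiset of hyperplanes of size $m\bigl(cn+\rho_ck-c\rho_c(\rho_c+1)/2\bigr)$ that covers every point of $mB^n\setminus\{\bm 0\}$ at least $k$ times and misses $\bm 0$. Minimizing over $c$ then gives $f(mB^n,k)\le U_{m,n,k}$. The construction has two layers: coordinate hyperplanes, which reward points with large support, and level hyperplanes $x_1+\cdots+x_n=t$, which repair the points with small support. This generalizes the Clifton--Huang construction, which is the case $m=1$, $c=1$.

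\emph{First layer.} Take each hyperplane $x_i=j$, for $i=1,\ldots,n$ and $j=1,\ldots,m$, with multiplicity $c$. This costs $cmn$, and none of these hyperplanes passes through $\bm 0$. Let $\bm p\in mB^n$ have support size $w=|\{i:p_i\neq0\}|$. For each $i$ in the support, $\bm p$ lies on $x_i=p_i$ and on no other hyperplane $x_i=j$ with $j\ge 1$. Hence $\bm p$ is covered exactly $cw$ times. Points with $cw\ge k$, that is $w>\lfloor (k-1)/c\rfloor$, are already done. The deficient points are those with $1\le w\le \rho_c$; since $w\le n$ automatically, $\rho_c=\min\{n,\lfloor(k-1)/c\rfloor\}$ is the right cap. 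Such a point still needs $k-cw$ further coverings.

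\emph{Second layer.} For each $t\ge1$, take the level hyperplane $\Sigma_t: x_1+\cdots+x_n=t$ with multiplicity
\[
\mu_t=\max\bigl\{0,\;k-c\lceil t/m\rceil\bigr\}.
\]
Restrict also to $\lceil t/m\rceil\le n$, which holds for every $\bm p\in mB^n$ since each coordinate is at most $m$. These hyperplanes avoid $\bm 0$ because $t\ge1$.

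\emph{Coverage.} A nonzero point $\bm p$ with support $w$ and coordinate sum $t$ satisfies $w\le t\le mw$, so $\lceil t/m\rceil\le w$. It lies on exactly one level hyperplane, namely $\Sigma_t$. Since $c\lceil t/m\rceil\le cw$, we get $\mu_t\ge k-cw$. Therefore $\bm p$ is covered at least $cw+(k-cw)=k$ times in total.

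\emph{Counting.} Group the $t$ by $q=\lceil t/m\rceil$; each value of $q$ occurs for exactly $m$ values of $t$. The weight $\mu_t$ is positive exactly when $1\le q\le\lfloor (k-1)/c\rfloor$, and we only need $q\le n$, so $q$ runs over $1,\ldots,\rho_c$. The second layer therefore costs
\[
m\sum_{q=1}^{\rho_c}(k-cq)=m\Bigl(\rho_ck-\frac{c\rho_c(\rho_c+1)}2\Bigr).
\]
Adding the first layer gives exactly the $c$-th term of $U_{m,n,k}$.

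The only delicate step is the coverage check: the level multiplicity must be chosen via $\lceil t/m\rceil$, the smallest support compatible with sum $t$, rather than via $w$ itself. Once that choice is made, the rest is a direct verification.
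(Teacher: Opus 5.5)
Your proposal is correct and is essentially the paper's proof: it uses the same multiset ($c$ copies of each $x_i=j$, plus $k-ch$ copies of each level hyperplane $x_1+\cdots+x_n=u$ with $\lceil u/m\rceil=h\le\rho_c$) and the same count. The only difference is cosmetic. Writing the multiplicity as $\mu_t=\max\{0,k-c\lceil t/m\rceil\}$ lets you check coverage in one line via $\lceil t/m\rceil\le w$, whereas the paper splits into the cases $h\le\rho_c$ and $h>\rho_c$.
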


\begin{proof}
    Fix $c\in\{1,\ldots,k\}$ and put $\rho=\rho_c$. Take $c$ copies
    of each coordinate hyperplane
    \[
    x_i=j,
    \qquad
    i=1,\ldots,n,\quad j=1,\ldots,m.
    \]
    In addition, for each $h=1,\ldots,\rho$, take $k-ch$ copies of each
    hyperplane
    \[
    x_1+\cdots+x_n=u,
    \qquad
    u=(h-1)m+1,\ldots,hm.
    \]
    Here $k-ch\ge1$ since $ch\le c\rho\le k-1$. All these hyperplanes
    avoid $\bm{0}$, and their total number is
    \[
    cmn+m\sum_{h=1}^{\rho}(k-ch)
    =m\left(cn+\rho k-\frac{c\rho(\rho+1)}{2}\right).
    \]
    To verify the covering multiplicity, fix
    $\bm b=(b_1,\ldots,b_n)\in mB^n\setminus\{\bm{0}\}$. Let $q$ be
    the number of nonzero coordinates of $\bm b$, and set
    \[
    u=b_1+\cdots+b_n,
    \qquad
    h=\left\lceil\frac{u}{m}\right\rceil.
    \]
    Since $1\le u\le qm$, we have $1\le h\le q$. The coordinate
    hyperplanes cover $\bm b$ exactly $cq$ times. If $h\le\rho$, then
    $(h-1)m<u\le hm$, so the construction contains $k-ch$ copies of
    the hyperplane $x_1+\cdots+x_n=u$ through $\bm b$. Hence $\bm b$
    is covered
    \[
    cq+k-ch=k+c(q-h)\ge k
    \]
    times. If $h>\rho$, then $\rho<n$ because $h\le q\le n$. It follows that
    $\rho=\lfloor(k-1)/c\rfloor$, and hence $c(\rho+1)\ge k$. Since
    $q\ge h\ge\rho+1$, we have $cq\ge k$. Thus the coordinate hyperplanes
    alone cover $\bm b$ at least $k$ times. Taking the minimum over
    $c\in\{1,\ldots,k\}$ gives $f(mB^n,k)\le U_{m,n,k}$.
\end{proof}

When $n<k-1$, taking $c=1$ gives
$U_{m,n,k}\le
mn+m\binom{k}{2}-m\binom{k-n}{2}
<mn+m\binom{k}{2}$.
In the range $k\ge3$ and $n\ge k-1$ of Theorem
\ref{main-1} \eqref{eq2}, the following proposition determines
exactly when the choice $c=1$ is optimal.

\begin{prop}\label{prop:c-one-threshold}
Let $k\ge3$ and $n\ge k-1$. In the definition of $U_{m,n,k}$, the choice
$c=1$ is optimal if and only if
\[
n\ge \left\lfloor\frac{k^2}{4}\right\rfloor.
\]
It is the unique optimal choice when the inequality is strict,
whereas at $n=\lfloor k^2/4\rfloor$ the only optimal choices are
$c=1$ and $c=2$.
\end{prop}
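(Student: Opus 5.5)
Since $n\ge k-1\ge (k-1)/c$ for every $c\ge1$, the truncation in $\rho_c$ never takes effect, so $\rho_c=\lfloor (k-1)/c\rfloor$ throughout. I would drop the common factor $m$ and compare the quantities
\[
g(c)=cn+\sum_{h=1}^{\rho_c}(k-ch)=cn+\rho_ck-\frac{c\rho_c(\rho_c+1)}2 .
\]
For $c=1$ this gives $g(1)=n+\binom{k}{2}=n+\sum_{h=1}^{k-1}(k-h)$. Write
\[
g(c)-g(1)=(c-1)n-A_c,\qquad A_c=\sum_{h=1}^{k-1}(k-h)-\sum_{h=1}^{\rho_c}(k-ch).
\]
Then $c=1$ is at least as good as $c$ exactly when $(c-1)n\ge A_c$. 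The claim thus reduces to two facts. First, $A_2=\lfloor k^2/4\rfloor$. Second, $A_c<(c-1)\lfloor k^2/4\rfloor$ for every $c\ge3$.

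For the first fact I compute directly with $\rho_2=\lfloor (k-1)/2\rfloor$, splitting by parity.
\begin{itemize}
\item If $k=2t$, then $\rho_2=t-1$ and $A_2=t(2t-1)-2t(t-1)+t(t-1)=t^2$.
\item If $k=2t+1$, then $\rho_2=t$ and $A_2=t(2t+1)-t(2t+1)+t(t+1)=t(t+1)$.
\end{itemize}
In both cases $A_2=\lfloor k^2/4\rfloor$. Hence $g(2)-g(1)=n-\lfloor k^2/4\rfloor$. So if $n<\lfloor k^2/4\rfloor$, then $c=2$ is strictly better than $c=1$, which proves the ``only if'' direction. If $n=\lfloor k^2/4\rfloor$, then $c=1$ and $c=2$ tie.

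For the second fact I use a crude bound. Each term $k-ch$ with $h\le\rho_c$ is at least $1$, so the subtracted sum is nonnegative and $A_c\le\binom{k}{2}$. On the other hand, for $c\ge3$ we have $\lfloor k^2/4\rfloor\ge (k^2-1)/4$, so
\[
(c-1)\lfloor k^2/4\rfloor\ge 2\lfloor k^2/4\rfloor\ge \frac{k^2-1}{2}>\frac{k^2-k}{2}=\binom{k}{2}\ge A_c ,
\]
where the strict inequality uses $k>1$. Consequently, whenever $n\ge\lfloor k^2/4\rfloor$, every $c\ge3$ satisfies $g(c)-g(1)=(c-1)n-A_c>0$.

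Combining the two facts gives all three assertions:
\begin{itemize}
\item When $n\ge\lfloor k^2/4\rfloor$, the choice $c=1$ is optimal.
\item When $n>\lfloor k^2/4\rfloor$, it is the unique optimal choice, since $g(2)>g(1)$ as well.
\item When $n=\lfloor k^2/4\rfloor$, exactly $c=1$ and $c=2$ are optimal.
\end{itemize}

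The only step needing care is the exact evaluation of $A_2$ in the two parity cases. Even that is routine, so I expect no genuine obstacle. The crude bound $A_c\le\binom{k}{2}$ already suffices to dispose of every $c\ge3$.
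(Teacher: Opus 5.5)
Your proof is correct and is essentially the paper's argument. Both compute $g(2)-g(1)=n-\lfloor k^2/4\rfloor$ directly, and for $c\ge3$ both bound $g(c)-g(1)\ge (c-1)n-\binom{k}{2}>0$, using that the $\rho_c$-dependent part $\sum_{h=1}^{\rho_c}(k-ch)$ is nonnegative together with $2\lfloor k^2/4\rfloor>\binom{k}{2}$; the only cosmetic difference is that you read off this nonnegativity termwise, whereas the paper writes $k-c(\rho_c+1)/2=\bigl((k-c\rho_c)+(k-c)\bigr)/2\ge\frac12$.
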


\begin{proof}
Since $n\ge k-1$, we have
$\rho_c=\lfloor(k-1)/c\rfloor$ for $c=1,\ldots,k$. Define
\[
G_c(n)
=cn+\rho_ck-\frac{c\rho_c(\rho_c+1)}{2}.
\]
Then $U_{m,n,k}=m\min_{1\le c\le k}G_c(n)$ and
$G_1(n)=n+\binom{k}{2}$. Put $T=\lfloor k^2/4\rfloor$. For $c=2$,
direct substitution gives
\[
G_2(n)-G_1(n)=n-T.
\]
Now let $3\le c\le k$ and suppose that $n\ge T$. We have
\[
G_c(n)-G_1(n)
=(c-1)n-\binom{k}{2}
+\rho_c\left(k-\frac{c(\rho_c+1)}{2}\right).
\]
Since $c\ge3$ and $n\ge T$,
\[
(c-1)n-\binom{k}{2}
\ge2T-\binom{k}{2}
=\left\lfloor\frac{k}{2}\right\rfloor
>0.
\]
Moreover, since $c\rho_c\le k-1$ and $c\le k$,
\[
k-\frac{c(\rho_c+1)}{2}
=
\frac{(k-c\rho_c)+(k-c)}{2}
\ge\frac12.
\]
Since $\rho_c\ge0$, this implies
$\rho_c\bigl(k-c(\rho_c+1)/2\bigr)\ge0$.
Therefore, $G_c(n)>G_1(n)$ for every $3\le c\le k$ whenever
$n\ge T$. If $n<T$, then $G_2(n)<G_1(n)$, so $c=1$ is not optimal.
If $n=T$, then $G_2(n)=G_1(n)$, while
$G_c(n)>G_1(n)$ for every $3\le c\le k$. Thus $c=1$ and $c=2$
are the only optimal choices. If $n>T$, then
$G_2(n)>G_1(n)$ as well, so $c=1$ is the unique optimal choice.
\end{proof}

\begin{proof}[\bf{Proof of Theorem \ref{main-1} \eqref{eq2}}]
    Let $\mathcal H$ be a covering multiset of size $f(mB^n,k)$, and let
    $P$ be the product of its defining linear polynomials. Then
    $\deg P=|\mathcal H|=f(mB^n,k)$, $P(\bm 0)\ne0$, and
    $P$ has zeros of multiplicity at least $k$ at all points in
    $mB^n\setminus\{\bm 0\}$. Applying Theorem \ref{main-2} \eqref{eq4}
    with $l=0$ gives the lower bound in \eqref{eq2}. The upper bound
    follows from Proposition \ref{prop:parameterized-upper}.
\end{proof}

\section{Reduced Polynomials and Degree Bounds}\label{Sec3}
To prove Theorem \ref{main-2}, we establish several supporting lemmas. The proof of Lemma \ref{c11} is deferred to the next section. For $k\ge 2$, a polynomial $P\in\mathbb{R}[x_1,\ldots,x_n]$ is called \emph{$(m,k)$-reduced} if $\deg P \le mn+(m+1)(k-1)-1$ and no monomial of $P$ is divisible by $x_{i_1}^{m+1}\cdots x_{i_k}^{m+1}$ for any indices $i_1,\ldots,i_k$ (not necessarily distinct). Let $V_{m,k}$ denote the set of all $(m,k)$-reduced polynomials. It
is easily seen that $V_{m,k}$ forms a subspace of
$\mathbb{R}[x_1,\ldots,x_n]$. For each variable $x_i$, define the falling and rising factorials by $(x_i)^{\underline{0}}=(x_i)^{\overline{0}}=1$ and, for $r\ge 1$,
\[
(x_i)^{\underline{r}}=x_i(x_i-1)\cdots(x_i-r+1),\qquad
(x_i)^{\overline{r}}=x_i(x_i+1)\cdots(x_i+r-1).
\]

\begin{Lemma}\label{c1}
	Let $k\ge 2$. For every polynomial $P\in \mathbb{R}[x_1,\ldots,x_n]$, there exists a polynomial $P_0\in V_{m,k}$ with $\deg P_0 \le \deg P$ such that $P-P_0$ has zeros of multiplicity at least $k$ at
	all points in $mB^n \setminus \{\bm{0}\}$ and a zero of multiplicity at least $k-1$ at $\bm{0}$. Furthermore, if $\deg P\le mn+(m+1)(k-1)-1$, then one may choose $P_0\in V_{m,k}$ with $\deg P_0 \le \deg P$ such that $P-P_0$ has zeros of multiplicity at least $k$ at all points in $mB^n$.
\end{Lemma}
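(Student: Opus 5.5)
We will argue by a division-type reduction, in the spirit of Lemma \ref{ba}, using the polynomials $g_i=(x_i)^{\underline{m+1}}=x_i(x_i-1)\cdots(x_i-m)$. The basic observation is this: for any tuple $i_1,\ldots,i_k$ (not necessarily distinct), the product $g_{i_1}\cdots g_{i_k}$ has a zero of multiplicity at least $k$ at every point of $mB^n$. Indeed, each factor $g_{i_r}$ vanishes to order at least $1$ at every grid point. Consequently, any polynomial in the ideal $I_k$ generated by all such $k$-fold products vanishes to order at least $k$ on all of $mB^n$.

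\emph{Step 1 (reduction of monomials).} We repeatedly replace each monomial divisible by $x_{i_1}^{m+1}\cdots x_{i_k}^{m+1}$ by the remainder after subtracting an appropriate multiple $c\,x^{\beta}g_{i_1}\cdots g_{i_k}$.
\begin{itemize}
\item Since $g_{i_1}\cdots g_{i_k}$ has leading monomial $x_{i_1}^{m+1}\cdots x_{i_k}^{m+1}$, the remaining terms have strictly smaller total degree.
\item Hence the process terminates; it is an induction on degree, or on the number of offending monomials of maximal degree.
\item It produces $Q$ with $P-Q\in I_k$ and $\deg Q\le \deg P$, such that no monomial of $Q$ is divisible by any $k$-fold product $x_{i_1}^{m+1}\cdots x_{i_k}^{m+1}$.
\end{itemize}
At this point $P-Q$ vanishes to order $\ge k$ on all of $mB^n$. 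If $\deg P\le mn+(m+1)(k-1)-1$, then $Q\in V_{m,k}$, which already proves the second assertion with $P_0=Q$.

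\emph{Step 2 (lowering the degree of $Q$ for the first assertion).} It remains to handle the monomials of $Q$ of degree $\ge mn+(m+1)(k-1)$. Such a monomial $x^\alpha$ satisfies two constraints:
\begin{itemize}
\item Its exponents satisfy $\sum_i\lfloor \alpha_i/(m+1)\rfloor\le k-1$, because otherwise it would be divisible by a forbidden $k$-fold product.
\item Writing $\alpha_i=(m+1)q_i+r_i$ with $0\le r_i\le m$, we get $\deg x^\alpha\le (m+1)(k-1)+mn$.
\end{itemize}
So the offending monomials have degree exactly $N=mn+(m+1)(k-1)$, with $\sum q_i=k-1$ and every $r_i=m$. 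For each such monomial we will subtract the polynomial
\[
R_\alpha=\prod_i g_i^{q_i}\cdot\prod_i (x_i)^{\overline{m}}\Big/ \text{(suitably)},
\]
or more concretely $\prod_i g_i^{q_i}\prod_i (x_i-1)\cdots(x_i-m)$. This polynomial has leading monomial $x^\alpha$ and degree $N$. We check its vanishing orders:
\begin{itemize}
\item At a point $\bm b\neq\bm 0$, some coordinate $b_j\neq 0$. The factor $(x_j-1)\cdots(x_j-m)$ vanishes at $b_j$, and $g^{q}$ contributes $\sum q_i=k-1$. So the order is $\ge k$.
\item At $\bm 0$, the factors $g_i^{q_i}$ give order $\ge k-1$.
\end{itemize}
Subtracting multiples of these polynomials removes all degree-$N$ monomials while only introducing lower-degree terms. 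We then re-apply Step 1 to those lower-degree terms, which does not recreate degree-$N$ terms. The result is $P_0\in V_{m,k}$ with $\deg P_0\le\deg P$.

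\emph{Main obstacle.} The delicate point is bookkeeping the interleaving of the two reductions so that the process terminates with the degree bound preserved. This is handled by ordering: first reduce all monomials of top degree by Step 1 or Step 2, then proceed downward by induction on degree. Every subtraction preserves the required vanishing, because the ideal $I_k$ and the $R_\alpha$'s vanish to the correct orders. The multiplicity count for $R_\alpha$ at nonzero points should be verified carefully: it uses that the orders of a product add, and that $g_i$ vanishes to order at least $1$ at every grid point.
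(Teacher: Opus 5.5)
Your proposal is correct and is essentially the paper's own proof. Your ideal $I_k$ is the paper's $V_1$, and your concrete $R_\alpha=\prod_i g_i^{q_i}\prod_i(x_i-1)\cdots(x_i-m)$ is exactly a spanning element of the paper's $V_2$. The paper likewise repeatedly cancels a maximal-degree monomial of either forbidden shape, using only $V_1$-reductions when $\deg P\le mn+(m+1)(k-1)-1$.

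Discard the stray first formula with $(x_i)^{\overline{m}}$: it does not vanish at points such as $(1,\ldots,1)$. Also note that every monomial $\prod_i x_i^{e_i}$ of $R_\alpha$ has $e_i\le (m+1)q_i+m$, hence $\sum_i\lfloor e_i/(m+1)\rfloor\le k-1$, so re-applying Step 1 after Step 2 is in fact vacuous.
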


\begin{proof}
Let $V_1$ be the ideal of $\mathbb{R}[x_1,\ldots,x_n]$ generated by the polynomials
	\[
\prod_{j=1}^{k} (x_{i_j})^{\underline{m+1}} \quad \text{with}\;  i_1,\ldots,i_k \in \{1,\ldots,n\}
	\]
and let $V_2$ be the $\mathbb{R}$-linear space spanned by the polynomials
	\[
\prod_{i=1}^{n}(x_{i} - 1)^{\underline{m}} \cdot \prod_{j=1}^{k-1}(x_{i_j})^{\underline{m+1}} \quad \text{with}\;   i_1,\ldots,i_{k-1} \in \{1,\ldots,n\}.
	\]
Then every element of $V_1$ has zeros of multiplicity at least $k$ at all points in $mB^n$, and every element of $V_2$ has zeros of multiplicity at least $k$ at all points in $mB^n \setminus \{\bm{0}\}$ and a zero of multiplicity at least $k-1$ at $\bm{0}$.  To prove the existence of $P_0$, it suffices to show that
	\begin{equation*}
		\begin{aligned}
			\mathbb{R}[x_1,\ldots,x_n] &= V_{m,k}
			+ V_1  + V_2.
		\end{aligned}
	\end{equation*}
Note that the polynomials 
\begin{equation}\label{2polynomials}
x_1^{l_1}\cdots x_n^{l_n} \cdot \prod_{j=1}^{k} (x_{i_j})^{\underline{m+1}} \quad \text{and} \quad \prod_{i=1}^{n}(x_{i} - 1)^{\underline{m}} \cdot \prod_{j=1}^{k-1}(x_{i_j})^{\underline{m+1}}
\end{equation}
belong to $V_1$ and $V_2$ respectively. Their respective leading monomials are
\begin{equation}\label{2monomials}
	x_{i_1}^{m+1}\cdots x_{i_k}^{m+1} \cdot x_1^{l_1}\cdots x_n^{l_n} \quad \text{and} \quad x_{i_1}^{m+1}\cdots x_{i_{k-1}}^{m+1} \cdot (x_1\cdots x_n)^{m}.
\end{equation}
Now suppose that $P$ contains a monomial of one of the forms in
\eqref{2monomials}. Choose such a monomial of maximal degree and
subtract an appropriate scalar multiple of the corresponding
polynomial in \eqref{2polynomials}. This cancels the chosen monomial
and introduces only monomials of strictly smaller degree. By repeating this subtraction finitely many times, we can obtain a polynomial $P_0$ such that $P_0$ contains no monomials of the forms in \eqref{2monomials} and
\[
P-P_0\in V_1+V_2.
\] 
It remains to show that $P_0\in V_{m,k}$. Since $P_0$ contains no monomial of the form $x_{i_1}^{m+1}\cdots x_{i_k}^{m+1} \cdot x_1^{l_1}\cdots x_n^{l_n}$, we observe the following two facts. First, no monomial of $P_0$ is divisible by $x_{i_1}^{m+1}\cdots x_{i_k}^{m+1}$. Second, among all possible monomials in $P_0$, any monomial of the maximal degree must be of the form $x_{i_1}^{m+1}\cdots x_{i_{k-1}}^{m+1} \cdot (x_1\cdots x_n)^{m}$. Consequently, we have the inequality $\deg(P_0)\le mn+(m+1)(k-1)$. Moreover, because $P_0$ also contains no monomial of the form $x_{i_1}^{m+1}\cdots x_{i_{k-1}}^{m+1} \cdot (x_1\cdots x_n)^{m}$, the inequality is strict, i.e.,
\[
\deg(P_0)\le mn+(m+1)(k-1)-1.
\]
So far we have proved $P_0 \in V_{m,k}$. Now consider the case $\deg P\le mn+(m+1)(k-1)-1$. In this case, $P$ contains no monomial of the form $x_{i_1}^{m+1}\cdots x_{i_{k-1}}^{m+1} \cdot (x_1\cdots x_n)^{m}$. The above arguments imply that $P_0$ is obtained from $P$ by subtracting polynomials of the form $x_1^{l_1}\cdots x_n^{l_n} \cdot \prod_{j=1}^{k} (x_{i_j})^{\underline{m+1}}$. It follows that $P-P_0\in V_1$, and hence $P-P_0$ has zeros of multiplicity at least $k$ at all points in $mB^n$.
\end{proof}

For $k\ge1$, let $M_k(n)=\binom{n+k-1}{n}$ be the number of
$n$-tuples $(l_1,\ldots,l_n)$ of nonnegative integers with
$l_1+\cdots+l_n<k$. Equivalently, $M_k(n)$ is the number of derivative
operators of order less than $k$ with respect to $x_1,\ldots,x_n$.
For $k\ge2$, set
$N=\bigl((m+1)^n-1\bigr)M_k(n)+M_{k-1}(n)$, and define a linear map
\[
\psi_{m,k}:V_{m,k}\rightarrow \mathbb{R}^N
\]
by sending each $P\in V_{m,k}$ to the $N$-tuple consisting of all
derivatives of $P$ of order less than $k$ at all points in
$mB^n \setminus \{\bm{0}\}$, and all derivatives of $P$ of order
less than $k-1$ at $\bm{0}$.

\begin{Lemma}\label{c2}
	For $k\ge 2$, we have $\dim V_{m,k}=N$.
\end{Lemma}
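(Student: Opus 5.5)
The plan is to compute $\dim V_{m,k}$ directly by counting monomials. By definition, $V_{m,k}$ is spanned by the monomials it contains: the conditions on degree and divisibility are imposed monomial by monomial. So $\dim V_{m,k}$ equals the number of exponent vectors $\bm a=(a_1,\ldots,a_n)$ with $a_1+\cdots+a_n\le mn+(m+1)(k-1)-1$ such that $x_1^{a_1}\cdots x_n^{a_n}$ is not divisible by any product $x_{i_1}^{m+1}\cdots x_{i_k}^{m+1}$. I will then show this count equals $N=\bigl((m+1)^n-1\bigr)M_k(n)+M_{k-1}(n)$.

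First I reparametrize each exponent by division with remainder: $a_i=(m+1)q_i+r_i$ with $q_i\ge0$ and $0\le r_i\le m$. This gives a bijection between exponent vectors $\bm a$ and pairs $(\bm q,\bm r)$ with $\bm r\in\{0,\ldots,m\}^n$. The largest $t$ with $x^{\bm a}$ divisible by a product of $t$ factors of the form $x_i^{m+1}$ (repetitions allowed) is $Q=q_1+\cdots+q_n$. Hence the divisibility condition is exactly $Q\le k-1$. The degree is $(m+1)Q+R$, where $R=r_1+\cdots+r_n\le mn$.

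Next I split into two cases according to $Q$.
\begin{itemize}
\item If $Q\le k-2$, then $(m+1)Q+R\le (m+1)(k-2)+mn$. This is at most the bound $mn+(m+1)(k-1)-1$, because the difference is $m\ge 0$. So every $\bm r$ is allowed.
\item If $Q=k-1$, the degree condition becomes $R\le mn-1$. This excludes exactly the single choice $\bm r=(m,\ldots,m)$.
\end{itemize}

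Now I count. There are $M_k(n)$ vectors $\bm q$ with $Q\le k-1$, and $M_k(n)-M_{k-1}(n)$ of them have $Q=k-1$ exactly. Therefore
\[
\dim V_{m,k}=(m+1)^nM_k(n)-\bigl(M_k(n)-M_{k-1}(n)\bigr)=\bigl((m+1)^n-1\bigr)M_k(n)+M_{k-1}(n)=N.
\]

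The only step needing care is confirming that the divisibility condition is precisely $Q\le k-1$. If $x^{\bm a}$ is divisible by $x_{i_1}^{m+1}\cdots x_{i_k}^{m+1}$, then each index $i$ occurs at most $q_i$ times among $i_1,\ldots,i_k$, so $k\le Q$. Conversely, if $Q\ge k$, one can choose $k$ indices in which each $i$ is used at most $q_i$ times, and the resulting product divides $x^{\bm a}$. Everything else is routine bookkeeping.
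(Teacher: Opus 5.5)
Your proof is correct and is essentially the paper's argument: both count the monomial basis of $V_{m,k}$ using $a_i=(m+1)q_i+r_i$, and both find that the only excluded combinations are those with $q_1+\cdots+q_n=k-1$ and $\bm r=(m,\ldots,m)$. The paper splits cases by whether $r_1+\cdots+r_n=mn$ rather than by the value of $Q$, and it takes the equivalence between the divisibility condition and $Q\le k-1$ as immediate from the definition, whereas you justify it explicitly.
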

\begin{proof}
	By definition, a basis of $V_{m,k}$ consists of monomials of the form 
	\[x_1^{(m+1)l_1+r_1} \cdots x_n^{(m+1)l_n+r_n}\]
	with nonnegative integers $l_1,\ldots,l_n$ satisfying $l_1+\cdots+l_n<k$ and $r_i\in \{0,1,\ldots ,m\}$ such that 
	\[
	(m+1)(l_1+\cdots+l_n)+r_1+\cdots +r_n\le mn+(m+1)(k-1)-1.
	\]
	If $r_1+\cdots+r_n=mn$, then we have $(m+1)(l_1+\cdots+l_n)\le (m+1)(k-1)-1$, which is equivalent to $l_1+\cdots+l_n<k-1$. Hence, there are $M_{k-1}(n)$ choices for $\left(l_1,\ldots,l_n\right)$.
	If $r_1+\cdots +r_n\le mn-1$, there are $(m+1)^n-1$ choices for $(r_1,\ldots,r_n)$, and for each such choice, the degree condition reduces to $l_1+\cdots+l_n<k$, yielding $M_k(n)$ choices for $(l_1,\ldots,l_n)$.
	Thus, $\dim V_{m,k}=\bigl((m+1)^n-1\bigr)M_k(n)+M_{k-1}(n)=N$.
\end{proof}
By combining Lemmas \ref{c1} and \ref{c2}, we obtain the following result.

\begin{Lemma}\label{c3} 
	For $k\ge 2$, the map $\psi_{m,k}:V_{m,k}\rightarrow \mathbb{R}^N$ is an isomorphism.
\end{Lemma}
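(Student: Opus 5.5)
Since Lemma \ref{c2} gives $\dim V_{m,k}=N$, the domain and codomain of the linear map $\psi_{m,k}$ have the same dimension. It therefore suffices to prove surjectivity; injectivity then follows by rank--nullity. I would obtain surjectivity from Lemma \ref{c1} together with a construction showing that every prescribed tuple of derivative data is realized by some polynomial.

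\textbf{Step 1: realize arbitrary data by some polynomial.} Fix a target vector $\bm w\in\mathbb{R}^N$. It prescribes the derivatives of order $<k$ at each point of $mB^n\setminus\{\bm 0\}$ and the derivatives of order $<k-1$ at $\bm 0$. I would first extend $\bm w$ arbitrarily, say by zeros, to prescribed derivatives of order $<k$ at $\bm 0$ as well. Then I would invoke Hermite interpolation on the finite set $mB^n\subset\mathbb{R}^n$: for finitely many points and finite-order jets, there is a polynomial $P$ with the given jets. Concretely, for each point $\bm a$ I would take a polynomial $e_{\bm a}$ that is $\equiv 1$ to order $k$ at $\bm a$ and vanishes to order $k$ at all other points of $mB^n$. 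One choice is a suitable power construction built from
\[
\prod_{i}\prod_{j\ne a_i}\frac{x_i-j}{a_i-j},
\]
corrected so that it is $\equiv 1$ modulo $\mathfrak m_{\bm a}^k$. Multiplying $e_{\bm a}$ by the Taylor polynomial of the prescribed jet at $\bm a$ and summing over $\bm a$ gives $P$. This is standard Chinese remainder theorem reasoning, since the ideals $\mathfrak m_{\bm a}^k$ are pairwise comaximal.

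\textbf{Step 2: reduce into $V_{m,k}$.} Apply the first part of Lemma \ref{c1} to $P$. It yields $P_0\in V_{m,k}$ such that $P-P_0$ vanishes to order $\ge k$ at every point of $mB^n\setminus\{\bm 0\}$ and to order $\ge k-1$ at $\bm 0$. Consequently all derivatives of $P-P_0$ of order $<k$ vanish at the nonzero points, and those of order $<k-1$ vanish at $\bm 0$. This means $\psi_{m,k}(P_0)=\psi_{m,k}$-data of $P$, which is $\bm w$. Hence $\psi_{m,k}$ is surjective, and by equality of dimensions it is an isomorphism.

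The only step requiring care is Step 1, the existence of simultaneous Hermite interpolation with multivariate jets. I would handle it through the Chinese remainder theorem for the comaximal ideals $\mathfrak m_{\bm a}^k$, using the fact that $\mathbb{R}[\bm x]/\mathfrak m_{\bm a}^k$ is identified with jets of order $<k$ at $\bm a$ via Taylor expansion. Everything else is bookkeeping on the definitions.
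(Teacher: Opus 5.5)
Your proof is correct and follows the paper's argument: you reduce to surjectivity via $\dim V_{m,k}=N$ (Lemma \ref{c2}), realize the prescribed derivative data by some polynomial $P$, and pass to $P_0\in V_{m,k}$ using the first part of Lemma \ref{c1}. The only difference is how you build $P$: you use Chinese-remainder idempotents for the comaximal ideals $\mathfrak m_{\bm a}^k$ (for instance $e_{\bm a}=1-(1-L_{\bm a}^k)^k$, where $L_{\bm a}$ is your normalized Lagrange product), whereas the paper takes the explicit polynomials $P_{\bm i,\bm b}$ and solves a lower-triangular system; both routes are valid.
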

\begin{proof}
	Since $\dim V_{m,k} = N$ by Lemma \ref{c2}, it suffices to prove that $\psi_{m,k} \colon V_{m,k} \to \mathbb{R}^N$ is surjective. For any $N$-tuple $\alpha \in \mathbb{R}^N$, we can construct a polynomial $P$ of sufficiently large degree whose derivatives of order less than $k$ at all points in $mB^n \setminus \{\bm{0}\}$, and of order less than $k-1$ at $\bm{0}$, coincide with the entries of $\alpha$.
	By Lemma \ref{c1}, there exists $P_0 \in V_{m,k}$ such that $P-P_0$ has zeros of multiplicity at least $k$ at all points in $mB^n\setminus\{\bm{0}\}$ and a zero of multiplicity at least $k-1$ at $\bm{0}$. Consequently, all derivative values of $P-P_0$ used in the definition
	of $\psi_{m,k}$ are zero, so $P_0$ has the same prescribed derivative
	data as $P$:
	\[
	\psi_{m,k}(P_0) = \alpha.
	\]	
	It remains to construct $P$. For a multi-index
	$\bm{i}=(i_1,\ldots,i_n)$, write
	$|\bm{i}|=i_1+\cdots+i_n$ and
	$\partial^{\bm{i}}
	=\partial^{|\bm{i}|}/(\partial x_1^{i_1}\cdots\partial x_n^{i_n})$.
	For each multi-index $\bm i$ with $|\bm i|\le k-1$ and each
	$\bm b=(b_1,\ldots,b_n)\in mB^n$, define
	\begin{equation*}
		P_{\bm{i},\bm{b}} = (x_1-b_1)^{i_1}\cdots (x_n-b_n)^{i_n} \prod_{t=1}^n \prod_{s \in \{0,1,\ldots,m\} \setminus \{b_t\}} (x_t - s)^k.
	\end{equation*}
	For every multi-index $\bm{j}$ with $|\bm j|\le k-1$, the following
	properties hold:
	\begin{itemize}
		\item[(1)] If $|\bm{j}| < |\bm{i}|$, then $\partial^{\bm j}P_{\bm{i},\bm{b}}$ vanishes on $mB^n$;
		\item[(2)] If $|\bm{j}| = |\bm{i}|$ and $\bm{j} \neq \bm{i}$, then $\partial^{\bm j}P_{\bm{i},\bm{b}}$ vanishes on $mB^n$;
		\item[(3)] If $\bm{j}=\bm{i}$, then $\partial^{\bm j}P_{\bm{i},\bm{b}}$ vanishes on $mB^n \setminus \{\bm{b}\}$ and is nonzero at $\bm{b}$.
	\end{itemize}
	Indeed, the value of the derivative in (3) at $\bm b$ is
	\[
	(\partial^{\bm i}P_{\bm i,\bm b})(\bm b)
	=
	i_1!\cdots i_n!
	\prod_{t=1}^n
	\prod_{s\in\{0,1,\ldots,m\}\setminus\{b_t\}}(b_t-s)^k\ne0.
	\]
	Order the multi-indices by nondecreasing total order and fix an arbitrary order on the points of $mB^n$. Use the induced lexicographic order on the pairs
	$(\bm i,\bm b)$ with $|\bm i|\le k-1$ and $\bm b\in mB^n$,
	excluding those with $\bm b=\bm{0}$ and $|\bm i|=k-1$, since
	$\psi_{m,k}$ records derivatives at $\bm{0}$ only up to order
	$k-2$. There are exactly $N$ such pairs. Let $A$ be the $N\times N$ matrix whose columns are indexed by $P_{\bm i,\bm b}$ and whose rows, in the same order, are indexed by
	\[
	F\longmapsto (\partial^{\bm j}F)(\bm c).
	\]
	Thus $A_{(\bm j,\bm c),(\bm i,\bm b)}=(\partial^{\bm j}P_{\bm i,\bm b})(\bm c)$. Properties (1)--(3) show that $A$ is lower triangular with nonzero diagonal entries. Hence $A\bm y=\alpha$ has a unique solution $\bm y=(y_{\bm i,\bm b})$, and the polynomial
	\begin{equation*}
		P = \sum_{(\bm i,\bm b)} y_{\bm{i},\bm{b}} P_{\bm{i},\bm{b}}
	\end{equation*}
	satisfies the required derivative conditions. This completes the proof.
\end{proof}

\begin{coro}\label{coro3.4}
	For each polynomial $P$, the polynomial $P_0$ in Lemma \ref{c1} is unique.
\end{coro}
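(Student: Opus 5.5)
Uniqueness will follow from the injectivity of $\psi_{m,k}$ in Lemma \ref{c3}. The meaning of ``unique'' is this: for a given $P$, any $P_0\in V_{m,k}$ such that $P-P_0$ has zeros of multiplicity at least $k$ at all points of $mB^n\setminus\{\bm 0\}$ and a zero of multiplicity at least $k-1$ at $\bm 0$ is uniquely determined. In the second part of Lemma \ref{c1} the requirement on $P-P_0$ is stronger, so it also falls under this statement. The degree condition $\deg P_0\le\deg P$ plays no role in uniqueness.

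Suppose $P_0,P_0'\in V_{m,k}$ both have the above property. Their difference $Q=P_0-P_0'$ lies in $V_{m,k}$, since $V_{m,k}$ is a subspace. It also equals $(P-P_0')-(P-P_0)$. Multiplicity at least $r$ at a point is equivalent to the vanishing of all derivatives of order less than $r$ there, and this condition is preserved under subtraction. Hence $Q$ has zeros of multiplicity at least $k$ on $mB^n\setminus\{\bm 0\}$ and a zero of multiplicity at least $k-1$ at $\bm 0$.

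These are exactly the derivative data that $\psi_{m,k}$ records: all derivatives of order $<k$ at nonzero points of $mB^n$ and all derivatives of order $<k-1$ at $\bm 0$. So $\psi_{m,k}(Q)=0$. Since $\psi_{m,k}$ is an isomorphism by Lemma \ref{c3}, it follows that $Q=0$, that is, $P_0=P_0'$.

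The one point to make explicit is the equivalence between ``zero of multiplicity at least $r$ at $\bm a$'' and the vanishing of all partial derivatives of order $<r$ at $\bm a$. This holds over $\mathbb{R}$ by Taylor expansion of $P(\bm x+\bm a)$, whose coefficients are the derivatives divided by factorials. With that, the argument is immediate and there is no real obstacle.
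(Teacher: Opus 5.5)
Your proof is correct and is essentially the paper's argument. The difference of two admissible choices lies in $V_{m,k}$ and inherits the multiplicity conditions, so it lies in the kernel of $\psi_{m,k}$, which is trivial by Lemma~\ref{c3}.
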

\begin{proof}
	Suppose that $P_0,P_1\in V_{m,k}$ are two possible choices in Lemma \ref{c1} for the same polynomial $P$. Since both $P-P_0$ and $P-P_1$ have zeros of multiplicity at least
	$k$ at all points in $mB^n\setminus\{\bm{0}\}$ and a zero of
	multiplicity at least $k-1$ at $\bm{0}$, their difference
	$P_1-P_0$ satisfies the same multiplicity conditions. Consequently, all derivatives of $P_1-P_0$ of order less than $k$
	vanish at all points in $mB^n\setminus\{\bm{0}\}$, and all derivatives of
	order less than $k-1$ vanish at $\bm{0}$. Thus $\psi_{m,k}(P_1-P_0)=\bm{0}$, and Lemma \ref{c3} gives
	$P_1=P_0$.
\end{proof}

We now prove Theorem \ref{main-2} \eqref{eq3} using
Lemmas \ref{c1} and \ref{c3}.

\begin{proof}[\bf{Proof of Theorem \ref{main-2} \eqref{eq3}}]
    Suppose that there exists a polynomial $P\in \mathbb{R}[x_1,\ldots,x_n]$ of degree $\deg P \le mn+(m+1)(k-1)-1$ having zeros of multiplicity at least $k$ at all points in $mB^n \setminus \{\bm{0}\}$, and a zero of multiplicity exactly $k-1$ at $\bm{0}$. By Lemma \ref{c1}, there exists a polynomial $P_0\in V_{m,k}$ such that $P-P_0$ has zeros of multiplicity at least $k$ at all points in $mB^n$. Clearly, $P_0\ne0$. On the other hand, $P_0=P-(P-P_0)$ has zeros of multiplicity at
    least $k$ at all points in $mB^n\setminus\{\bm{0}\}$, and a zero of
    multiplicity exactly $k-1$ at $\bm{0}$. Hence, the derivatives of
    $P_0$ of order less than $k$ at all points in
    $mB^n\setminus\{\bm{0}\}$ are identically zero, and derivatives of
    $P_0$ of order less than $k-1$ at $\bm{0}$ are also zero. This
    means $\psi_{m,k}(P_0)=\bm{0}$. By Lemma \ref{c3}, we obtain
    $P_0=0$, a contradiction. Therefore, $\deg P \ge mn+(m+1)(k-1)$. The lower bound is attained by taking $P\left(x_1,\ldots,x_n\right)=x_1^{k-1}(x_1-1)^{k-1}\cdots(x_1-m)^{k-1}\cdot\prod_{i=1}^{n}(x_i-1)\cdots(x_i-m)$.
\end{proof}

\begin{proof}[\bf{Proof of Corollary \ref{coro1.8}}]
The lower bound follows from Theorem \ref{main-2} \eqref{eq3}, and
equality is attained by the construction in its proof.
\end{proof}

To prove Theorem \ref{main-2} \eqref{eq4}, a key step is to establish Lemma \ref{c11}. For $k\ge 2$, let $U_{m,k}\subseteq V_{m,k}$ denote the subspace of all $(m,k)$-reduced polynomials which have zeros of multiplicity at least $k$ at all points in $mB^n \setminus \{\bm{0}\}$. Let $W_{m,k}\subseteq \mathbb{R}[x_1,\ldots,x_n]$ denote the subspace spanned by all polynomials
\begin{equation}\label{formula1}
    (x_1\cdots x_n)^m (x_1^{l_1} \cdots x_n^{l_n})^{m+1} (x_1^d+\cdots+x_n^d)
\end{equation}
for nonnegative integers $d,l_1,\ldots,l_n$ satisfying
\[
d+(m+1)(l_1+\cdots+l_n)=(m+1)(k-1)-1. 
\]
Define a linear map 
\[
\varphi_{m,k} :U_{m,k}\rightarrow \mathbb{R}[x_1,\ldots,x_n]
\]
by sending each $P\in U_{m,k}$ to its homogeneous component of degree $mn+(m+1)(k-1)-1$.
\begin{Lemma}\label{c11}
For $k\ge 2$ and $n\ge k-1$, the map $\varphi_{m,k}$ is injective and
 \[\varphi_{m,k} (U_{m,k})=W_{m,k}.\]
\end{Lemma}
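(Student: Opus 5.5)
The plan is a dimension count combined with a duality argument that expresses the top homogeneous component of any $P\in U_{m,k}$ in terms of the derivatives of $P$ at $\bm 0$. The steps, in order, are: compute $\dim U_{m,k}$, bound $\dim W_{m,k}$ above by the same number, prove $\varphi_{m,k}(U_{m,k})\subseteq W_{m,k}$, and prove injectivity. The last two together give the lemma.

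\emph{Dimension of $U_{m,k}$.} By Lemma \ref{c3}, $\psi_{m,k}$ is an isomorphism. The space $U_{m,k}$ is exactly the preimage of the tuples whose entries at points of $mB^n\setminus\{\bm 0\}$ vanish. The entries at $\bm 0$, namely the derivatives of order less than $k-1$, remain free, so
\[\dim U_{m,k}=M_{k-1}(n).\]

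\emph{Upper bound for $\dim W_{m,k}$.} The spanning polynomials \eqref{formula1} are indexed by $\bm l=(l_1,\ldots,l_n)$ with $j=|\bm l|\le k-2$, and then $d=(m+1)(k-1-j)-1$ is determined. Summing over $j$ gives
\[\sum_{j=0}^{k-2}\binom{n+j-1}{j}=\binom{n+k-2}{n}=M_{k-1}(n)\]
generators. Hence $\dim W_{m,k}\le \dim U_{m,k}$, and it suffices to prove $\varphi_{m,k}(U_{m,k})\subseteq W_{m,k}$ together with injectivity.

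\emph{Top-degree monomials.} A reduced monomial of degree $D=mn+(m+1)(k-1)-1$ has the form $x_1^{(m+1)l_1+r_1}\cdots x_n^{(m+1)l_n+r_n}$ with $r_i\le m$ and $|\bm l|<k$. Since $\sum r_i\le mn$, the degree condition forces $|\bm l|=k-1$ and $\sum r_i=mn-1$. So every such monomial is
\[M_{\bm l,i}=(x_1\cdots x_n)^m(x_1^{l_1}\cdots x_n^{l_n})^{m+1}/x_i,\qquad |\bm l|=k-1.\]
The generator \eqref{formula1} attached to $(\bm l',s)$, with $d=(m+1)s-1$, equals $\sum_j M_{\bm l'+s\bm e_j,\,j}$. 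Therefore membership in $W_{m,k}$ is a set of explicit linear relations among the coefficients $c_{\bm l,i}$ of $\varphi_{m,k}(P)$. Roughly, these say that $c_{\bm l,i}$ depends only on the pair $(\bm l-s\bm e_i,\,s)$ for the appropriate $s$, and is independent of the direction $i$.

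\emph{Coefficient formula, inclusion, and injectivity.} The core step is a multiplicity version of the Combinatorial Nullstellensatz coefficient formula, in the style of Kós--Rónyai. For a polynomial of degree at most $D$, the coefficient of each $M_{\bm l,i}$ is a fixed linear combination of the derivatives of order less than $k$ at the points of $mB^n$. This is obtained by Lagrange/Hermite interpolation in each variable on $\{0,\ldots,m\}$ with multiplicity. For $P\in U_{m,k}$ all terms at nonzero points vanish, so each $c_{\bm l,i}$ becomes a linear combination of the derivatives $\partial^{\bm i}P(\bm 0)$ with $|\bm i|\le k-2$. Two things must then be checked.
\begin{enumerate}
\item[(a)] The resulting weights depend on $(\bm l,i)$ only through the data $(\bm l-s\bm e_i,s)$. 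This gives the inclusion $\varphi_{m,k}(U_{m,k})\subseteq W_{m,k}$.
\item[(b)] The $M_{k-1}(n)\times M_{k-1}(n)$ matrix relating the derivatives $\partial^{\bm i}P(\bm 0)$ to the coordinates of $\varphi_{m,k}(P)$ in the generators of $W_{m,k}$ is nonsingular. I would pair $\bm i$ with $\bm l'=\bm i$ ordered by total degree and aim for triangularity.
\end{enumerate}
Given (b), $\varphi_{m,k}(P)=0$ forces all derivatives of $P$ of order less than $k-1$ at $\bm 0$ to vanish. Then $\psi_{m,k}(P)=0$, and Lemma \ref{c3} gives $P=0$. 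Combined with the dimension count, this also yields $\varphi_{m,k}(U_{m,k})=W_{m,k}$.

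\emph{Main obstacle.} I expect step (b) to be the hardest part: the diagonal weights are not obviously nonzero. The hypothesis $n\ge k-1$ must enter here, presumably to keep the power sums $x_1^d+\cdots+x_n^d$ for the relevant degrees independent. The weights are symmetric in the variables, so I would first symmetrize. I would then try to produce an explicit symmetric element of $U_{m,k}$ whose top component can be computed. The non-vanishing of the relevant coefficient would be handled by a generating-function and Lagrange-inversion computation.
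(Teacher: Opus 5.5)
\textbf{There is a genuine gap: the proposal sets up the framework correctly but never proves the two steps, (a) and (b), that carry the whole content of the lemma.}

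Your setup is sound. This includes $\dim U_{m,k}=M_{k-1}(n)$, the generator count giving $\dim W_{m,k}\le\dim U_{m,k}$, and the description of the top-degree reduced monomials $M_{\bm l,i}$. It also includes the fact that for $P\in U_{m,k}$ every top coefficient is a linear combination of the $\partial^{\bm i}P(\bm 0)$ with $|\bm i|\le k-2$; this is immediate from Lemma~\ref{c3}, so no Hermite interpolation is needed.

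For (a), you would have to verify a system of linear relations on the top coefficients for an \emph{arbitrary} $P\in U_{m,k}$. The relations themselves are not identified correctly. The monomial $M_{\bm l,i}$ occurs in exactly the $l_i$ generators indexed by $(\bm l-s\bm e_i,s)$ with $1\le s\le l_i$. So membership in $W_{m,k}$ means $c_{\bm l,i}=\sum_{s=1}^{l_i}\alpha_{\bm l-s\bm e_i,s}$ for some scalars $\alpha$. In particular $c_{\bm l,i}=0$ whenever $l_i=0$, i.e.\ $(x_1\cdots x_n)^m$ must divide $\varphi_{m,k}(P)$. The paper needs a dedicated computation (Claim~\ref{l33}) to get this divisibility even for one explicit symmetric polynomial, and you give no mechanism for general $P$.

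For (b), you concede that you cannot show the diagonal weights are nonzero. One symmetric element plus a Lagrange-inversion computation certifies at most the single entry for the generator $(x_1\cdots x_n)^m p_{(m+1)(k-1)-1}(\bm x)$. It does not give nonsingularity of an $M_{k-1}(n)\times M_{k-1}(n)$ matrix.

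The missing ideas are two reductions, both used in the paper.
\begin{enumerate}
\item[(1)] \emph{Reverse the inclusion.} Prove $W_{m,k}\subseteq\varphi_{m,k}(U_{m,k})$ instead. Combined with $\dim W_{m,k}=\dim U_{m,k}$, this gives both injectivity and $\varphi_{m,k}(U_{m,k})=W_{m,k}$, so (a) never has to be proved for general $P$. This equality of dimensions needs the generators \eqref{formula1} to be linearly independent (Lemma~\ref{fact3}, one place where $n\ge k-1$ is used), not just your upper bound.
\item[(2)] \emph{Induct on $k$.} By Lemma~\ref{fact2}, $x_j(x_j-1)\cdots(x_j-m)\,U_{m,k-1}\subseteq U_{m,k}$, and this multiplies top components by $x_j^{m+1}$. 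Hence every generator with $l_1+\cdots+l_n\ge 1$ lies in the image by the induction hypothesis.
\end{enumerate}
Only the generator with $d=(m+1)(k-1)-1$ is then left, and for it a single element suffices (Lemma~\ref{p1}). The paper takes the reduction $P_0$ of $\prod_i\bigl((x_i-1)^{\underline m}\bigr)^k$. It shows the top component lies in $W_{m,k}$ via Claim~\ref{l33} and the power-sum expansion of Lemmas~\ref{c31}--\ref{c32}, and evaluates the relevant coefficient by Lagrange inversion (Claim~\ref{l34}). Your last paragraph points toward this final ingredient, but without the induction and the reversed inclusion it closes neither (a) nor (b).
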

The proof of Lemma \ref{c11} is given in Section \ref{Sec4} and relies
on auxiliary results established in Sections \ref{Sec5}--\ref{Sec7}.
We next use the lemma to prove Theorem \ref{main-2} \eqref{eq4}.
\begin{proof}[\bf{Proof of Theorem \ref{main-2} \eqref{eq4}}]
Let $k\ge 2$ and $n\ge k-1$.  Let $P\in\mathbb{R}[x_1,\ldots,x_n]$ be a polynomial having zeros of multiplicity at least $k$ at all points in $mB^n \setminus \{\bm{0}\}$, and a zero of multiplicity at most $k-2$ at $\bm{0}$. By Lemma \ref{c1}, there exists a polynomial $P_0\in V_{m,k}$ with $\deg P_0\le \deg P$ such that $P-P_0$ has zeros of multiplicity at least $k$ at
    all points in $mB^n \setminus \{\bm{0}\}$ and a zero of multiplicity at least $k-1$ at $\bm{0}$. Since both $P$ and $P-P_0$ have zeros of multiplicity at least $k$ at all points in $mB^n \setminus \{\bm{0}\}$, so does $P_0$. Hence $P_0\in U_{m,k}$.
    Clearly, $P_0\ne0$.
    By Lemma \ref{c11}, we have $\varphi_{m,k}(P_0)\ne 0$, i.e., the homogeneous component of $P_0$ of degree $mn+(m+1)(k-1)-1$ is nonzero. Thus, $\deg P\ge \deg P_0\ge  mn+(m+1)(k-1)-1$.
It remains to show that this lower bound is attained.  For any $l\in\{0,1,\ldots,k-2\}$, consider the polynomial
    \begin{equation*}
        P^{*}=x_1^{l}\cdot \prod_{i=1}^{n}(x_{i} - 1)^k \cdots (x_{i} - m)^k.
    \end{equation*}
Then $P^{*}$ has zeros of multiplicity at least $k$ at all points in $mB^n \setminus \{\bm{0}\}$ and a zero of multiplicity exactly $l$ at $\bm{0}$. By Lemma \ref{c1}, there exists $P_0^*\in V_{m,k}$ satisfying the same multiplicity conditions as $P^*$. Consequently, $\deg P_0^*\ge mn+(m+1)(k-1)-1$. Note that $P_0^*\in V_{m,k}$. The definition of $(m,k)$-reduced polynomials implies the reverse inequality: $\deg P_0^*\le mn+(m+1)(k-1)-1$. Thus $\deg P_0^*=mn+(m+1)(k-1)-1$.
\end{proof}

\section{The Highest-Degree Component Map}\label{Sec4}
This section proves Lemma \ref{c11} using Lemma \ref{p1}, whose proof is given in the next section.
For $k\ge 2$, recall that $U_{m,k}\subseteq V_{m,k}$ denotes the subspace of all $(m,k)$-reduced polynomials which have zeros of multiplicity at least $k$ at all points in $mB^n \setminus \{\bm{0}\}$. Before proceeding to the proof of Lemma \ref{c11}, we establish several preliminary lemmas.

\begin{Lemma}\label{fact1}
	For $k\ge 2$, we have $\dim U_{m,k}=M_{k-1}(n)$.
\end{Lemma}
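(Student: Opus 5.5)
The idea is to identify $U_{m,k}$ with the kernel of a coordinate projection composed with the isomorphism $\psi_{m,k}$ of Lemma \ref{c3}, and then count dimensions.

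First we recall how $\psi_{m,k}$ is built. It records:
\begin{itemize}
\item all derivatives of order less than $k$ at the points of $mB^n\setminus\{\bm 0\}$, which is $\bigl((m+1)^n-1\bigr)M_k(n)$ coordinates;
\item all derivatives of order less than $k-1$ at $\bm 0$, which is $M_{k-1}(n)$ coordinates.
\end{itemize}
Split $\mathbb{R}^N=\mathbb{R}^{N_1}\oplus\mathbb{R}^{N_2}$ accordingly, with $N_1=\bigl((m+1)^n-1\bigr)M_k(n)$ and $N_2=M_{k-1}(n)$. Let $\pi_1:\mathbb{R}^N\to\mathbb{R}^{N_1}$ be the projection onto the first block.

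Next we identify $U_{m,k}$. A polynomial $P\in V_{m,k}$ has zeros of multiplicity at least $k$ at every point of $mB^n\setminus\{\bm 0\}$ exactly when all its partial derivatives of order less than $k$ vanish at those points. In other words, $P\in U_{m,k}$ if and only if $\pi_1(\psi_{m,k}(P))=0$. This gives
\[
U_{m,k}=\psi_{m,k}^{-1}\bigl(\ker\pi_1\bigr)=\psi_{m,k}^{-1}\bigl(\{0\}\oplus\mathbb{R}^{N_2}\bigr).
\]

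Finally we count dimensions. By Lemma \ref{c3}, $\psi_{m,k}$ is a linear isomorphism, so it carries $U_{m,k}$ bijectively onto $\{0\}\oplus\mathbb{R}^{N_2}$. Hence
\[
\dim U_{m,k}=N_2=M_{k-1}(n).
\]

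The only point needing care is the equivalence between "zero of multiplicity at least $k$ at $\bm a$" (defined via the lowest-degree monomial of the shifted polynomial) and "all partial derivatives of order less than $k$ vanish at $\bm a$". This holds over $\mathbb{R}$ by Taylor expansion at $\bm a$: the coefficient of $(x-\bm a)^{\bm i}$ is $\partial^{\bm i}P(\bm a)/\bm i!$. So the argument is essentially immediate, with no real obstacle.
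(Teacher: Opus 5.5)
Your proposal is correct and follows the paper's proof essentially verbatim: $U_{m,k}$ is the preimage under the isomorphism $\psi_{m,k}$ of the coordinate subspace where the first $\bigl((m+1)^n-1\bigr)M_k(n)$ entries vanish, and that subspace has dimension $M_{k-1}(n)$. Your added remark that, over $\mathbb{R}$, multiplicity at least $k$ is equivalent to the vanishing of all partial derivatives of order less than $k$ (by Taylor expansion) is a step the paper leaves implicit.
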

\begin{proof} 
	Recall that $N=\bigl((m+1)^n-1\bigr)M_k(n)+M_{k-1}(n)$. 
	By definition, $U_{m,k}$ consists exactly of those polynomials in $V_{m,k}$ whose image under the map $\psi_{m,k}:V_{m,k}\rightarrow \mathbb{R}^N$ has its first $\left(\left(m+1\right)^n-1\right)\cdot M_k(n)$ entries equal to zero. The subspace consisting of such $N$-tuples has dimension  $M_{k-1}(n)$. Since $\psi_{m,k}$ is an isomorphism by Lemma \ref{c3}, we have $\dim U_{m,k}=M_{k-1}(n)$.
\end{proof}

\begin{Lemma}\label{fact2}
    Let $ k\ge 3$. Then for each $j\in\left\{1,\ldots,n\right\}$ and every polynomial $P\in U_{m,k-1}$, we have $x_j(x_j-1)\cdots(x_j-m)\cdot P\in U_{m,k}$.
\end{Lemma}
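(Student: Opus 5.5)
Set $Q=x_j(x_j-1)\cdots(x_j-m)\cdot P=(x_j)^{\underline{m+1}}P$ with $P\in U_{m,k-1}$. We must verify the two defining properties of $U_{m,k}$: that $Q$ is $(m,k)$-reduced, and that $Q$ has zeros of multiplicity at least $k$ at every point of $mB^n\setminus\{\bm 0\}$.

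\emph{Multiplicity.} By hypothesis $P$ vanishes to order at least $k-1$ at every point of $mB^n\setminus\{\bm 0\}$. The factor $(x_j)^{\underline{m+1}}$ vanishes to order at least $1$ at every point of $mB^n$, since the $j$-th coordinate lies in $\{0,1,\ldots,m\}$. Multiplicities of zeros add under multiplication: the lowest-degree homogeneous parts of the translated factors multiply to a nonzero form. Hence $Q$ vanishes to order at least $k$ on $mB^n\setminus\{\bm 0\}$.

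\emph{Degree bound.} Since $\deg P\le mn+(m+1)(k-2)-1$, we get $\deg Q\le mn+(m+1)(k-1)-1$, as required.

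\emph{Monomial condition.} Every monomial of $Q$ has the form $x_j^{t}\mu$ with $0\le t\le m+1$ and $\mu$ a monomial of $P$. Suppose such a monomial were divisible by $x_{i_1}^{m+1}\cdots x_{i_k}^{m+1}$. Write the exponent of $x_i$ in $\mu$ as $(m+1)l_i+r_i$ with $0\le r_i\le m$. Because $\mu$ is not divisible by any product of $k-1$ factors $x_i^{m+1}$, we have $\sum l_i\le k-2$. Multiplying by $x_j^t$ with $t\le m+1$ raises $\lfloor \text{exponent}/(m+1)\rfloor$ only in the variable $x_j$, and by at most $1$ there. So the total $\sum_i\lfloor\cdot\rfloor$ for $x_j^t\mu$ is at most $k-1$. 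A monomial is divisible by $x_{i_1}^{m+1}\cdots x_{i_k}^{m+1}$ exactly when this total is at least $k$, so divisibility is impossible. Note that cancellation among monomials can only remove terms, so it does not affect this argument.

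The only step needing care is this floor-counting bookkeeping. The rest is immediate, so together these show $Q\in U_{m,k}$.
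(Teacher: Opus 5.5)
Your proof is correct and follows the paper's argument: you check the degree bound, the monomial-divisibility condition, and the multiplicity condition in the same way the paper does. Your count of $\sum_i\lfloor e_i/(m+1)\rfloor$ just spells out the paper's parenthetical remark that a forbidden monomial in $(x_j)^{\underline{m+1}}P$ would force a monomial of $P$ divisible by a product of $k-1$ factors $x_i^{m+1}$.
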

\begin{proof}
    Since $P\in U_{m,k-1}\subseteq V_{m,k-1}$ is $(m,k-1)$-reduced, we have $\deg P\le mn+(m+1)(k-2)-1$, and no monomial of $P$ is divisible by $x_{i_1}^{m+1}\cdots x_{i_{k-1}}^{m+1}$. It follows that $x_j(x_j-1)\cdots(x_j-m)\cdot P$ has degree at most $mn+(m+1)(k-1)-1$ and contains no monomial divisible by $x_{i_1}^{m+1}\cdots x_{i_{k}}^{m+1}$ (otherwise $P$ would contain a monomial divisible by $x_{i_1}^{m+1}\cdots x_{i_{k-1}}^{m+1}$). Hence $x_j(x_j-1)\cdots(x_j-m)\cdot P\in V_{m,k}$.
    Moreover, since $P\in U_{m,k-1}$ has zeros of multiplicity at least $k-1$ at all points in $mB^n \setminus \{\bm{0}\}$, the polynomial $x_j(x_j-1)\cdots(x_j-m)\cdot P$ has zeros of multiplicity at least $k$ at these points. Therefore  $x_j(x_j-1)\cdots(x_j-m)\cdot P\in U_{m,k}$.
\end{proof}

Recall that $W_{m,k}\subseteq \mathbb{R}[x_1,\ldots,x_n]$ is spanned by all polynomials
\[
    (x_1\cdots x_n)^m (x_1^{l_1} \cdots x_n^{l_n})^{m+1} (x_1^d+\cdots+x_n^d),
\]
where $d, l_1, \ldots, l_n$ are nonnegative integers satisfying
\[
d + (m+1)(l_1+\cdots+l_n) = (m+1)(k-1)-1,
\]
as in \eqref{formula1}. Every polynomial in $W_{m,k}$ is homogeneous of degree $mn+(m+1)(k-1)-1$. 
\begin{Lemma}\label{fact3}
	For $k\ge 2$ and $n\ge k-1$, the polynomials in \eqref{formula1} form a basis of $W_{m,k}$.
\end{Lemma}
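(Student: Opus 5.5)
The plan is to prove linear independence directly. By definition the polynomials in \eqref{formula1} span $W_{m,k}$, so only independence needs proof.

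\textbf{Step 1: normalise the indexing.} The constraint $d+(m+1)(l_1+\cdots+l_n)=(m+1)(k-1)-1$ forces $|\bm l|:=l_1+\cdots+l_n\le k-2$. It also forces $d=(m+1)s-1$ with $s=k-1-|\bm l|\ge 1$. So the generators are indexed exactly by tuples $\bm l$ with $|\bm l|\le k-2$, and $d$ is determined by $\bm l$.

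\textbf{Step 2: remove the common factor and substitute.} Dividing by the common factor $(x_1\cdots x_n)^m$, it suffices to show that the polynomials
\[
Q_{\bm l}=(x_1^{l_1}\cdots x_n^{l_n})^{m+1}\sum_{i=1}^n x_i^{(m+1)s-1},\qquad s=k-1-|\bm l|,
\]
are linearly independent. Every monomial of $Q_{\bm l}$ has the form $x_i^{(m+1)t-1}\prod_{j\ne i}x_j^{(m+1)a_j}$. Since $m+1\ge2$, exactly one exponent is $\not\equiv 0 \pmod{m+1}$, so the index $i$ can be read off the monomial.

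Suppose $\sum_{\bm l}c_{\bm l}Q_{\bm l}=0$. Collect, for each fixed $i$, the monomials in which $x_i$ carries the exponent $\equiv -1$. Multiply these by $x_i$ and substitute $y_j=x_j^{m+1}$, which is injective on monomials. The relation then becomes, for every $i$,
\[
\sum_{\bm l} c_{\bm l}\, y^{\bm l}\, y_i^{\,k-1-|\bm l|}=0 \quad\text{in } \mathbb{R}[y_1,\ldots,y_n].
\]

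\textbf{Step 3: pass to one auxiliary polynomial.} Introduce a new variable $T$ and set
\[
F(y,T)=\sum_{\bm l}c_{\bm l}\,y^{\bm l}\,T^{k-1-|\bm l|}.
\]
This is homogeneous of total degree $k-1$, and every term has $T$-degree at least $1$. The relations in Step 2 say $F(y,y_i)=0$ for all $i$. Hence $F$ is divisible by each of the pairwise coprime linear forms $T-y_i$, and therefore by $\prod_{i=1}^n(T-y_i)$, which has degree $n$.

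\textbf{Step 4: use $n\ge k-1$.} If $n>k-1$, the degree count forces $F=0$. If $n=k-1$, then $F=c\prod_{i=1}^n(T-y_i)$ for a constant $c$. The right side contains the $T$-free term $c(-1)^n y_1\cdots y_n$, but $F$ has no $T$-free term, so $c=0$ and again $F=0$. Since the monomials $y^{\bm l}T^{k-1-|\bm l|}$ are distinct for distinct $\bm l$, $F=0$ gives all $c_{\bm l}=0$, which proves independence.

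The only delicate point is the bookkeeping in Step 2: one must check that the grouping by the distinguished index $i$, followed by the substitution $y_j=x_j^{m+1}$, loses no information. This holds because the residue-class argument identifies $i$ uniquely from each monomial, and the substitution is injective on monomials. The hypothesis $n\ge k-1$ enters only in Step 4. For $n<k-1$ the argument breaks down: $F=\prod_{i=1}^n(T-y_i)\cdot T^{k-1-n}$ gives a nontrivial relation.
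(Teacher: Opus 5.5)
Your proof is correct. After the first reduction, which you share with the paper, it follows a genuinely different route.

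Both arguments divide out $(x_1\cdots x_n)^m$ and use the fact that every monomial of a generator has exactly one exponent $\equiv m \pmod{m+1}$. The paper then argues extremally. It takes the least $d^*$ carrying a nonzero coefficient, and uses $l_1+\cdots+l_n\le k-2<n$ to find an index $i$ with $l_i=0$ for one of these generators. It then notes that the monomial $x_i^{d^*}\prod_{j\ne i}x_j^{(m+1)l_j}$ comes from exactly one generator, so its coefficient must vanish.

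You instead split the relation according to the distinguished variable and pass to $y_j=x_j^{m+1}$. You then package the $n$ resulting identities into the single form $F(y,T)$, which vanishes at $T=y_1,\ldots,y_n$. Divisibility by $\prod_i(T-y_i)$ and a degree count finish the job.

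The paper's argument is shorter and only tracks one monomial. Yours buys more, because every step is reversible. It describes the full space of linear relations among the generators as $\{T\prod_{i=1}^n(T-y_i)\,G\}$ with $G$ homogeneous of degree $k-2-n$. That space has dimension $\binom{k-2}{n}$, which in particular shows the hypothesis $n\ge k-1$ is sharp.

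Your Step~4 can be streamlined. Every term of $F$ has positive $T$-degree, so $T\mid F$. Since $T$ is coprime to each $T-y_i$, the polynomial $T\prod_{i}(T-y_i)$ of degree $n+1>k-1$ divides $F$. This handles the cases $n>k-1$ and $n=k-1$ at once.
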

\begin{proof}
	Since $W_{m,k}$ is spanned by all polynomials in \eqref{formula1}, it suffices to show that
	these polynomials are linearly independent. Dividing each polynomial by $(x_1\cdots x_n)^m$, it remains to prove the linear independence of the polynomials
	\begin{equation*}
		Q_{d,l_1,\ldots,l_n}= (x_1^{l_1} \cdots x_n^{l_n})^{m+1} (x_1^d+\cdots+x_n^d),
	\end{equation*}
	where $d,l_1,\ldots,l_n$ are nonnegative integers satisfying $d+(m+1)(l_1+\cdots+l_n)=(m+1)(k-1)-1$. In particular, $d \equiv m \pmod{m+1}$.
	Hence, every monomial of $Q_{d,l_1,\ldots,l_n}$ contains exactly one variable whose exponent is congruent to $m$ modulo $m+1$, and this exponent is at least $d$. 
	Note that $l_1+\cdots+l_n\le k-2 <n$, so there
	exists an index $i$ such that $l_i=0$. For such $i$, the monomial
	$x_i^d\cdot (x_1^{l_1} \cdots x_{i-1}^{l_{i-1}}   x_{i+1}^{l_{i+1}} \cdots x_n^{l_n})^{m+1}$
	appears in $Q_{d,l_1,\ldots,l_n}$, and the exponent of $x_i$ in this monomial is exactly $d$.
	Suppose that there is a nontrivial linear relation
	\begin{equation*}
		\sum_{\left(d,l_1,\ldots,l_n\right)}\lambda_{d,l_1,\ldots,l_n}Q_{d,l_1,\ldots,l_n}=0
	\end{equation*}
	with coefficients $\lambda_{d,l_1,\ldots,l_n}\in \mathbb{R}$ not all zero. Let $d^{*}$ be the minimum value of $d$ for which some coefficient $\lambda_{d,l_1,\ldots,l_n}$ is nonzero. Note that $d^{*} \equiv m \pmod{m+1}$.
	Fix an index $i$ such that $\lambda_{d,l_1,\ldots,l_n}\ne 0$ for some $\left(d,l_1,\ldots,l_n\right)$ with $d=d^{*}$ and $l_i=0$.
	Consider all monomials of polynomials $Q_{d,l_1,\ldots,l_n}$ with $\lambda_{d,l_1,\ldots,l_n} \ne 0$ for which $x_i$ has exponent $d^{*}$. Such monomials can only arise from those polynomials
	$Q_{d,l_1,\ldots,l_n}$ with $d=d^*$ and $l_i=0$. Moreover, each such polynomial contributes exactly one monomial in which $x_i$ has exponent $d^*$, namely
	\[
	x_i^{d^*} (x_1^{l_1}\cdots x_{i-1}^{l_{i-1}}   x_{i+1}^{l_{i+1}} \cdots x_n^{l_n} )^{m+1} .
	\]
	Distinct tuples $(l_1,\ldots,l_n)$ with $l_i=0$ yield distinct displayed monomials, so these monomials cannot cancel. This contradiction shows that all
	coefficients $\lambda_{d,l_1,\ldots,l_n}$ must vanish. Hence, the polynomials in \eqref{formula1} are linearly independent.
\end{proof}
\begin{Lemma}\label{c8}
	For $k\ge 2$ and $n\ge k-1$, we have $\dim W_{m,k}=\dim U_{m,k}$.
\end{Lemma}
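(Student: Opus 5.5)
By Lemma \ref{fact1}, $\dim U_{m,k}=M_{k-1}(n)=\binom{n+k-2}{n}$, which counts the $n$-tuples $(l_1,\ldots,l_n)$ of nonnegative integers with $l_1+\cdots+l_n\le k-2$. By Lemma \ref{fact3}, $\dim W_{m,k}$ is the number of admissible tuples $(d,l_1,\ldots,l_n)$ indexing the polynomials in \eqref{formula1}. So the plan is to exhibit a bijection between these two index sets.

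Fix an admissible tuple $(d,l_1,\ldots,l_n)$. Then
\[
d+(m+1)L=(m+1)(k-1)-1, \qquad L=l_1+\cdots+l_n .
\]
Reducing modulo $m+1$ gives $d\equiv m\pmod{m+1}$, so we may write $d=(m+1)t+m$ with $t\ge 0$. Substituting yields $(m+1)(t+L)+m=(m+1)(k-1)-1$, hence $t+L=k-2$. In particular $L\le k-2$, and $d$ is determined by $L$ through $t=k-2-L$.

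Conversely, every tuple $(l_1,\ldots,l_n)$ with $L\le k-2$ arises, by setting $d=(m+1)(k-2-L)+m\ge m\ge 0$. Thus the map $(d,l_1,\ldots,l_n)\mapsto(l_1,\ldots,l_n)$ is a bijection from the admissible tuples onto the set of $n$-tuples with sum at most $k-2$, i.e.\ with sum less than $k-1$. That set has $M_{k-1}(n)$ elements by the definition of $M_{k-1}(n)$.

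Combining this count with Lemmas \ref{fact3} and \ref{fact1} gives $\dim W_{m,k}=M_{k-1}(n)=\dim U_{m,k}$. There is no real obstacle here. The only point needing care is that distinct admissible tuples give distinct basis elements, and this is exactly the linear independence already established in Lemma \ref{fact3}; that lemma is where the hypothesis $n\ge k-1$ enters.
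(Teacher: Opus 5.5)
Your proposal is correct and follows essentially the same route as the paper: you use Lemma~\ref{fact3} to identify $\dim W_{m,k}$ with the number of admissible tuples $(d,l_1,\ldots,l_n)$. You then note that $d$ is determined by $L=l_1+\cdots+l_n\le k-2$, so the count is $M_{k-1}(n)$, and you conclude with Lemma~\ref{fact1}; your explicit check that every tuple with $L\le k-2$ yields a valid $d=(m+1)(k-2-L)+m\ge 0$ is a detail the paper leaves implicit.
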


\begin{proof}
	By Lemma \ref{fact3}, the dimension of $W_{m,k}$ equals the number of
	polynomials in \eqref{formula1}. Equivalently, it is the number of nonnegative integer tuples $\left(d,l_1,\ldots,l_n\right)$ satisfying
	\[
	d+(m+1)(l_1+\cdots+l_n)=(m+1)(k-1)-1.
	\] 
	This equality implies that $l_1+\cdots+l_n <k-1$. Hence, there are exactly $M_{k-1}(n)$ choices for $(l_1,\ldots,l_n)$. For each such choice, the integer $d$ is uniquely determined by $d=(m+1)(k-1)-1-(m+1)(l_1+\cdots+l_n)$. Therefore,
	\[
	\dim W_{m,k}=M_{k-1}(n)=\dim U_{m,k},
	\]
	where the last equality follows from Lemma \ref{fact1}.
\end{proof}

Recall that the map 
\[
\varphi_{m,k} :U_{m,k}\rightarrow \mathbb{R}[x_1,\ldots,x_n]
\]
sends each $P\in U_{m,k}$ to its homogeneous component of degree $mn+(m+1)(k-1)-1$.

\begin{Lemma}\label{p1}
    For $k\ge 2$ and $n\ge k-1$, there exists a polynomial $P\in U_{m,k}$ such that its homogeneous component $\varphi_{m,k} (P)$ of degree $mn+(m+1)(k-1)-1$ satisfies $\varphi_{m,k} (P)\in W_{m,k}$ and the following property: when expressing $\varphi_{m,k} (P)$ in terms of the basis of $W_{m,k}$ given in \eqref{formula1}, the coefficient of
    \begin{equation}\label{basisele}
    	(x_1\cdots x_n)^{m}   (x_1^{(m+1)(k-1)-1}+\cdots+x_n^{(m+1)(k-1)-1})
    \end{equation}
    is nonzero.
\end{Lemma}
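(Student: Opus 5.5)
The plan is to build $P$ explicitly as the reduced form of a symmetric polynomial. Start from
\[
P^{*}=\prod_{i=1}^{n}(x_i-1)^k(x_i-2)^k\cdots(x_i-m)^k ,
\]
which has zeros of multiplicity at least $k$ at every point of $mB^n\setminus\{\bm 0\}$ and satisfies $P^*(\bm 0)\neq 0$. Lemma \ref{c1} gives $P_0\in V_{m,k}$ such that $P^*-P_0$ vanishes to order $k$ on $mB^n\setminus\{\bm 0\}$ and to order $k-1$ at $\bm 0$. Hence $P_0\in U_{m,k}$. Moreover $P_0\neq 0$: otherwise $P^*$ itself would vanish to order $k-1\ge 1$ at $\bm 0$, contradicting $P^*(\bm 0)\neq0$.

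Next, $P_0$ is symmetric. Permuting the variables preserves $V_{m,k}$, the set $mB^n$, the point $\bm 0$ and $P^*$. So for any permutation $\sigma$, the polynomial $\sigma P_0$ is also an admissible reduced form of $P^*$, and Corollary \ref{coro3.4} gives $\sigma P_0=P_0$. We take $P=P_0$.

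The second step describes the top component. Write $D=mn+(m+1)(k-1)-1$ and expand each exponent of a monomial of $P_0$ as $(m+1)l_i+r_i$ with $0\le r_i\le m$. Reducedness forces $\sum l_i\le k-1$, so $\sum r_i\le mn$. A degree count then shows that every monomial of degree $D$ has $\sum_i l_i=k-1$ and has $r_j=m-1$ for exactly one index $j$, with $r_i=m$ for all $i\ne j$. Thus $\varphi_{m,k}(P_0)$ lies in the span of the monomials
\[
(x_1\cdots x_n)^m\,x_j^{-1}\prod_i x_i^{(m+1)l'_i},\qquad \textstyle\sum_i l'_i=k-1 .
\]
Comparing with \eqref{formula1}, $\varphi_{m,k}(P_0)\in W_{m,k}$ holds exactly when the coefficient $c_{j,\bm l'}$ depends only on the vector obtained from $\bm l'$ by setting its $j$-th entry to $0$. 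In that case the coefficient of the basis element with $\bm l=\bm 0$ in \eqref{basisele} equals the coefficient of the single monomial
\[
x_1^{(m+1)(k-1)+m-1}(x_2\cdots x_n)^m
\]
in $P_0$.

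I would prove this invariance (first claim) by exploiting the vanishing conditions one variable at a time. Fix all coordinates except $x_j$ at points of $\{0,\dots,m\}$, not all zero. The vanishing of the derivatives of order $<k$ then gives univariate identities in $x_j$ modulo $\bigl((x_j)^{\underline{m+1}}\bigr)^{k}$-type ideals. Reading off top-degree coefficients in these identities should show that moving weight between $l'_j$ and the ``free'' part does not change $c_{j,\bm l'}$. Symmetry of $P_0$ reduces this to $j=1$. The condition $n\ge k-1$ enters here exactly as in Lemma \ref{fact3}: some coordinate with $l_i=0$ is always available.

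The main obstacle is the second claim, namely showing that the coefficient of $x_1^{(m+1)(k-1)+m-1}(x_2\cdots x_n)^m$ in $P_0$ is nonzero. My plan is to follow the reduction algorithm of Lemma \ref{c1} restricted to monomials whose exponents in $x_2,\dots,x_n$ stay at most $m$. On this part the relevant subtractions are essentially univariate in $x_1$: multiples of $\bigl((x_1)^{\underline{m+1}}\bigr)^{k}$ and of the $V_2$-generators $\prod_i(x_i-1)^{\underline m}\bigl((x_1)^{\underline{m+1}}\bigr)^{k-1}$. The target coefficient thus becomes the coefficient in a one-variable division or interpolation problem: Hermite interpolation of $\prod_{s=1}^m(x_1-s)^k$ at the nodes $0,\dots,m$ with order $k$ at nonzero nodes and order $k-1$ at $0$, weighted by the contributions from the other variables. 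I expect this coefficient to be expressible as $[t^{k-1}]$ of a power of an explicit series, and then evaluated by Lagrange inversion to a closed form that is visibly nonzero, for instance a product of nonzero rational numbers with a fixed sign. If the bookkeeping of the other variables does not decouple cleanly, I would fall back on first proving that the coefficient is independent of $n$, and then computing it in the case $n=1$ directly.
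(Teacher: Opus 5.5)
Your setup is the same as the paper's: the same $P^{*}$, the same reduced $P_0\in U_{m,k}$, symmetry from Corollary \ref{coro3.4}, and the same degree count for the top component. The proof breaks where you describe $W_{m,k}$ and identify the target coefficient. Write $w_{\bm l}$ for the basis element \eqref{formula1} with exponent vector $\bm l$, where $|\bm l|\le k-2$. In your indexing, $w_{\bm l}$ contains the monomial indexed by $(j,\bm l+(k-1-|\bm l|)\bm e_j)$ for each $j$. So if $\varphi_{m,k}(P_0)=\sum_{\bm l}\alpha_{\bm l}w_{\bm l}$, then $c_{j,\bm l'}=\sum_{t=0}^{l'_j-1}\alpha_{\bm l'-(l'_j-t)\bm e_j}$. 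Two consequences follow.

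First, membership in $W_{m,k}$ forces $c_{j,\bm l'}=0$ whenever $l'_j=0$. This is divisibility by $(x_1\cdots x_n)^m$, the paper's Claim \ref{l33}, proved via the truncated product expansion of Lemma \ref{P_1}. Your criterion has no such condition, and it is wrong in other respects too. For $m=1$, $k=3$, $n=2$ it only requires the coefficients of $x_1^4x_2$ and $x_1x_2^4$ to agree, so it would admit $x_1^5$. But $W_{1,3}$ is cut out by the vanishing of the coefficients of $x_1^5$ and $x_2^5$, together with $[x_1^4x_2]-[x_1^3x_2^2]=[x_1x_2^4]-[x_1^2x_2^3]$ (brackets denoting coefficients). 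Second, the monomial $x_1^{(m+1)(k-1)+m-1}(x_2\cdots x_n)^m$ has coefficient $\alpha_{\bm 0}+\alpha_{\bm e_1}+\cdots+\alpha_{(k-2)\bm e_1}$, not $\alpha_{\bm 0}$. This is because $(x_1\cdots x_n)^m x_1^{(m+1)t}p_{(m+1)(k-1-t)-1}(\bm x)$ contains it for every $0\le t\le k-2$.

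More seriously, the nonvanishing claim you plan to prove is false for $k\ge3$. Every monomial of $P_0$ has degree at most $mk$ in each variable. This follows from Lemma \ref{P_1}, or from the fact that each subtraction in Lemma \ref{c1} uses a product of univariate factors whose leading monomial is the one being cancelled. Your target monomial, however, has $x_1$-degree $mk+k-2>mk$. Concretely, for $m=1$, $k=3$, $n=2$ one has $P_0=(x_1-1)^3(x_2-1)^3-x_1(x_1-1)^2x_2(x_2-1)^2$ and $\varphi_{1,3}(P_0)=-x_1^3x_2^2-x_1^2x_2^3$. The coefficient of $x_1^4x_2$ is $0$, while the coefficient of the basis element $x_1x_2(x_1^3+x_2^3)$ is $1$. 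So the univariate Hermite-interpolation computation you propose would return zero.

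What is missing is a correct extraction of $\alpha_{\bm 0}$. The paper shows in Lemma \ref{c32} that, once divisibility is known, $\alpha_{\bm 0}$ is the coefficient of $p_{(m+1)(k-1)-1}(\bm x)$ in the power-sum expansion of $\varphi_{m,k}(P_0)/(x_1\cdots x_n)^m$. Combined with the product form \eqref{for7} and Lemma \ref{lem7.1}, this becomes the alternating sum $\sum(-1)^{t-1}b_0^{n-t}b_{\lambda_1}\cdots b_{\lambda_t}$. The sum runs over compositions $(\lambda_1,\ldots,\lambda_t)$ of $(m+1)(k-1)-1$ with $\lambda_1\equiv m$ and $\lambda_2,\ldots,\lambda_t\equiv 0\pmod{m+1}$. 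Your single monomial is only its $t=1$ term. Lagrange inversion is needed to evaluate this whole alternating sum, which turns out to be a nonzero multiple of $\frac{1}{k-1}[x^{k-2}]Q_m(x)^{-(k-1)}$.
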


The proof of Lemma \ref{p1} is postponed to Section \ref{Sec5}. Using this lemma, we establish Lemma \ref{c11}.

\begin{proof}[\bf{Proof of Lemma \ref{c11}}]
    By Lemma \ref{c8}, we have $\dim U_{m,k} = \dim W_{m,k}$. Hence, to prove that $\varphi_{m,k}$ is injective, it suffices to show that $\varphi_{m,k} (U_{m,k})=W_{m,k}$. Since $\dim \varphi_{m,k} (U_{m,k}) \le \dim U_{m,k} = \dim W_{m,k}$, the equality $\varphi_{m,k} (U_{m,k})=W_{m,k}$ is equivalent to the inclusion
    \[
    W_{m,k}\subseteq \varphi_{m,k} (U_{m,k})\quad \text{ for } k\ge 2,\text{ and } \; n\ge k-1.
    \]
    We prove this inclusion by induction on $k$.
    For $k=2$, note that the space 
    \begin{equation*}
        W_{m,2}=\text{span}_{\mathbb{R}}\left\{(x_1\cdots x_n)^{m} (x_1^m+\cdots+x_n^m)\right\}.
    \end{equation*}
    By Lemma \ref{p1} for $k=2$, there exist a polynomial $P\in U_{m,2}$ and a nonzero scalar $ c\in \mathbb{R}$ such that 
    \begin{equation*}
        \varphi_{m,2} (P)=c\cdot (x_1\cdots x_n)^{m}    (x_1^m+\cdots+x_n^m)\in W_{m,2}.
    \end{equation*}
    Hence $W_{m,2}\subseteq \varphi_{m,2} (U_{m,2})$.    
    Now suppose that $k\ge 3$ and $W_{m,k-1}\subseteq \varphi_{m,k-1} (U_{m,k-1})$. Let $W_{m,k}'\subseteq W_{m,k}$ be the subspace spanned by all polynomials of the form \eqref{formula1} with $\sum_{i=1}^{n}l_i \ge 1$. Equivalently, $W_{m,k}'$ is spanned by all basis elements of $W_{m,k}$ except \eqref{basisele}.
    Thus $\dim W_{m,k}'=\dim W_{m,k} -1$.
    We first show that $W_{m,k}'\subseteq \varphi_{m,k} (U_{m,k})$. Fix a basis element of $W_{m,k}'$, namely
    \[
    R=(x_1\cdots x_n)^m (x_1^{l_1} \cdots x_n^{l_n})^{m+1} (x_1^d+\cdots+x_n^d)
    \] 
    of the form \eqref{formula1} with $\sum_{i=1}^{n}l_i \ge 1$. Choose an index $j$ with $l_j\ge 1$, and define
    \begin{equation*}
        \widetilde R=\frac{R}{x_j^{m+1}}=(x_1\cdots x_n)^m (x_1^{l_1} \cdots x_{j}^{l_{j}-1}\cdots x_n^{l_n})^{m+1} (x_1^d+\cdots+x_n^d).
    \end{equation*}
    Then $\widetilde R \in W_{m,k-1} $. By the induction hypothesis, there exists a polynomial $P^{*}\in U_{m,k-1}$ of degree $mn+(m+1)(k-2)-1$ whose homogeneous component of that degree is 
    \[
    \varphi_{m,k-1} (P^{*})=\widetilde R.
    \]
    By Lemma \ref{fact2}, we have $x_j(x_j-1)\cdots(x_j-m)\cdot P^{*}\in U_{m,k}$, and its homogeneous component of degree $mn+(m+1)(k-1)-1$ is
    \begin{equation*}
        x_j^{m+1}\cdot \varphi_{m,k-1}(P^{*})=R.
    \end{equation*}
    Hence $R \in \varphi_{m,k}(U_{m,k})$.
    This implies that 
    $W_{m,k}'\subseteq \varphi_{m,k} (U_{m,k}).$
    Consequently, 
    \[
    W_{m,k}' \subseteq \varphi_{m,k}(U_{m,k}) \cap W_{m,k} \subseteq W_{m,k}.
    \]
    By Lemma \ref{p1}, there exists $P \in U_{m,k}$ such that $\varphi_{m,k}(P) \in W_{m,k}$ has a nonzero coefficient for the basis element \eqref{basisele} when expressed in terms of the basis of $W_{m,k}$. Recall that $W_{m,k}'$ is spanned by all basis elements of $W_{m,k}$ except \eqref{basisele}. Therefore 
    $\varphi_{m,k}(P) \in W_{m,k}  $ but $\varphi_{m,k}(P) \notin W_{m,k}'$.
    Thus $W_{m,k}' \subsetneq \varphi_{m,k}(U_{m,k}) \cap W_{m,k}$.
    Since $\dim W_{m,k}' = \dim W_{m,k} - 1$, it follows that $\varphi_{m,k}(U_{m,k}) \cap W_{m,k} = W_{m,k}$, i.e., $W_{m,k} \subseteq \varphi_{m,k}(U_{m,k})$. This completes the proof.
\end{proof}

\section{The Symmetric Polynomial Construction}\label{Sec5}
To prove Lemma \ref{p1}, we construct a symmetric polynomial
$P_0\in U_{m,k}$ with the required properties. Write $\bm x=(x_1,\ldots,x_n)$.
For positive integers $\lambda_1,\ldots,\lambda_t$ with $t\le n$,
define the augmented monomial symmetric polynomial and the
power-sum symmetric polynomial by
\begin{align*}
m_{\lambda_1,\ldots,\lambda_t}(\bm x)
&=
\sum_{\substack{i_1,\ldots,i_t\\ \text{distinct}}}
x_{i_1}^{\lambda_1}\cdots x_{i_t}^{\lambda_t},\\
p_{\lambda_1,\ldots,\lambda_t}(\bm x)
&=
\prod_{j=1}^t\left(\sum_{i=1}^n x_i^{\lambda_j}\right).
\end{align*}

\begin{Lemma}\label{c31}
	Let $k\ge 2$ and $n\ge k-1$. Suppose that $\lambda_1,\ldots,\lambda_t$ are positive integers with $\lambda_1+\cdots+\lambda_t \le (m+1)(k-1)-1$, such that all but one of the $\lambda_i$ are divisible by $m+1$, and the remaining one is congruent to $m$ modulo $m+1$. Then the augmented monomial symmetric polynomial
    $m_{\lambda_1,\ldots,\lambda_t}(\bm x)$
    can be written uniquely as a linear combination of power-sum symmetric polynomials
    $p_{\mu_1,\ldots,\mu_s}(\bm x)$,
	where $\mu_1,\ldots,\mu_s$ are positive integers with $\mu_1+\cdots+\mu_s=\lambda_1+\cdots+\lambda_t$, such that all but one of the $\mu_j$ are divisible by $m+1$, and the remaining one is congruent to $m$ modulo $m+1$.
\end{Lemma}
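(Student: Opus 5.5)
We prove existence by the standard Möbius inversion over set partitions, and uniqueness by algebraic independence of power sums in low degree. Write $\Lambda=\lambda_1+\cdots+\lambda_t$ and $[t]=\{1,\ldots,t\}$.

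\emph{Existence.} For a set partition $\pi$ of $[t]$ with blocks $B_1,\ldots,B_s$, set $\lambda_B=\sum_{i\in B}\lambda_i$ and $p_\pi=p_{\lambda_{B_1},\ldots,\lambda_{B_s}}$. Expanding the product of power sums and grouping the index tuples $(i_1,\ldots,i_t)$ according to their pattern of equalities gives
\[
p_\pi=\sum_{\sigma\ge\pi} m_\sigma ,
\]
where $\sigma$ runs over partitions coarser than $\pi$ and $m_\sigma$ is the augmented monomial polynomial with exponents $\lambda_B$, $B\in\sigma$. Möbius inversion on the partition lattice then yields
\[
m_{\lambda_1,\ldots,\lambda_t}=\sum_\pi \mu(\hat 0,\pi)\,p_\pi .
\]
This identity holds for any number $n$ of variables, since an augmented monomial with more than $n$ parts is simply zero.

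\emph{Residue condition.} We need every term $p_\pi$ to be of the admissible type. Let $\lambda_{i_0}$ be the unique part with $\lambda_{i_0}\equiv m \pmod{m+1}$. In $\pi$, the block containing $i_0$ has sum $\equiv m$, and every other block has sum $\equiv 0 \pmod{m+1}$. All these sums are positive, and their total is $\Lambda$. So each $p_\pi$ is a power-sum polynomial of the required form.

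\emph{Uniqueness.} Admissible $\mu$-tuples have all but one part at least $m+1$ and the remaining part at least $m$, so the number of parts satisfies $s\le \lfloor(\Lambda+1)/(m+1)\rfloor\le k-1\le n$. Hence each admissible $p_{\mu}$ is a product of at most $n$ power sums $p_j$. The key step is the classical fact that $p_1,\ldots,p_n$ are algebraically independent over $\mathbb{R}$ in $n$ variables. The catch is that the $\mu_j$ can exceed $n$, so I would not stop at that fact alone. Instead I would argue directly.

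Suppose $\sum_\mu c_\mu p_\mu=0$ with not all $c_\mu$ zero, where the sum is over admissible multisets $\mu$ with at most $n$ parts. Expand each $p_\mu$ in augmented monomials via the formula above, $p_\mu=\sum_{\sigma} m_\sigma$, where $\sigma$ ranges over coarsenings of the parts of $\mu$. For a multiset with at most $n$ parts, the augmented monomials $m_\sigma$ are nonzero and linearly independent for distinct underlying multisets of exponents, since they are positive multiples of distinct monomial symmetric functions. The transition matrix from $\{p_\mu\}$ to $\{m_\nu\}$ is unitriangular (up to positive diagonal multiplicities) with respect to refinement order. Taking a $\mu$ with $c_\mu\ne0$ and the maximal number of parts, the coefficient of $m_\mu$ in the relation is a nonzero multiple of $c_\mu$, a contradiction.

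The main obstacle is checking that all multisets appearing have at most $n$ parts, so that the corresponding $m_\nu$ do not vanish. This is guaranteed by the bound $s\le k-1\le n$ together with the hypothesis $n\ge k-1$.
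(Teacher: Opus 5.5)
Your proof is correct, and it reaches existence by a different, closed-form route from the paper.

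\textbf{Existence.} The paper argues by induction on $t$. It chooses $\lambda_t\equiv 0 \pmod{m+1}$ and uses the one-step recursion
\[
m_{\lambda_1,\ldots,\lambda_t}=p_{\lambda_t}\,m_{\lambda_1,\ldots,\lambda_{t-1}}-\sum_{j=1}^{t-1} m_{\lambda_1,\ldots,\lambda_j+\lambda_t,\ldots,\lambda_{t-1}},
\]
then checks that both pieces keep the residue pattern. You instead invert $p_\pi=\sum_{\sigma\ge\pi}m_\sigma$ on the partition lattice of $[t]$. This gives $m_{\lambda_1,\ldots,\lambda_t}=\sum_\pi \mu(\hat0,\pi)\,p_\pi$ directly. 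The two routes are equivalent, since unrolling the recursion produces your formula.

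Your version has two advantages:
\begin{itemize}
\item The residue condition becomes a one-line remark about block sums, with no need to single out a part divisible by $m+1$.
\item You get explicit coefficients. In particular, the coefficient of $p_{\lambda_1+\cdots+\lambda_t}$ is $\mu(\hat0,\hat1)=(-1)^{t-1}(t-1)!$. This is exactly Lemma~\ref{lem7.1}, which the paper proves separately by the same induction.
\end{itemize}
The paper's route is more elementary, since it needs only the single recursion and no lattice machinery.

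\textbf{Uniqueness.} The paper only asserts the linear independence of the $p_{\mu_1,\ldots,\mu_s}$ when $s\le n$. You prove it by triangularity against the augmented monomials. You are right that algebraic independence of $p_1,\ldots,p_n$ alone would not suffice, because the parts $\mu_j$ can exceed $n$. What matters is that every admissible tuple has at most $\lfloor(\Lambda+1)/(m+1)\rfloor\le k-1\le n$ parts, so all augmented monomials in play are nonzero and independent. You verify this explicitly; the paper states $s\le k-1$ without justification.
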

\begin{proof}
	We first prove existence by induction on $t$. For $t=1$, note that $\lambda_1 \equiv m \pmod{m+1}$, so $m_{\lambda_1}(\bm x)=p_{\lambda_1}(\bm x)$ already has the desired form.
	Suppose that the statement holds for $t-1$.
	Without loss of generality, we can assume that $\lambda_t$ is divisible by $m+1$. Note from the definitions that
	\begin{equation*}
			m_{\lambda_1,\ldots,\lambda_t}(\bm x)=p_{\lambda_t}(\bm x)m_{\lambda_1,\ldots,\lambda_{t-1}}(\bm x) -\sum_{j=1}^{t-1}m_{\lambda_1,\ldots,\lambda_j+\lambda_t,\ldots,\lambda_{t-1}}(\bm x).
	\end{equation*}
	Since exactly one of $\lambda_1,\ldots,\lambda_{t-1}$ is congruent to $m$ modulo $m+1$, we apply the induction hypothesis to
	$m_{\lambda_1,\ldots,\lambda_{t-1}}(\bm x)$.
	It follows that 
	$p_{\lambda_t}(\bm x)m_{\lambda_1,\ldots,\lambda_{t-1}}(\bm x)$
	is a linear combination of terms of the form 
	$p_{\lambda_t,\mu_1,\ldots,\mu_s}(\bm x)$
	for positive integers $\mu_1,\ldots,\mu_s$ with $\lambda_t+\mu_1+\cdots+\mu_s=\lambda_1+\cdots+\lambda_t$ and such that all but one of the $\lambda_t,\mu_1,\ldots,\mu_s$ are divisible by $m+1$, and the remaining one is congruent to $m$ modulo $m+1$. 
	Similarly, for each $j=1,\ldots,t-1$, we also apply the induction hypothesis to
	$m_{\lambda_1,\ldots,\lambda_j+\lambda_t,\ldots,\lambda_{t-1}}(\bm x)$,
	which again yields a linear combination of terms of the desired form. 
	Uniqueness follows from the
	linear independence of the power-sum symmetric polynomials
	$p_{\mu_1,\ldots,\mu_s}(\bm x)$, since
	$n\ge k-1\ge s$.
\end{proof}

Recall that $W_{m,k}$ is spanned by all polynomials
\[
(x_1\cdots x_n)^m (x_1^{l_1} \cdots x_n^{l_n})^{m+1} p_d(\bm x),
\]
where $d, l_1, \ldots, l_n$ are nonnegative integers satisfying
$
d + (m+1)(l_1+\cdots+l_n) = (m+1)(k-1)-1,
$
as in \eqref{formula1}.

\begin{Lemma}\label{c32}
    Let $k\ge 2$ and $n\ge k-1$. Let $P\in V_{m,k}$ be a symmetric polynomial and let $\overline{P}$ be the homogeneous component of $P$ of degree $mn+(m+1)(k-1)-1$. Suppose that $\overline{P}$ is divisible by $(x_1\cdots x_n)^m$. Then we have $\overline{P}\in W_{m,k}$. 
    Furthermore, when we express $\overline{P}$ in terms of the basis of $W_{m,k}$ given in \eqref{formula1}, then the coefficient of 
    \[
    (x_1\cdots x_n)^m 
    p_{(m+1)(k-1)-1}(\bm x)
    \]
    is equal to the coefficient of $p_{(m+1)(k-1)-1}(\bm x)$ when writing the symmetric polynomial $\widehat P =\frac{\overline{P}}{(x_1\cdots x_n)^m}$ in terms of power-sum symmetric polynomials.
\end{Lemma}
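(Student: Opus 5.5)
Write $D=(m+1)(k-1)-1$ and $\widehat P=\overline P/(x_1\cdots x_n)^m$. Since $P$ is symmetric, so is its top homogeneous component $\overline P$, and hence $\widehat P$ is a symmetric homogeneous polynomial of degree $D$. The plan has three steps: identify the monomials that can appear in $\widehat P$; expand $\widehat P$ as a combination of augmented monomial symmetric polynomials; then apply Lemma \ref{c31} to pass to power sums and regroup into the basis \eqref{formula1}.

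\emph{Step 1 (monomials of $\widehat P$).} Every monomial $x_1^{m+e_1}\cdots x_n^{m+e_n}$ of $\overline P$ has $\sum e_i=D$. Since $P\in V_{m,k}$, no such monomial is divisible by a product of $k$ factors $x_{i_j}^{m+1}$. Write $m+e_i=(m+1)q_i+r_i$ with $0\le r_i\le m$. Then $\sum q_i\le k-1$, and $\sum_i\bigl((m+1)q_i+r_i\bigr)=mn+D$. If every $r_i$ equaled $m$, we would get $(m+1)\sum q_i=D$, which is impossible because $D\equiv m\pmod{m+1}$. More carefully, $\sum_i r_i=mn+D-(m+1)\sum q_i\ge mn+D-(m+1)(k-1)=mn-1$. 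Since each $r_i\le m$, this forces exactly one index $i_0$ with $r_{i_0}=m-1$ and all other $r_i=m$, and also $\sum q_i=k-1$. Translating back, $e_i=(m+1)q_i$ for $i\ne i_0$, and $e_{i_0}=(m+1)q_{i_0}-1$, which is $\equiv m\pmod{m+1}$ and is positive, because $q_{i_0}\ge1$ is forced by $e_{i_0}\ge0$. So every monomial of $\widehat P$ has one exponent congruent to $m$ modulo $m+1$, and all its other exponents are multiples of $m+1$.

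\emph{Step 2 (expansion in augmented monomials and power sums).} By symmetry, $\widehat P$ is a linear combination of augmented monomial symmetric polynomials $m_{\lambda_1,\ldots,\lambda_t}$. Here the $\lambda_j$ are the positive exponents occurring in these monomials, so they satisfy the congruence pattern of Lemma \ref{c31}, with $\sum\lambda_j=D$. (The augmented and ordinary monomial symmetric polynomials differ only by positive multiplicity factors, so this is harmless.) Lemma \ref{c31} then writes $\widehat P$ as a combination of power sums $p_{\mu_1,\ldots,\mu_s}$ of the same type. The number of parts satisfies $s\le k-1\le n$, because the multiples of $m+1$ sum to less than $(m+1)(k-1)$. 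Reorder so that $\mu_1\equiv m$ and $\mu_j=(m+1)\nu_j$ for $j\ge2$. The key observation is that $p_{(m+1)\nu}(\bm x)=\sum_i (x_i^{m+1})^{\nu}$, so $\prod_{j\ge2}p_{(m+1)\nu_j}$ is a polynomial in $x_1^{m+1},\ldots,x_n^{m+1}$. Expanding it as a combination of monomials $(x_1^{l_1}\cdots x_n^{l_n})^{m+1}$ with $\sum l_i=\sum_j\nu_j$, we get that $p_{\mu_1,\ldots,\mu_s}$ is a combination of $(x^{l})^{m+1}p_{\mu_1}$ with $\mu_1+(m+1)\sum l_i=D$. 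These are exactly the basis elements \eqref{formula1} after multiplying by $(x_1\cdots x_n)^m$. Therefore $\overline P\in W_{m,k}$.

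\emph{Step 3 (the coefficient), the main point of care.} We must check that only the single power sum $p_D$ contributes to the basis element with all $l_i=0$. A term $p_{\mu_1,\ldots,\mu_s}$ with $s\ge2$ produces only terms with $\sum l_i=\sum_{j\ge2}\nu_j\ge1$. The term $p_D$ itself, with $s=1$, is exactly the element $l=0$, $d=D$. By the linear independence in Lemma \ref{fact3}, the coefficient of $(x_1\cdots x_n)^m p_D$ in $\overline P$ therefore equals the coefficient of $p_D$ in the power-sum expansion of $\widehat P$. That expansion is unique by Lemma \ref{c31}, so the coefficient is well defined. The delicate step is the counting in Step 1, which yields the exponent pattern; Steps 2 and 3 are bookkeeping.
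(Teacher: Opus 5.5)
Your proposal is correct and follows the paper's proof essentially step by step. You derive the exponent pattern of $\widehat P$ from the reduced condition; the paper gets $\sum l_i=k-1$ directly from the degree count, while you get it from $\sum r_i\ge mn-1$ plus a congruence, which is equivalent. The remaining steps match the paper: expand $\widehat P$ in augmented monomial symmetric polynomials, pass to power sums via Lemma \ref{c31}, regroup the factors divisible by $m+1$ into the basis \eqref{formula1}, and read off the unique basis element with $d=(m+1)(k-1)-1$.
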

\begin{proof}
    Recall that any monomial of $P\in V_{m,k}$ must be of the form 
    \[
       x_1^{(m+1)l_1+r_1} \cdots x_n^{(m+1)l_n+r_n},
    \]
    where $l_1,\ldots,l_n$ are nonnegative integers with $\sum_{i=1}^{n}l_i<k$ and $r_1,\ldots,r_n\in \left\{0,1,\ldots,m\right\}$. If $(m+1)\sum_{i=1}^{n}l_i+\sum_{i=1}^{n}r_i=mn+(m+1)(k-1)-1$, then necessarily $\sum_{i=1}^{n}l_i=k-1$ and $\sum_{i=1}^{n}r_i=mn-1$.
    Note that $\overline{P}$ denotes the homogeneous component of $P$ of degree $mn+(m+1)(k-1)-1$, so every monomial of $\overline P$ has exactly $n-1$ variables whose exponents are congruent to $m$ modulo $m+1$, and the remaining variable has exponent congruent to $m-1$ modulo $m+1$.
    Consequently, every monomial of $\widehat P$ has exactly $n-1$ variables whose exponents are divisible by $m+1$, and the remaining variable has exponent congruent to $m$ modulo $m+1$. Moreover, $\widehat P$ is homogeneous of degree $(m+1)(k-1)-1$.
     
    Since $P$ is symmetric, both $\overline{P}$ and $\widehat P$ are symmetric as well.
    Hence $\widehat P$ can be written uniquely as a linear combination of augmented monomial symmetric polynomials
    $m_{\lambda_1,\ldots,\lambda_t}(\bm x)$,
    where $\lambda_1+\cdots+\lambda_t=(m+1)(k-1)-1$, such that all but one of the $\lambda_i$ are divisible by $m+1$, and the remaining one is congruent to $m$
    modulo $m+1$.
    Applying Lemma \ref{c31} to each augmented monomial symmetric polynomial appearing in $\widehat P$, we conclude that $\widehat P$ has a unique expression as a linear combination of terms of the form
    $p_{\mu_1,\ldots,\mu_s}(\bm x)$,
    where $\mu_1,\ldots,\mu_s$ are positive integers with $\mu_1+\cdots+\mu_s=(m+1)(k-1)-1$, such that all but one of the $\mu_j$ are divisible by $m+1$, and the remaining one is congruent to $m$ modulo $m+1$. Without loss of generality, we assume 
    $\mu_1 \equiv m \pmod{m+1}$, and $\mu_2, \ldots, \mu_s$ are divisible by $m+1$.
    Multiplying by $(x_1\cdots x_n)^m$, we obtain that $\overline{P}=(x_1\cdots x_n)^m\cdot \widehat P$ can be written uniquely as a linear combination of terms of the form 
    \begin{equation}\label{overlineP}
    	(x_1\cdots x_n)^m p_{\mu_1,\ldots,\mu_s}(\bm x)=(x_1\cdots x_n)^m p_{\mu_1}(\bm x)p_{\mu_2,\ldots,\mu_s}(\bm x),
    \end{equation}
    where $\mu_1,\ldots,\mu_s$ are positive integers satisfying $\mu_1+\cdots+\mu_s=(m+1)(k-1)-1$, with $\mu_1 \equiv m \pmod{m+1}$ and $\mu_2, \ldots, \mu_s$ divisible by $m+1$.
    By convention, $p_{\mu_2,\ldots,\mu_s}(\bm x)=1$ when $s=1$.
    Expanding the factor
    $p_{\mu_2,\ldots,\mu_s}(\bm x)$ yields only monomials in which the exponent of each variable is divisible by $m+1$. Hence each term of the form \eqref{overlineP} can be expressed as a linear combination of terms of the form \eqref{formula1} with $d=\mu_1$. Therefore,
    \[
    \overline{P}\in W_{m,k}.
    \]
    Finally, let $\lambda$ denote the coefficient of
    $p_{(m+1)(k-1)-1}(\bm x)$
    in the expansion of $\widehat P$ in terms of power-sum symmetric polynomials. When expressing $\overline P$ in terms of the form \eqref{overlineP}, the only term with $\mu_1=(m+1)(k-1)-1$ is
    \begin{equation*}
    (x_1\cdots x_n)^m
    p_{(m+1)(k-1)-1}(\bm x),
    \end{equation*}
    and its coefficient is exactly $\lambda$.  Moreover, when $\overline P$ is expressed in the basis of $W_{m,k}$ given in \eqref{formula1}, the polynomial above is the unique basis element with $d=(m+1)(k-1)-1$, and its coefficient is also equal to $\lambda$.
\end{proof}

We now prove Lemma \ref{p1}. The proof relies on two claims, whose proofs are postponed to the next two sections.

\begin{proof}[\bf {Proof of Lemma \ref{p1}}]
    Fix $k\ge 2$ and $n\ge k-1$. 
    We construct a symmetric polynomial $P_0\in U_{m,k}$ with the following two desired properties. Its homogeneous component $\varphi_{m,k}(P_0)$ of degree   
    $mn+(m+1)(k-1)-1$ belongs to $W_{m,k}$.  
    Moreover, when $\varphi_{m,k}(P_0)$ is expressed in the basis of $W_{m,k}$ 
    given in \eqref{formula1}, the coefficient of
    \[
    (x_1\cdots x_n)^m
    p_{(m+1)(k-1)-1}(\bm x)
    \]
    is nonzero.
    Define the symmetric polynomial
    \begin{equation}\label{for2}
        P(\bm x)=\prod_{i=1}^{n}\bigl((x_i-1)^{\underline{m}}\bigr)^k.
    \end{equation}
    Then $P$ has zeros of multiplicity at least $k$ at all points in $mB^n\setminus\{\bm{0}\}$.   
    By Corollary \ref{coro3.4}, there exists a unique polynomial $P_0\in V_{m,k}$ such that $P-P_0$ has zeros of multiplicity at least $k$ at all points in $mB^n \setminus \{\bm{0}\}$ and a zero of multiplicity at least $k-1$ at $\bm{0}$. It follows that $P_0$ also has zeros of multiplicity at least $k$ at all points in $mB^n\setminus\{\bm{0}\}$, and hence $P_0\in U_{m,k}$. Since $P$ is symmetric and $P_0$ is unique, $P_0$ must also be symmetric. Otherwise, permuting the variables of $P_0$ would produce another polynomial satisfying the same properties, contradicting uniqueness.
    To apply Lemma \ref{c32} to $P_0$, we need the following claim, whose proof will be given in Section \ref{Sec6}.
    \begin{claim}\label{l33}
        The polynomial $\varphi_{m,k}(P_0)$ is divisible by $(x_1\cdots x_n)^m$.
    \end{claim}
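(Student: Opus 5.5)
The plan is to compute the degree-$(mn+(m+1)(k-1)-1)$ component of $P_0$ directly, by carrying out the reduction of Lemma \ref{c1} on the explicit product \eqref{for2}. This is legitimate because $P_0$ is unique (Corollary \ref{coro3.4}), so any reduction procedure yields the same $P_0$. Write $g(x)=(x-1)^{\underline m}$ and $e(x)=(x)^{\underline{m+1}}=x\,g(x)$. The factor $f(x)=g(x)^k$ has univariate $(e)$-adic expansion
\[
f(x)=\sum_{j\ge0}c_j(x)\,e(x)^j,\qquad \deg c_j\le m .
\]
Then
\[
P=\sum_{(j_1,\ldots,j_n)}\prod_i c_{j_i}(x_i)\,e(x_i)^{j_i}.
\]
Terms with $\sum j_i\ge k$ lie in $V_1$ and are discarded. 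Terms with $\sum j_i\le k-2$ have degree at most $mn+(m+1)(k-2)$, which is too small to reach the top degree. Terms with $\sum j_i=k-1$ have degree at most $mn+(m+1)(k-1)$. These are the only ones that matter, and from them the $V_2$-generators $\prod_i g(x_i)\cdot\prod_{t=1}^{k-1}e(x_{i_t})$ must be subtracted to kill the monomials of degree exactly $mn+(m+1)(k-1)$.

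The key step I would aim for is a univariate structural fact about the coefficients $c_j$:
\[
c_j(x)=\alpha_j\,g(x)+(\text{terms of degree}\le m-2),
\]
i.e.\ $[x^{m-1}]c_j=-\tfrac{m(m+1)}{2}\,[x^m]c_j$, which is the same ratio as in $g$. To prove it, I would use that $f=g^k$ and $e=xg$ are both divisible by $g$. Dividing $f-\sum_{j'<j}c_{j'}e^{j'}$ successively by $e$ and inducting on $j$, one checks that each remainder $c_j$ arises as $g$ times a scalar, plus a correction coming from reducing $x^{a}g(x)$ modulo $e=xg$. Since $x^ag\equiv (\text{const})\cdot g \pmod{e}$ up to terms controlled by lower coefficients, I expect this correction to be of degree at most $m-2$. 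If this direct route is messy, an alternative is to evaluate $c_j$ through its values and derivatives at $x=1,\ldots,m$, where $f$ vanishes to order $k$, and show that $c_j$ vanishes at $1,\ldots,m$ modulo a degree-$(m-2)$ defect.

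Given this fact, expand each summand with $\sum j_i=k-1$. Its top-degree part is $\prod_i\alpha_{j_i}$ times the top part of $\prod_i g(x_i)e(x_i)^{j_i}$, plus pieces of degree at most $mn+(m+1)(k-1)-2$, which are irrelevant. Now $\prod_i g(x_i)\prod e(x_i)^{j_i}$ is, up to the scalar $\prod\alpha_{j_i}$, exactly one of the $V_2$-generators. Hence after the $V_2$-subtractions, the degree-$(mn+(m+1)(k-1)-1)$ component of $P_0$ is assembled from two sources. The first is the degree-$(mn-1)$ parts of $\prod_i c_{j_i}(x_i)$ times $\prod_i x_i^{(m+1)j_i}$. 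The second is the corresponding subleading parts of the subtracted generators, together with the subleading parts of the $e(x_i)^{j_i}$ factors; these last combine in the same way on both sides.

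The structural fact says exactly that the degree-$(mn-1)$ part of $\prod_i c_{j_i}(x_i)$ matches $\prod_i\alpha_{j_i}$ times that of $\prod_i g(x_i)$. So the pieces in which some variable carries exponent exactly $m-1$ (with $l_i=0$) cancel between $P$ and the $V_2$-correction. Every surviving top-degree monomial therefore has each exponent at least $m$. Since every top-degree monomial has the form $x_i^{(m+1)l_i+r_i}$ with exactly one $r_i=m-1$ (as in the proof of Lemma \ref{c32}), and all $r_i\ge m$ in the others, a monomial with all exponents at least $m$ is divisible by $(x_1\cdots x_n)^m$. This would give Claim \ref{l33}.

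The main obstacle is the bookkeeping in the middle step. One must check that the subleading contributions from the factors $e(x_i)^{j_i}$ in $P$ and in the subtracted $V_2$-generators cancel exactly, and, above all, prove the structural claim on $[x^{m-1}]c_j$.
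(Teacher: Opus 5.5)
Your reduction scheme itself is sound. Expanding $f=g^k$ $e$-adically, dropping the terms with $\sum j_i\ge k$, and subtracting $\prod_i\alpha_{j_i}g(x_i)e(x_i)^{j_i}$ from each term with $\sum j_i=k-1$ lands in $V_{m,k}$, so by uniqueness it yields $P_0$, essentially as in Lemma \ref{P_1}. The gap is the structural fact you single out as the key step: it is false for every $j\ge1$. Neither your division argument nor your derivative argument can prove it. For $m=1$, $k=2$ one has $(x-1)^2=1\cdot x(x-1)-(x-1)$, so $c_1=1$; then $[x^0]c_1=1$ while $-[x^1]c_1=0$. For $m=k=2$ one has $g^2=(x-3)e+2g$, and $c_1=x-3$ again violates it. In general, write $c_j=\sum_{r=0}^m a_{k,j,r}(x-m)^{\overline r}$ for $j\ge1$ as in Lemma \ref{c45}. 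Your claim then says $a_{k,j,m-1}=0$, whereas Lemma \ref{lem7.3} gives $a_{k,1,m-1}=a_{k-1,0,m}=\bigl((-1)^m m!\bigr)^{k-2}\ne0$; the last value follows by evaluating the expansion at $x=0$. Worse, if the claim held for all $j$, your bookkeeping would cancel \emph{every} contribution of degree $mn+(m+1)(k-1)-1$, giving $\varphi_{m,k}(P_0)=0$. Already for $n=m=1$, $k=2$ one has $P_0=P=(x_1-1)^2$ and $\varphi_{1,2}(P_0)=x_1^2\ne0$. In general, nonvanishing is exactly what Claim \ref{l34} asserts.

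What you actually need is only the case $j=0$. A variable can end with exponent exactly $m-1$ and $l_i=0$ only through a factor $c_0(x_i)$. There the fact holds in the stronger form $c_0=\alpha_0g$, because $f$ and every $c_je^j$ with $j\ge1$ vanish at $x=1,\ldots,m$, while $\deg c_0\le m$. Write $c_j=\alpha_jg+d_j$ with $\deg d_j\le m-1$, so that $d_0=0$. Each difference $\prod_ic_{j_i}(x_i)e(x_i)^{j_i}-\prod_i\alpha_{j_i}g(x_i)e(x_i)^{j_i}$ expands as a sum over nonempty $S\subseteq\{1,\ldots,n\}$ of $\prod_{i\in S}d_{j_i}(x_i)e(x_i)^{j_i}\prod_{i\notin S}\alpha_{j_i}g(x_i)e(x_i)^{j_i}$. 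Its top-degree part is therefore
\[
\sum_{s:\,j_s\ge1}[x^{m-1}]d_{j_s}\;x_s^{(m+1)j_s+m-1}\prod_{i\ne s}\alpha_{j_i}x_i^{(m+1)j_i+m}.
\]
This also disposes of the subleading parts of the $e(x_i)^{j_i}$ with no further bookkeeping. Every exponent here is at least $m$, which proves the claim. The terms with $j_s\ge1$ do not cancel; they are exactly what produces $Y_{m,k}\ne0$ in Section \ref{Sec7}. This is the paper's argument: Lemma \ref{c45} shows that the $l=0$ part of each factor is $a_{k,0,m}(x_i-1)^{\underline m}$, so $r_i=m$ whenever $l_i=0$. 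The top component of $P_0=P_1$ is then read off from leading monomials.
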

    \noindent Combining Claim \ref{l33} with Lemma \ref{c32}, we conclude that $\varphi_{m,k}(P_0)\in W_{m,k}$. Moreover, Lemma \ref{c32} implies that when $\varphi_{m,k}(P_0)$ is expressed in the basis of $W_{m,k}$, the coefficient of 
    $(x_1\cdots x_n)^m p_{(m+1)(k-1)-1}(\bm x)$ 
    equals the coefficient of 
    $p_{(m+1)(k-1)-1}(\bm x)$
    when $\frac{\varphi_{m,k}(P_0)}{(x_1\cdots x_n)^m}$ is written in terms of power-sum symmetric polynomials. Thus, it remains to show that the latter coefficient is nonzero, as stated in Claim \ref{l34}, which is proved in Section \ref{Sec7}.
    \begin{claim}\label{l34}
        When writing $\frac{\varphi_{m,k}(P_0)}{(x_1\cdots x_n)^m}$ in terms of power-sum symmetric polynomials, the coefficient of $p_{(m+1)(k-1)-1}(\bm x)$ is nonzero.
    \end{claim}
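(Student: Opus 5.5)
The plan is to obtain an explicit per-variable description of the top homogeneous component of $P_0$, and then to extract the coefficient of $p_{(m+1)(k-1)-1}$ by a Möbius-inversion formula. Set $g(x)=x^{\underline{m+1}}$ and $f(x)=\bigl((x-1)^{\underline m}\bigr)^k$, so that $P=\prod_i f(x_i)$.

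\emph{Step 1: $g$-adic expansion and reduction of $P$.} Expand $f$ $g$-adically as
\[
f(x)=\sum_{j\ge0}a_j(x)\,g(x)^j,\qquad \deg a_j\le m.
\]
Multiplying out gives $P=\sum_{\bm j}\prod_i a_{j_i}(x_i)g(x_i)^{j_i}$.
\begin{itemize}
\item Every term with $\sum j_i\ge k$ lies in the ideal $V_1$ of Lemma \ref{c1}.
\item Terms with $\sum j_i\le k-2$ have degree at most $mn+(m+1)(k-2)$, so they do not reach the critical degree.
\item Terms with $\sum j_i=k-1$ have degree at most $mn+(m+1)(k-1)$. Their top part is a combination of the leading monomials of $V_2$, and is removed by subtracting the appropriate combination of $\prod_i(x_i-1)^{\underline m}\prod_{r}g(x_{i_r})$.
\end{itemize}
Since $P_0$ is unique (Corollary \ref{coro3.4}), $P_0$ is exactly what remains after these reductions, possibly followed by further reductions that only lower the degree. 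Hence $\varphi_{m,k}(P_0)$ is the degree-$(mn+(m+1)(k-1)-1)$ part of
\[
\sum_{\sum j_i=k-1}\Bigl(\prod_i a_{j_i}(x_i)g(x_i)^{j_i}-c_{\bm j}\prod_i(x_i-1)^{\underline m}g(x_i)^{j_i}\Bigr),
\]
where $c_{\bm j}=\prod_i \operatorname{lc}(a_{j_i})$. In each summand this part is a sum over $i$ of a ``one-variable defect'' times leading terms in the other variables. Explicitly, it has the shape
\[
\sum_{\sum j_l=k-1}\ \sum_{i}\ \delta_{j_i}\,x_i^{m+(m+1)j_i-1}\prod_{l\ne i}\alpha_{j_l}x_l^{m+(m+1)j_l},
\]
where the scalars $\alpha_j$ and $\delta_j$ are read off from the two leading coefficients of $a_j$, of $g^j$, and of $(x-1)^{\underline m}$. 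Claim \ref{l33} forces $\delta_0=0$. After dividing by $(x_1\cdots x_n)^m$, we get a symmetric polynomial in which exactly one variable carries the exponent $(m+1)j_i-1$ with $j_i\ge1$.

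\emph{Step 2: Möbius formula for the coefficient of a single power sum.} Write $\widehat P$ in augmented monomial symmetric polynomials. By inclusion--exclusion over set partitions (equivalently, the recursion in Lemma \ref{c31}), the coefficient of $p_D$ in $m_{\lambda_1,\ldots,\lambda_t}$ is $(-1)^{t-1}(t-1)!$, where $D=(m+1)(k-1)-1$. Hence the required coefficient equals
\[
\sum_{\lambda}(-1)^{t-1}(t-1)!\,\frac{\mathrm{coef}(\widehat P;\,x_1^{\lambda_1}\cdots x_t^{\lambda_t})}{\#\text{orderings of }\lambda}.
\]
Substituting the product structure from Step 1 turns this into a sum over compositions of $k-1$: one distinguished part $j\ge1$ with weight $\delta_j$, and other parts $j'\ge1$ with weight $\alpha_{j'}$. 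Note that $\alpha_0$ contributes only through variables with exponent $0$, which are absent from $\lambda$. The factor $(-1)^{t-1}(t-1)!/(\text{symmetry})$ converts this sum into a logarithmic-type generating function. The coefficient therefore becomes
\[
[z^{k-1}]\ \Delta(z)\cdot \frac{1}{1+\bigl(A(z)-\alpha_0\bigr)/\alpha_0}\quad\text{or similar},
\]
with $A(z)=\sum_j\alpha_jz^j$ and $\Delta(z)=\sum_j\delta_jz^j$. The precise form is to be fixed by the computation.

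\emph{Step 3: evaluate the coefficient (main obstacle).} The remaining task is to compute $\alpha_j$ and $\delta_j$ in closed form and show that this coefficient is nonzero. The $g$-adic coefficients $a_j$ of $f=\bigl((x-1)^{\underline m}\bigr)^k$ are implicitly defined by the inversion of $g$. This is the natural place for the Lagrange inversion formula: taking $w=g(x)$ as a local parameter at infinity, the generating series $\sum_j \operatorname{lc}(a_j)z^j$ and its next-order correction are coefficient extractions of powers of $x\mapsto f(x)/g(x)^{\,\cdot}$ in the variable $w$. I expect the final quantity to collapse to a single binomial-type expression. One plausible form is a nonzero multiple of $\binom{(m+1)(k-1)-1}{k-1}$-like numbers, and it is visibly nonzero. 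Pinning down this closed form, and verifying that there are no hidden cancellations between the $\delta$- and $\alpha$-contributions, is the step I expect to be hardest.
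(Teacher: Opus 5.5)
Your Steps 1 and 2 are correct and essentially reproduce the paper's Section~\ref{Sec6} together with Lemmas~\ref{lem7.1} and~\ref{Ymk0}. If you expand each remainder $a_j$ in the basis $(x-m)^{\overline{r}}$, $0\le r\le m$, your $\alpha_j$ and $\delta_j$ are exactly the paper's $a_{k,j,m}$ and $a_{k,j,m-1}$. Also, $\delta_0=0$ holds directly: $a_0$ vanishes at $1,\ldots,m$ and is therefore a multiple of $(x-1)^{\underline m}$. The cyclic-rotation argument then yields your geometric form; the coefficient equals $\alpha_0^{\,n}\,[z^{k-1}]\,\Delta(z)/A(z)$.

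However, the claim \emph{is} the nonvanishing of this number, and that is your Step~3, which you leave open. Nothing in Steps~1--2 rules out cancellation between the $\delta$- and $\alpha$-contributions. This is a genuine gap: as written, the proposal does not prove the claim.

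Three ingredients are missing. Write $\alpha^{(k)}_j,\delta^{(k)}_j,A_k,\Delta_k$ to record the exponent in $f=h^k$, where $h=(x-1)^{\underline m}$.

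\emph{First}, you need a relation between consecutive values of $k$. Multiply the expansion of $h^{k-1}$ by $h$ and use
$h\,(x-m)^{\overline r}=x^{\underline{m+1}}(x-m)^{\overline{r-1}}-(m+1-r)\,h\,(x-m)^{\overline{r-1}}$.
This gives the recurrences of Lemma~\ref{lem7.3}, whose case $r=m-1$ reads $\delta^{(k)}_j=\alpha^{(k-1)}_{j-1}$. Thus $\Delta_k(z)=zA_{k-1}(z)$, and what you must evaluate is $[z^{k-2}]\,A_{k-1}(z)/A_k(z)$.

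\emph{Second}, you need the closed form (Lemma~\ref{lem7.4})
$\alpha^{(k)}_l=(-1)^{m(k-1)-(m+1)l}(m!)^{k-1-l}[x^l]Q_m(x)^{k-1-l}$, with $Q_m(x)=\prod_{j=1}^m(1+x/j)$.
After the substitution $u=(-1)^{m+1}z/m!$ and removal of constant factors, $A_k$ and $A_{k-1}$ both take the form $\sum_j u^j[x^j]\bigl(H(x)G(x)^j\bigr)$. Here $G=1/Q_m$, and $H=Q_m^{k-1}$, resp.\ $Q_m^{k-2}$. This is where Lagrange inversion actually enters: in this auxiliary variable, not through an inversion of $g$ at infinity. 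By Corollary~\ref{lagrange} the ratio is $1/Q_m(y)=y/u$ with $y=uG(y)$. The coefficient is then, up to a nonzero constant, $[u^{k-1}]y=\frac{1}{k-1}[x^{k-2}]Q_m(x)^{-(k-1)}$, as in Lemma~\ref{Ymk}.

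\emph{Third}, nonvanishing comes from sign coherence, not from a binomial-type closed form such as the one you guess. We have
$[x^{k-2}]\prod_{j=1}^m(1+x/j)^{-(k-1)}=(-1)^{k-2}\sum_{s_1+\cdots+s_m=k-2}\prod_{j}\binom{k+s_j-2}{s_j}j^{-s_j}$,
which is a sum of terms of a single sign and hence nonzero.
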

    \noindent This completes the proof of Lemma \ref{p1}.
\end{proof}

\section{Divisibility of the Highest-Degree Component}\label{Sec6}
In this section, we establish Claim \ref{l33} by studying the polynomial $P_0$ and its homogeneous component $\varphi_{m,k}(P_0)$ of degree $mn+(m+1)(k-1)-1$. Fix $k\ge 2$ and $n\ge k-1$. Recall from \eqref{for2} that
\begin{equation*}
    P(\bm x)=\prod_{i=1}^{n}\bigl((x_i-1)^{\underline{m}}\bigr)^k.
\end{equation*}

\begin{Lemma}\label{c45}
For $k\ge 1$, the polynomial $\bigl((x_i-1)^{\underline{m}}\bigr)^k$ has a unique expansion of the form
\[
\bigl((x_i-1)^{\underline{m}}\bigr)^k =a_{k,0,m}(x_{i}-1)^{\underline{m}}
    +\sum_{l=1}^{q_k} \sum_{r = 0}^{m} a_{k,l,r} \bigl((x_i)^{\underline{m+1}}\bigr)^l
    (x_i - m)^{\overline{r}},
\]
where $q_k=\lfloor mk/(m+1)\rfloor$. Moreover,
$a_{k,l,r}=0$ whenever $(m+1)l+r>mk$.
\end{Lemma}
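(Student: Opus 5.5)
We will view the claimed expansion as a statement about a basis of the univariate polynomial ring $\mathbb{R}[x]$ (writing $x=x_i$). Consider the family
\[
\mathcal B=\bigl\{(x-1)^{\underline{m}}\bigr\}\cup\Bigl\{\bigl((x)^{\underline{m+1}}\bigr)^l (x-m)^{\overline{r}} : l\ge 1,\ 0\le r\le m\Bigr\}.
\]
The product $\bigl((x)^{\underline{m+1}}\bigr)^l (x-m)^{\overline r}$ is monic of degree $(m+1)l+r$. As $l\ge1$ and $0\le r\le m$ vary, these degrees run exactly once through every integer $\ge m+1$. Together with $(x-1)^{\underline m}$ of degree $m$, the family therefore has one element in each degree $\ge m$, so it is linearly independent.

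However, $\mathcal B$ does not span $\mathbb{R}[x]$, since degrees $0,\ldots,m-1$ are missing. Hence existence cannot come from degree counting alone. We must use that $F_k:=\bigl((x-1)^{\underline m}\bigr)^k$ is divisible by $(x-1)^{\underline m}=(x-1)(x-2)\cdots(x-m)$. Every element of $\mathcal B$ is divisible by $(x-1)\cdots(x-m)$:
\begin{itemize}
\item for $l\ge1$, the factor $(x)^{\underline{m+1}}=x(x-1)\cdots(x-m)$ already contains it;
\item the remaining element is $(x-1)^{\underline m}$ itself.
\end{itemize}
So the natural setting is the ideal $I=(x-1)^{\underline m}\cdot\mathbb{R}[x]$. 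Its nonzero elements have degrees $\ge m$, and it contains $F_k$ for $k\ge1$.

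The key step is to show that $\mathcal B$ spans $I$. We argue by triangularity. Take any nonzero $G\in I$ of degree $D\ge m$. Subtract the leading coefficient of $G$ times the unique element of $\mathcal B$ of degree $D$. The difference stays in $I$ and has degree $<D$. Since $I$ has no nonzero element of degree below $m$, the process terminates at $0$. Applying this to $G=F_k$ gives the expansion.

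This elimination is also what yields the vanishing statement. Since $\deg F_k=mk$, only elements of $\mathcal B$ of degree at most $mk$ are used. Thus $a_{k,l,r}=0$ whenever $(m+1)l+r>mk$. In particular $l\le \lfloor mk/(m+1)\rfloor=q_k$, so the upper summation limit $q_k$ in the statement is justified. Uniqueness follows from the linear independence of $\mathcal B$, which holds because its elements have distinct degrees.

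The only point requiring care is the membership $F_k\in I$, together with the observation that $I$ contains no nonzero polynomial of degree less than $m$. Without these, the descent could leave a nonzero remainder of low degree. Both facts are immediate: the first because $k\ge1$, the second because every nonzero multiple of a degree-$m$ polynomial has degree $\ge m$. I expect no further obstacle; the statement is essentially a change of basis combined with degree bookkeeping.
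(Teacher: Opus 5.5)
Your proof is correct and is essentially the paper's argument. Both rest on the family having exactly one monic element in each degree $\ge m$, and on every term being divisible by $(x-1)(x-2)\cdots(x-m)$. The only difference is packaging: the paper completes the family to a full basis by adjoining $(x-1)^{\underline{s}}$ for $0\le s<m$ and then kills those extra coefficients by root counting at $x=1,\ldots,m$, whereas you run the degree descent directly inside the ideal $(x-1)^{\underline{m}}\,\mathbb{R}[x]$, which avoids the auxiliary basis elements.
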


\begin{proof}
	The polynomials $(x_i-1)^{\underline s}$ for
	$s=0,1,\ldots,m$ and
	\begin{equation*}
		\bigl((x_i)^{\underline{m+1}}\bigr)^l
		(x_i-m)^{\overline r},
		\qquad
		1\le l\le q_k,\quad 0\le r\le m,
	\end{equation*}
	have pairwise distinct degrees, namely
	$0,1,\ldots,(m+1)q_k+m$.
	Since $(m+1)q_k+m \ge mk$, the polynomial
	$\bigl((x_i-1)^{\underline{m}}\bigr)^k$ of degree $mk$
	has a unique expansion as a linear combination of these polynomials.
	Moreover, $\bigl((x_i-1)^{\underline{m}}\bigr)^k$,
	$(x_i-1)^{\underline{m}}$, and
	$\bigl((x_i)^{\underline{m+1}}\bigr)^l
	(x_i-m)^{\overline{r}}$
	all vanish at the points $x_i=1,2,\ldots,m$. Hence the remaining
	linear combination of $(x_i-1)^{\underline s}$ for $0\le s<m$
	has degree at most $m-1$ and has $m$ distinct roots. It is therefore
	the zero polynomial, which gives the stated expansion. Since the
	left-hand side has degree $mk$, no term of degree greater than $mk$
	can occur with a nonzero coefficient in this expansion.
\end{proof}

By Lemma \ref{c45}, the polynomial $P(\bm x)$ can be written as
\[
\prod_{i=1}^n \Bigl(
a_{k,0,m}(x_{i}-1)^{\underline{m}}
+\sum_{l=1}^{q_k} \sum_{r = 0}^{m} a_{k,l,r} \bigl((x_i)^{\underline{m+1}}\bigr)^l
(x_i - m)^{\overline{r}}
\Bigr),
\]
where $q_k=\lfloor\frac{mk}{m+1}\rfloor$.
Note that $(x_i-1)^{\underline m}=(x_i-m)^{\overline m}$. Expanding $P(\bm x)$ over all $i$, each resulting term is of the form
\begin{equation*}
	\prod_{i=1}^n \bigl((x_i)^{\underline{m+1}}\bigr)^{l_i}
	(x_i - m)^{\overline{r_i}}, \quad 0 \le l_i\le q_k,\; 0\le r_i\le m
\end{equation*}
with $r_i=m$ whenever $l_i=0$.
Recall that $P_0$ is the unique polynomial in $V_{m,k}$ such that $P-P_0$ has zeros of multiplicity at least $k$ at all points in $mB^n \setminus \{\bm{0}\}$ and a zero of multiplicity at least $k-1$ at $\bm{0}$. Using the above expression of $P$, we now analyze $P_0$.
\begin{Lemma}\label{P_1}
	Let $P_1$ denote the polynomial consisting of those terms in the expansion of $P$ with $\sum_{i=1}^n l_i \leq k-1$ and of degree at most $mn + (m+1)(k-1)-1$. Then $P_1=P_0$.
\end{Lemma}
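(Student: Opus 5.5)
By Corollary \ref{coro3.4}, $P_0$ is the unique element of $V_{m,k}$ such that $P-P_0$ has zeros of multiplicity at least $k$ on $mB^n\setminus\{\bm 0\}$ and at least $k-1$ at $\bm 0$. So it suffices to prove two facts: that $P_1\in V_{m,k}$, and that $P-P_1$ satisfies these multiplicity conditions. Uniqueness then gives $P_1=P_0$.

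For the multiplicity conditions, I split the discarded terms $P-P_1$ into two families and treat each one.

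\emph{Terms with $\sum_i l_i\ge k$.} Such a term contains the factor $\prod_i\bigl((x_i)^{\underline{m+1}}\bigr)^{l_i}$. This factor is a product of at least $k$ polynomials of the form $(x_{i})^{\underline{m+1}}$, each vanishing to order $1$ at every point of $mB^n$. Hence the term lies in the ideal $V_1$ from the proof of Lemma \ref{c1}, and it vanishes to order at least $k$ on all of $mB^n$.

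\emph{Terms with $\sum_i l_i\le k-1$ and degree at least $mn+(m+1)(k-1)$.} For such a term, the degree is $\sum_i\bigl((m+1)l_i+r_i\bigr)\le (m+1)\sum_i l_i+mn$. This bound together with the degree condition forces $\sum_i l_i=k-1$ and $r_i=m$ for every $i$. Since $(x_i-m)^{\overline m}=(x_i-1)^{\underline m}$, the term equals $\prod_i (x_i-1)^{\underline m}\cdot\prod_i\bigl((x_i)^{\underline{m+1}}\bigr)^{l_i}$. This is a spanning element of $V_2$ from the proof of Lemma \ref{c1}. So it has zeros of multiplicity at least $k$ on $mB^n\setminus\{\bm 0\}$ and at least $k-1$ at $\bm 0$.

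Both families satisfy the required conditions, and therefore so does $P-P_1$.

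It remains to show $P_1\in V_{m,k}$. The degree bound holds by the definition of $P_1$, so the real content is that no monomial of $P_1$ is divisible by $x_{i_1}^{m+1}\cdots x_{i_k}^{m+1}$. Each retained term is a product over $i$ of one-variable polynomials $\bigl((x_i)^{\underline{m+1}}\bigr)^{l_i}(x_i-m)^{\overline{r_i}}$, each of degree $(m+1)l_i+r_i$ with $r_i\le m$. A monomial from this product has $x_i$-exponent $e_i\le (m+1)l_i+r_i$, so $\lfloor e_i/(m+1)\rfloor\le l_i$. Summing over $i$ gives $\sum_i\lfloor e_i/(m+1)\rfloor\le\sum_i l_i\le k-1<k$. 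This is exactly the condition that the monomial is not divisible by any product of $k$ factors $x_{i_j}^{m+1}$. Hence $P_1\in V_{m,k}$, and uniqueness completes the proof.

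The step needing most care is the second family of discarded terms. One must check that the degree condition really forces $r_i=m$ for every $i$, which requires $r_i\le m$ for each $i$. One must also use the identity $(x_i-m)^{\overline m}=(x_i-1)^{\underline m}$ to recognize these terms as elements of $V_2$. The argument does not need the bound $a_{k,l,r}=0$ for $(m+1)l+r>mk$ from Lemma \ref{c45}.
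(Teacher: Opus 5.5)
Your proposal is correct and follows the paper's proof essentially step for step. You show $P_1\in V_{m,k}$, then split $P-P_1$ into the terms with $\sum_i l_i\ge k$ (lying in $V_1$) and the top-degree terms with $\sum_i l_i=k-1$ and all $r_i=m$ (lying in $V_2$), and conclude by the uniqueness in Corollary \ref{coro3.4}. Your bound $\lfloor e_i/(m+1)\rfloor\le l_i$, applied to every monomial rather than only the leading one, is a welcome tightening of the paper's brief appeal to leading monomials in the divisibility check.
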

\begin{proof}
	By definition, $P_1$ consists of all terms in the expansion of $P$ of the form
	\begin{equation*}
		\prod_{i=1}^n \bigl((x_i)^{\underline{m+1}}\bigr)^{l_i}
		(x_i - m)^{\overline{r_i}},\quad 0\le r_i\le m
	\end{equation*}
	with $\sum_{i=1}^n l_i \leq k-1$ and of degree at most $mn + (m+1)(k-1)-1$. The leading monomials of these terms are
	\begin{equation*}
		\prod_{i=1}^n x_i^{(m+1){l_i}+r_i},\quad 0 \le r_i\le m.
	\end{equation*}
	Hence,
	no monomial of $P_1$ is divisible by $x_{i_1}^{m+1}\cdots x_{i_k}^{m+1}$, and all monomials of $P_1$ have degree at most $mn + (m+1)(k-1) - 1$. This means $P_1\in V_{m,k}$. Now we consider $P-P_1$. Note that each term in the expansion of $P$ is divisible by
	\[
	\prod\limits_{i=1}^{n}\bigl((x_i)^{\underline{m+1}}\bigr)^{l_i}.
	\]
	Any term with $\sum_{i=1}^{n}l_i\ge k$ has a zero of multiplicity at least $k$ at every point in $mB^n$. For terms with $\sum_{i=1}^n l_i \le k-1$, the total degree is at most $mn + (m+1)(k-1)$. Among these, the terms with degree exactly $mn + (m+1)(k-1)$ must be of the form
	\begin{equation*}
	    \prod_{i=1}^{n}\bigl((x_i)^{\underline{m+1}}\bigr)^{l_i}(x_i-m)^{\overline{m}}
	\end{equation*}
	with $\sum_{i=1}^{n}l_i= k-1$. Such terms vanish with multiplicity at least $k$ at every point of $mB^n \setminus \{\bm{0}\}$ and with multiplicity at least $k-1$ at $\bm{0}$.   
	Consequently, $P-P_1$ has zeros of multiplicity at least $k$ at every point of $mB^n \setminus \{\bm{0}\}$ and a zero of multiplicity at least $k-1$ at $\bm{0}$. By Corollary \ref{coro3.4}, it follows that $P_1 = P_0$.
\end{proof}

\begin{proof}[\bf{Proof of Claim \ref{l33}}]
	By definition, $\varphi_{m,k}(P_0)$ denotes the homogeneous component of $P_0$ of degree $mn + (m+1)(k-1)-1$.
	By Lemma \ref{P_1}, this is exactly the homogeneous component of $P_1$ of that degree.
	Recall that
	\[
	P =\prod_{i=1}^n \Bigl(
	a_{k,0,m}(x_{i}-1)^{\underline{m}}
	+\sum_{l=1}^{q_k} \sum_{r = 0}^{m} a_{k,l,r} \bigl((x_i)^{\underline{m+1}}\bigr)^l
	(x_i - m)^{\overline{r}}
	\Bigr),
	\]
	where $q_k=\lfloor\frac{mk}{m+1}\rfloor$, and $P_1$ consists of all terms in the expansion of $P$ of the form
	\begin{equation*}
		\prod_{i=1}^n \bigl((x_i)^{\underline{m+1}}\bigr)^{l_i}
		(x_i - m)^{\overline{r_i}},\quad 0\le r_i\le m
	\end{equation*}
	with $\sum_{i=1}^n l_i \leq k-1$ and of degree at most $mn + (m+1)(k-1)-1$.
	Taking leading monomials and retaining only those of degree $mn + (m+1)(k-1)-1$, we obtain $\varphi_{m,k}(P_0)$ by expanding
	\begin{equation}\label{for6}
		\prod_{i=1}^{n}
		\Bigl(a_{k,0,m}x_{i}^m
		+\sum_{l=1}^{q_k} \sum_{r = 0}^{m} a_{k,l,r} x_i^{(m+1)l+r}\Bigr)
	\end{equation}
	and keeping those monomials
	\begin{equation*}
		\prod_{i=1}^n x_i^{(m+1)l_i+r_i},\quad 0\le r_i\le m,\quad \sum_{i=1}^n l_i \le k-1
	\end{equation*}
	of total degree exactly $mn + (m+1)(k-1)-1$.
	Observe that such a term has degree $mn + (m+1)(k-1) - 1$ only if it consists of $n-1$ factors of the form $x_i^{(m+1)l_i + m}$ and one factor of the form $x_j^{(m+1)l_j + m - 1}$, with $\sum_{i=1}^{n}l_i = k-1$. Hence, $\varphi_{m,k}(P_0)$ is obtained by expanding \eqref{for6} and retaining only those monomials of degree exactly $mn + (m+1)(k-1) - 1$ in which all but one of the exponents are congruent to $m$ modulo $m+1$, and the remaining one is congruent to $m-1$ modulo $m+1$.
	By Lemma \ref{c45}, each factor in \eqref{for6} has degree at most
	$mk$. We may therefore introduce coefficients $b_0,\ldots,b_{(k-1)m}$ and write \eqref{for6} as
	\[
	\prod_{i=1}^{n}\Bigl(b_{(k-1)m}x_i^{km}+b_{(k-1)m-1}x_i^{km-1}+\cdots+b_{0}x_i^m\Bigr),
	\]
	and define $b_d = 0$ for $d \ge (k-1)m + 1$.
	Note that $a_{k,l,r}=b_{(m+1)l+r-m}$. With this notation, we obtain that
	\begin{equation}\label{for7}
		\varphi_{m,k}(P_0)=\sum_{\left(\lambda_1,\ldots,\lambda_n\right)}b_{\lambda_1}\cdots b_{\lambda_n}\cdot x_1^{\lambda_1+m}\cdots x_n^{\lambda_n+m},
	\end{equation}
	where the sum is over all $n$-tuples
	$(\lambda_1,\ldots,\lambda_n)$ of nonnegative integers with $\sum_{i=1}^n (\lambda_i+m) = mn + (m+1)(k-1)-1$ such that all but one of the $\lambda_i+m$ are congruent to $m$ modulo $m+1$, and the remaining one is congruent to $m-1$ modulo $m+1$. Equivalently, $\sum_{i=1}^n \lambda_i=(m+1)(k-1)-1$, with all but one of the $\lambda_i$ divisible by $m+1$ and the remaining one congruent to $m$ modulo $m+1$.
	Since all monomials in \eqref{for7} are divisible by $(x_1\cdots x_n)^m$, so is $\varphi_{m,k}(P_0)$. 
\end{proof}

\section{Coefficient Nonvanishing and Lagrange Inversion}\label{Sec7}
To prove Claim \ref{l34}, we compute the coefficient $Y_{m,k}$ of
$p_{(m+1)(k-1)-1}(\bm x)$ in the expansion of $\frac{\varphi_{m,k}(P_0)}{(x_1\cdots x_n)^m}$ in terms of power-sum symmetric polynomials. 
We begin by establishing the following lemma.

\begin{Lemma}\label{lem7.1}
	For any sequence $\left(\lambda_1,\ldots,\lambda_t\right)$ of positive integers with $t\le n$ and $\lambda_1+\cdots+\lambda_t \le (m+1)(k-1)-1$, when expressing
	$m_{\lambda_1,\ldots,\lambda_t}(\bm x)$
	in terms of power-sum symmetric polynomials, the coefficient of $p_{\lambda_1+\cdots+\lambda_t}(\bm x)$ is $(-1)^{t-1}(t-1)!$.
\end{Lemma}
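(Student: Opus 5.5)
Since the statement does not require the congruence conditions of Lemma~\ref{c31}, I will prove it for arbitrary positive integers $\lambda_1,\ldots,\lambda_t$ with $t\le n$. The tool is the standard inversion between augmented monomial symmetric polynomials and power sums over set partitions. For a set partition $\pi$ of $\{1,\ldots,t\}$ with blocks $B_1,\ldots,B_s$, write $p_\pi=\prod_{j=1}^s p_{\lambda(B_j)}(\bm x)$, where $\lambda(B)=\sum_{i\in B}\lambda_i$. Similarly, $m_\pi$ denotes the augmented monomial symmetric polynomial $m_{\lambda(B_1),\ldots,\lambda(B_s)}(\bm x)$.

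First, expanding the product of power sums gives
\[
p_{\lambda_1,\ldots,\lambda_t}(\bm x)=\sum_{(i_1,\ldots,i_t)} x_{i_1}^{\lambda_1}\cdots x_{i_t}^{\lambda_t}.
\]
I will group the index tuples according to the partition $\pi$ of $\{1,\ldots,t\}$ recording which positions carry equal indices. This yields $p_{\hat 0}=\sum_{\pi} m_\pi$. Applying the same identity to each coarsened sequence gives, more generally, $p_\sigma=\sum_{\pi\ge\sigma} m_\pi$ in the refinement lattice $\Pi_t$. Möbius inversion on $\Pi_t$ then gives
\[
m_{\hat 0}=\sum_{\pi}\mu(\hat 0,\pi)\,p_\pi .
\]
Here $\mu(\hat0,\pi)=\prod_{B\in\pi}(-1)^{|B|-1}(|B|-1)!$.

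Next I extract the coefficient of $p_{\lambda_1+\cdots+\lambda_t}(\bm x)$. The only partition contributing a single power sum is $\pi=\hat 1$, which contributes $\mu(\hat0,\hat1)=(-1)^{t-1}(t-1)!$. For this to be the coefficient, the power-sum products $p_\pi$ occurring must be linearly independent, so that the coefficient is well defined and other terms cannot merge into $p_{\lambda_1+\cdots+\lambda_t}$. Every other $p_\pi$ is a product of at least two power sums. The products $p_{\mu_1,\ldots,\mu_s}(\bm x)$ with $\mu_1+\cdots+\mu_s=D$ and $s\le n$ are linearly independent. This is the fact already used in Lemma~\ref{c31}; it holds because power sums $p_1,\ldots,p_n$ are algebraically independent in $n$ variables, and every part is at most $D$.

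The hypothesis $t\le n$ guarantees every $p_\pi$ has at most $t\le n$ factors, so independence applies. The degree bound $\lambda_1+\cdots+\lambda_t\le (m+1)(k-1)-1$ is only needed to match the range of the paper's other lemmas. Different partitions $\pi$ may give the same multiset of block sums, but these all have $s\ge2$ factors and never equal the single power sum $p_{\lambda_1+\cdots+\lambda_t}$. Hence the coefficient is exactly $(-1)^{t-1}(t-1)!$.

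The main obstacle is verifying the Möbius function of the partition lattice, which is standard. As an alternative self-contained route, I would induct on $t$ using the recursion from the proof of Lemma~\ref{c31}:
\[
m_{\lambda_1,\ldots,\lambda_t}=p_{\lambda_t}m_{\lambda_1,\ldots,\lambda_{t-1}}-\sum_{j=1}^{t-1}m_{\ldots,\lambda_j+\lambda_t,\ldots}.
\]
The first term contributes only products with at least two factors, so it adds nothing to the coefficient of the single power sum. Each of the $t-1$ summands contributes $(-1)^{t-2}(t-2)!$ by induction. The total is therefore $-(t-1)(-1)^{t-2}(t-2)!=(-1)^{t-1}(t-1)!$, with base case $t=1$ giving $1$. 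This induction avoids the Möbius function entirely, and I would present it as the main proof.
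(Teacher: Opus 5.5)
Your main argument, induction on $t$ via the recursion $m_{\lambda_1,\ldots,\lambda_t}=p_{\lambda_t}m_{\lambda_1,\ldots,\lambda_{t-1}}-\sum_{j=1}^{t-1}m_{\lambda_1,\ldots,\lambda_j+\lambda_t,\ldots,\lambda_{t-1}}$, is exactly the paper's proof. You discard the first term because it produces only products of at least two power sums, as the paper does, and your M\"obius-inversion route on the partition lattice is a correct alternative that gives the whole expansion $m_{\hat0}=\sum_\pi\mu(\hat0,\pi)p_\pi$ at once.

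One small fix concerns your justification of linear independence. Independence of the products $p_{\mu_1,\ldots,\mu_s}(\bm x)$ with $s\le n$ does not follow from algebraic independence of $p_1,\ldots,p_n$, because a part such as $\lambda_1+\cdots+\lambda_t$ may exceed $n$. It does follow from your own identity $p_{\hat0}=\sum_\pi m_\pi$: this writes each such product as $m_{\mu_1,\ldots,\mu_s}$ plus augmented monomial symmetric polynomials with fewer parts, and those with at most $n$ parts are nonzero with pairwise disjoint monomial supports.
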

\begin{proof}
	We proceed by induction on $t$. The base case $t=1$ is immediate.
	Suppose that the statement holds for $t-1$. Recall that
	\begin{equation*}
		m_{\lambda_1,\ldots,\lambda_t}(\bm x)=p_{\lambda_t}(\bm x)m_{\lambda_1,\ldots,\lambda_{t-1}}(\bm x) -\sum_{j=1}^{t-1}m_{\lambda_1,\ldots,\lambda_j+\lambda_t,\ldots,\lambda_{t-1}}(\bm x).
	\end{equation*}
	We now expand the terms on the right-hand side in terms of power-sum symmetric polynomials.
	All terms arising from the first part contain 
	$p_{\lambda_t}(\bm x)$ as a factor, and hence do not contribute to the term
	$p_{\lambda_1+\cdots+\lambda_t}(\bm x)$.
	For the second part, by the induction hypothesis, the coefficient of $p_{\lambda_1+\cdots+\lambda_t}(\bm x)$ is
	\begin{equation*}
		-\sum_{j=1}^{t-1}(-1)^{t-2}(t-2)! =(-1)^{t-1}(t-1)!.
	\end{equation*}
	This completes the proof.
\end{proof}

Applying Lemma \ref{lem7.1}, we obtain the following expression for $Y_{m,k}$.
\begin{Lemma}\label{Ymk0}
	The coefficient $Y_{m,k}$ can be expressed as
	\begin{equation*}
		Y_{m,k}= \sum_{\left(l_1,\ldots,l_t\right)} (-1)^{t-1} a_{k,0,m}^{n-t} a_{k,l_1,m-1}a_{k,l_2,m}\cdots a_{k,l_t,m},
	\end{equation*}
    where the sum is over all sequences $\left(l_1,\ldots,l_t\right)$ of positive integers with $l_1+\cdots+l_t=k-1$.
\end{Lemma}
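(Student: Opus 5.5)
We start from the expression \eqref{for7} for $\varphi_{m,k}(P_0)$ obtained in the proof of Claim \ref{l33}. Dividing by $(x_1\cdots x_n)^m$ gives
\[
\frac{\varphi_{m,k}(P_0)}{(x_1\cdots x_n)^m}=\sum_{(\lambda_1,\ldots,\lambda_n)} b_{\lambda_1}\cdots b_{\lambda_n}\, x_1^{\lambda_1}\cdots x_n^{\lambda_n},
\]
where the sum runs over the admissible tuples: $\sum\lambda_i=(m+1)(k-1)-1$, exactly one $\lambda_i\equiv m \pmod{m+1}$, and all others divisible by $m+1$. We regroup this sum according to the multiset of nonzero entries. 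Fix the positions where $\lambda_i\neq 0$; suppose there are $t$ of them. Then $n-t$ factors equal $b_0=a_{k,0,m}$, which accounts for $a_{k,0,m}^{n-t}$.

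Next we identify the nonzero entries. Exactly one nonzero entry is $\equiv m$, say $\lambda=(m+1)(l_1-1)+m=(m+1)l_1-1$ with $l_1\ge1$; by $a_{k,l,r}=b_{(m+1)l+r-m}$ its coefficient is $b_\lambda=a_{k,l_1,m-1}$. The remaining $t-1$ nonzero entries are $(m+1)l_j$ with $l_j\ge1$, with coefficient $b_{(m+1)l_j}=a_{k,l_j,m}$. The degree condition becomes $l_1+\cdots+l_t=k-1$.

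Summing over placements of these values among the variables, the polynomial becomes a sum of augmented monomial symmetric polynomials. We then account for multiplicities. Summing over ordered sequences $(l_1,\ldots,l_t)$, with $l_1$ distinguished as the special entry, and then over injective placements into distinct variables, counts each monomial exactly once per ordered assignment. So one could worry that $m_{\lambda_1,\ldots,\lambda_t}$, which sums over distinct index tuples, overcounts by the stabilizer of equal parts. To avoid this, we write the sum directly as
\[
\sum_{\text{ordered }(l_1,\ldots,l_t)}\ \frac{1}{t'!}\cdots
\]
and check the count carefully. Here is the cleanest version of the check. A monomial $x^{\lambda}$ with support of size $t$ corresponds to choosing which variable carries the special entry (distinguished) and an assignment of the other values. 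Summing $m_{(m+1)l_1-1,(m+1)l_2,\ldots,(m+1)l_t}$ over ordered sequences counts each monomial $(t-1)!$ times, from the orderings of the non-special positions' variables relative to the sequence. Hence
\[
\frac{\varphi_{m,k}(P_0)}{(x_1\cdots x_n)^m}=\sum_{t}\frac{1}{(t-1)!}\sum_{(l_1,\ldots,l_t)} a_{k,0,m}^{n-t}\,a_{k,l_1,m-1}\prod_{j\ge2}a_{k,l_j,m}\; m_{(m+1)l_1-1,(m+1)l_2,\ldots,(m+1)l_t}(\bm x).
\]
Here $t\le k-1\le n$, so Lemma \ref{lem7.1} applies. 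It says the coefficient of $p_{(m+1)(k-1)-1}$ in each $m_{\ldots}$ is $(-1)^{t-1}(t-1)!$, which cancels the $1/(t-1)!$ and yields the claimed formula.

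The main obstacle is this multiplicity bookkeeping: verifying that the sum over ordered sequences with a distinguished first part produces exactly the factor $(t-1)!$ per monomial. The cleanest way to confirm it is to note that $m_{\mu}$ summed over ordered distinct indices corresponds bijectively to (ordered sequence, monomial) pairs after fixing the special slot.
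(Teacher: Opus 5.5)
Your proposal is correct and follows the paper's proof: regroup \eqref{for7} into augmented monomial symmetric polynomials, check the multiplicity bookkeeping, and apply Lemma \ref{lem7.1} to each term. The only difference is in the bookkeeping. You place the special part in the first slot from the start and divide by $(t-1)!$. The paper instead symmetrizes over all $t!$ orderings and then uses the $t$ cyclic rotations to move the special part to the front, which gives the same count.
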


\begin{proof}
    From \eqref{for7}, we have
    \begin{equation*}
    	\frac{\varphi_{m,k}(P_0)}{(x_1\cdots x_n)^m}=\sum_{\left(\lambda_1,\ldots,\lambda_n\right)}b_{\lambda_1}\cdots b_{\lambda_n}\cdot x_1^{\lambda_1}\cdots x_n^{\lambda_n},
    \end{equation*}
    where the sum is over all $n$-tuples $(\lambda_1,\ldots,\lambda_n)$ of nonnegative integers satisfying $\lambda_1+\cdots+\lambda_n=(m+1)(k-1)-1$, with exactly one $\lambda_i\equiv m \pmod{m+1}$ and all others divisible by $m+1$.
    Let $t$ denote the number of positive entries among $(\lambda_1,\ldots,\lambda_n)$. We can rewrite the above equation as 
    \begin{equation}\label{for8}
    	\frac{\varphi_{m,k}(P_0)}{(x_1\cdots x_n)^m}
    	=\sum_{\left(\lambda_1,\ldots,\lambda_t\right)}b_0^{n-t}b_{\lambda_1}\cdots b_{\lambda_t}\sum_{1\le i_1<\cdots<i_t\le n} x_{i_1}^{\lambda_1}\cdots x_{i_t}^{\lambda_t},
    \end{equation}
    where the sum is over all sequences $\left(\lambda_1,\ldots,\lambda_t\right)$ of positive integers satisfying $\lambda_1+\cdots+\lambda_t=(m+1)(k-1)-1$, with exactly one $\lambda_i\equiv m \pmod{m+1}$ and all others divisible by $m+1$. 
    For each fixed $t$, the set of such $t$-tuples
    $(\lambda_1,\ldots,\lambda_t)$ is invariant under the action of the
    symmetric group $S_t$, and the coefficient
    $b_0^{n-t}b_{\lambda_1}\cdots b_{\lambda_t}$ is invariant as well.
    Averaging \eqref{for8} over all $t!$ permutations of the positions therefore gives
    \begin{equation}\label{for9}
    	\frac{\varphi_{m,k}(P_0)}{(x_1\cdots x_n)^m}
		=\sum_{(\lambda_1,\ldots,\lambda_t)} \frac{b_0^{n-t} b_{\lambda_1}\cdots b_{\lambda_t}}{t!}m_{\lambda_1,\ldots,\lambda_t}(\bm x).
    \end{equation}
    Applying Lemma \ref{lem7.1} to the terms on the right-hand
    side of \eqref{for9} yields
    \begin{equation*}
    	Y_{m,k}=\sum_{\left(\lambda_1,\ldots,\lambda_t\right)}(-1)^{t-1}(t-1)! \cdot  \frac{b_0^{n-t} b_{\lambda_1}\cdots b_{\lambda_t}}{t!}
    	=\sum_{\left(\lambda_1,\ldots,\lambda_t\right)} (-1)^{t-1}\cdot \frac{b_0^{n-t} b_{\lambda_1}\cdots b_{\lambda_t}}{t}.
    \end{equation*}
    Note that the $t$ cyclic permutations of a fixed $t$-tuple
    $(\lambda_1,\ldots,\lambda_t)$ contribute equally to the sum above,
    and exactly one of them has first entry congruent to $m$ modulo
    $m+1$. Therefore,
    \begin{equation*}
    	Y_{m,k}=\sum_{\left(\lambda_1,\ldots,\lambda_t\right)}  (-1)^{t-1} \, t \cdot \frac{b_0^{n-t} b_{\lambda_1}\cdots b_{\lambda_t}}{t} 
    	=\sum_{\left(\lambda_1,\ldots,\lambda_t\right)}(-1)^{t-1}b_0^{n-t} b_{\lambda_1}\cdots b_{\lambda_t},
    \end{equation*}
    where the sums are over all sequences $\left(\lambda_1,\ldots,\lambda_t\right)$ of positive integers with $\lambda_1+\cdots+\lambda_t=(m+1)(k-1)-1$ such that $\lambda_1\equiv m \pmod{m+1}$ and $\lambda_2,\ldots,\lambda_t$ are divisible by $m+1$.
    Writing $\lambda_1=(m+1)l_1-1$ and $\lambda_j=(m+1)l_j$ for $2\le j\le t$, we obtain
    \begin{equation*}
    	Y_{m,k}=\sum_{\left(l_1,\ldots,l_t\right)}(-1)^{t-1} b_0^{n-t} b_{(m+1)l_1-1}b_{(m+1)l_2}\cdots b_{(m+1)l_t},
    \end{equation*}
    where the sum is over all sequences $\left(l_1,\ldots,l_t\right)$ of positive integers with $l_1+\cdots+l_t=k-1$.
    Using the relation $a_{k,l,r}=b_{(m+1)l+r-m}$, we obtain the desired expression for $Y_{m,k}$.	
\end{proof}

By Lemma \ref{c45}, we have
	\begin{equation}\label{id}
		\bigl((x-1)^{\underline m}\bigr)^{k}
		= a_{k,0,m}(x-1)^{\underline m}
		+ \sum_{l=1}^{q_{k}} \sum_{r=0}^{m} a_{k,l,r}
		\bigl((x)^{\underline{m+1}}\bigr)^l
		(x - m)^{\overline{r}},
	\end{equation}
		where $q_{k}=\lfloor \frac{mk}{m+1}\rfloor$.
For the recurrences below, extend the notation $a_{k,l,r}$
to all integer indices $l,r$ by setting $a_{k,l,r}=0$ whenever
$l<0$, $r\notin\{0,\ldots,m\}$, $l=0$ and $r<m$, or
$(m+1)l+r>mk$.
\begin{Lemma}\label{lem7.3}
	 For $k\ge2$ and $0\le l\le k-1$, the coefficients $a_{k,l,r}$ satisfy
	\begin{align*}
		a_{k,l,r}&=a_{k-1,l-1,r+1}-(m-r-1)a_{k,l,r+1}, \quad 0\le r \le m-1,\\
		a_{k,l,m}&=a_{k-1,l,0}-ma_{k,l+1,0}.
	\end{align*}
\end{Lemma}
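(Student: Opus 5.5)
We derive both recurrences from the single identity obtained by multiplying the expansion \eqref{id} for $k-1$ by one more factor $(x-1)^{\underline m}$. Then we rewrite each resulting product in the basis of Lemma \ref{c45} and compare coefficients, which is justified by the uniqueness part of that lemma. Concretely, we write
\[
\bigl((x-1)^{\underline m}\bigr)^{k}=(x-1)^{\underline m}\cdot\bigl((x-1)^{\underline m}\bigr)^{k-1}
\]
and expand the second factor by \eqref{id} with $k-1$ in place of $k$. It is convenient to set $B_{l,r}=\bigl((x)^{\underline{m+1}}\bigr)^l(x-m)^{\overline r}$. With the conventions for out-of-range indices, the term $a_{k,0,m}(x-1)^{\underline m}$ is also $B_{0,m}$, since $(x-1)^{\underline m}=(x-m)^{\overline m}$. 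So every term has the form $a_{k-1,l,r}B_{l,r}$, and we must compute $(x-1)^{\underline m}B_{l,r}$ in the basis $\{B_{l,r}\}$, where degrees $(m+1)l+r$ are distinct.

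The key computation is a two-term reduction of $(x-1)^{\underline m}B_{l,r}$. We would first write $(x-1)^{\underline m}=(x-m)^{\overline m}$ and use
\[
(x-m)^{\overline r}(x-m)^{\overline m}=(x-m)^{\overline{r+1}}\cdot\frac{(x-m)^{\overline m}}{x-m+r}.
\]
A cleaner route is a multiply-by-$x$ rule. Since $(x-m)^{\overline{r+1}}=(x-m)^{\overline r}(x-m+r)$, we have
\[
x\,B_{l,r}=B_{l,r+1}+(m-r)B_{l,r}\quad(r<m),\qquad x\,B_{l,m}=B_{l+1,0}+m\,B_{l,m},
\]
where the last identity uses $x(x-m)^{\overline m}=(x)^{\underline{m+1}}$ multiplied by $(x-m)+m$. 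This gives multiplication by $x$ as a lower-bidiagonal operator $T$ on the coefficient vectors. Multiplication by $(x-1)^{\underline m}=\prod_{j=1}^m(x-j)$ is then the polynomial $\prod_j(T-j)$, and we would rather avoid expanding that directly.

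So the strategy is to run the argument backwards. We would instead define $C(x)=\bigl((x-1)^{\underline m}\bigr)^k$ and compare the expansion of $C$ itself with that of $xC$ or $(x-m+r)$-type shifts. Specifically, the recurrences say $a_{k,l,r}+(m-r-1)a_{k,l,r+1}=a_{k-1,l-1,r+1}$, which is exactly what one gets by applying the factor $(x-m+r+1)$-shift formula $x B = B_{\text{next}}+(\text{const})B$ to the identity $C_k=(x-1)^{\underline m}C_{k-1}$. The plan is therefore as follows. Write $(x-1)^{\underline m}C_{k-1}=\sum_{l,r}a_{k-1,l,r}(x-m)^{\overline m}B_{l,r}$. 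Observe that $(x-m)^{\overline m}B_{l,r}=B_{l+1,r}\cdot\frac{(x-m)^{\overline m}}{x(x-1)\cdots(x-m)}\cdot\ldots$, and simplify using $(x)^{\underline{m+1}}=x\,(x-m)^{\overline m}$. Concretely,
\[
(x-m)^{\overline m}B_{l,r}=\frac{B_{l+1,r}}{x},
\]
so $x\,C_k=\sum a_{k-1,l,r}B_{l+1,r}$. Meanwhile $x\,C_k=\sum a_{k,l,r}\,xB_{l,r}$ is computed by the rule above. Comparing coefficients of $B_{l,r+1}$ for $0\le r+1\le m$ gives $a_{k-1,l-1,r+1}=a_{k,l,r}+(m-r-1)a_{k,l,r+1}$, which is the first recurrence. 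The rule has the form $xB_{l,r}=B_{l,r+1}+(m-r)B_{l,r}$, so the coefficient of $B_{l,s}$ in $xC_k$ is $a_{k,l,s-1}+(m-s)a_{k,l,s}$, and setting $s=r+1$ gives the claim. Comparing coefficients of $B_{l+1,0}$ gives $a_{k-1,l,0}=a_{k,l,m}+m\,a_{k,l+1,0}$, which is the second recurrence.

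The main obstacle is bookkeeping at the boundary. This means the $l=0$ terms, where only $r=m$ is allowed, and the terms of degree exceeding $mk$. We must also check that all $B_{l,r}$ occurring in $xC_k$ (degree up to $mk+1$) are within range, so that uniqueness of the expansion applies. For these points we will lean on the zero-extension conventions and on $a_{k,l,r}=0$ when $(m+1)l+r>mk$ from Lemma \ref{c45}.
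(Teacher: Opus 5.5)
Your route is correct in substance and differs from the paper's. The paper multiplies the level-$(k-1)$ expansion by $(x-1)^{\underline m}$ and re-expands each product by iterating
\[
(x-1)^{\underline m}(x-m)^{\overline r}=(x)^{\underline{m+1}}(x-m)^{\overline{r-1}}-(m+1-r)(x-1)^{\underline m}(x-m)^{\overline{r-1}}.
\]
This gives closed-form alternating sums for each $a_{k,l,r}$ in terms of the $a_{k-1,\cdot,\cdot}$, which the paper then declares equivalent to the two-term recurrences (a telescoping check).

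You instead multiply once more by $x$. Since $x(x-1)^{\underline m}=(x)^{\underline{m+1}}$, the product with the level-$(k-1)$ expansion becomes the pure shift $\sum a_{k-1,l,r}B_{l+1,r}$. The other side only needs the bidiagonal action of $x$ on the basis, so the two-term recurrences come out directly by comparing coordinates of $x\bigl((x-1)^{\underline m}\bigr)^k$. This is shorter and explains why the recurrences have exactly two terms; the paper's version gives the explicit one-step formulas as a by-product.

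Your boundary worries disappear once you note the following. The family $\{B_{l,r}: l\ge 0,\ 0\le r\le m\}$ has exactly one polynomial of each degree, so it is a basis of $\mathbb{R}[x]$. By Lemma \ref{c45}, and since the degree is $mk$, the coordinates of $\bigl((x-1)^{\underline m}\bigr)^k$ in this basis are exactly the $a_{k,l,r}$, with all the zero conventions automatically satisfied. Comparing coordinates then yields both recurrences for every $l\ge 0$, with no range check needed.

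One displayed identity is wrong and must be fixed: $xB_{l,m}=B_{l+1,0}+mB_{l,m}$ is false. Because $x(x-m)^{\overline m}=(x)^{\underline{m+1}}$ exactly, you have $xB_{l,m}=B_{l+1,0}$. In other words, your rule for $r<m$ simply continues to $r=m$ with coefficient $m-r=0$.

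If you used the rule as written, the $B_{l,m}$-coordinate would become $a_{k,l,m-1}+m\,a_{k,l,m}$. That contradicts the case $r=m-1$ of the first recurrence, $a_{k,l,m-1}=a_{k-1,l-1,m}$, which is precisely the case Lemma \ref{Ymk} relies on. The term $m\,a_{k,l+1,0}$ in the second recurrence does not come from the $r=m$ rule; it comes from $xB_{l+1,0}=B_{l+1,1}+mB_{l+1,0}$.

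The coordinate formulas you actually compare are the ones the corrected rule produces: $a_{k,l,s-1}+(m-s)a_{k,l,s}$ for $1\le s\le m$, and $a_{k,l,m}+m\,a_{k,l+1,0}$ at $B_{l+1,0}$. So with this correction the proof is complete. You should also restrict the first comparison to $1\le r+1\le m$ rather than $0\le r+1\le m$, since the $B_{l,0}$-coordinate is the second recurrence.
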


\begin{proof}
	Using \eqref{id} with $k-1$ in place of $k$, we obtain
	\begin{equation*}
		\bigl((x-1)^{\underline m}\bigr)^{k-1}
		= a_{k-1,0,m}(x-1)^{\underline m}
		+ \sum_{l=1}^{q_{k-1}} \sum_{r=0}^{m} a_{k-1,l,r}
		\bigl((x)^{\underline{m+1}}\bigr)^l
		(x - m)^{\overline{r}}.
	\end{equation*}
    Multiplying both sides by $(x-1)^{\underline m}$ gives
	\begin{equation}\label{eqk}
			\bigl((x-1)^{\underline m}\bigr)^k
			= a_{k-1,0,m}\bigl((x-1)^{\underline m}\bigr)^2  + \sum_{l=1}^{q_{k-1}}\sum_{r=0}^{m} a_{k-1,l,r}
			\bigl((x)^{\underline{m+1}}\bigr)^l
			(x-1)^{\underline m}
			(x - m)^{\overline{r}}.
	\end{equation}
    For $1\le r\le m$, we have
	\begin{equation*}
		(x-1)^{\underline m}(x-m)^{\overline r}
		=(x)^{\underline{m+1}}(x-m)^{\overline{r-1}}
		-(m+1-r)(x-1)^{\underline m}(x-m)^{\overline{r-1}}.
	\end{equation*}
    By applying this identity to the term $(x-1)^{\underline m}(x - m)^{\overline{r-1}}$ and repeating the process, we eventually express $(x-1)^{\underline m}(x - m)^{\overline{r}}$ as
    \begin{equation*}
		\begin{aligned}
		\sum_{j=0}^{r-1}
			(-1)^j
			\frac{(m-r+j)!}{(m-r)!}(x)^{\underline{m+1}}
			(x - m)^{\overline{r-1-j}}
			+(-1)^r\frac{m!}{(m-r)!}
			(x-1)^{\underline m}.
		\end{aligned}
	\end{equation*}
    Substituting this expression into \eqref{eqk} and comparing coefficients with \eqref{id} gives
    \begin{align*}
	    a_{k,l,r}&=\sum_{t=r+1}^{m}(-1)^{t-1-r}\frac{(m-1-r)!}{(m-t)!}a_{k-1,l-1,t},\quad 0\le r\le m-1,\\
	    a_{k,l,m}&=a_{k-1,l,0}+\sum_{t=1}^{m}(-1)^{t}\frac{m!}{(m-t)!}a_{k-1,l,t},
    \end{align*}
    which are equivalent to the desired recurrence relations.
\end{proof}

The following expression for $a_{k,l,r}$ satisfies the recurrence relations in Lemma \ref{lem7.3} together with the initial condition $a_{1,0,m}=1$. Denote by $[x^j]F(x)$ the coefficient of $x^j$ in the formal power series $F(x)$, i.e., $[x^j]F(x)=a_j$ if $F(x)=\sum_{j\ge0} a_jx^j$, and adopt the convention $[x^j]F(x)=0$ for $j<0$. Define the formal derivative by $F'(x)=\sum_{j\ge 0}(j+1)a_{j+1}x^j$.
\begin{Lemma}\label{lem7.4}
	The coefficients $a_{k,l,r}$ defined in \eqref{id} admit the explicit formula
	\begin{equation*}
		a_{k,l,r} = (-1)^{mk-(m+1)l-r}(m!)^{k-l-1}[x^l]\Bigl(Q_m(x)^{k-l-1}x^{\overline{m-r}}\Bigr),
		\quad 0\le l\le k-1,\quad 0\le r\le m,
	\end{equation*}
    where $Q_m(x)=\prod_{j=1}^{m}\left(1+\frac{x}{j}\right)$.
\end{Lemma}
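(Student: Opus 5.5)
The plan is to verify that the right-hand side, call it $\tilde a_{k,l,r}$, satisfies the recurrences of Lemma \ref{lem7.3} together with the same boundary data as $a_{k,l,r}$. Uniqueness then forces $a_{k,l,r}=\tilde a_{k,l,r}$ by induction on $k$, and within fixed $k$ by a downward induction on $(l,r)$: the recurrences express $a_{k,l,r}$ through $a_{k-1,\cdot,\cdot}$ and through $a_{k,l,r+1}$ or $a_{k,l+1,0}$, which have larger index $(m+1)l+r$. The base of the downward induction is the vanishing $a_{k,l,r}=0$ for $(m+1)l+r>mk$. It also has to be checked in the edge cases $l=k-1$, where the recurrence may reference $a_{k,k,0}$, which is zero since $(m+1)k>mk$. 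For $k=1$ only $l=0$ is relevant. Then $\tilde a_{1,0,r}=(-1)^{m-r}[x^0]x^{\overline{m-r}}$, which is $1$ for $r=m$ and $0$ for $r<m$ since $x^{\overline{j}}$ has zero constant term when $j\ge1$. This matches $a_{1,0,m}=1$ and the convention $a_{1,0,r}=0$ for $r<m$.

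For the first recurrence, $0\le r\le m-1$, I would use the identity $x^{\overline{m-r}}=x\cdot (x+1)^{\overline{m-r-1}}$, or rather its rearrangement $x^{\overline{m-r}}=(x+m-r-1)\,x^{\overline{m-r-1}}$. Writing $j=k-l-1$ and comparing exponents, both $\tilde a_{k,l,r}$ and $\tilde a_{k-1,l-1,r+1}$ carry the factor $(m!)^{j}Q_m^{j}$. Their signs differ by $(-1)^{m+1-(m+1)}\cdot(-1)^{\ldots}$, which should work out to the signs needed. The identity then reads
\[
[x^l]\bigl(Q_m^j x^{\overline{m-r}}\bigr)=[x^{l-1}]\bigl(Q_m^j x^{\overline{m-r-1}}\bigr)+(m-r-1)[x^l]\bigl(Q_m^j x^{\overline{m-r-1}}\bigr).
\]
This is exactly the coefficient extraction of $x^{\overline{m-r}}=x\cdot x^{\overline{m-r-1}}+(m-r-1)x^{\overline{m-r-1}}$, with the sign $(-1)$ absorbed by the change $r\to r+1$ in the exponent $mk-(m+1)l-r$. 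I will carefully track that sign.

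The second recurrence, $a_{k,l,m}=a_{k-1,l,0}-m\,a_{k,l+1,0}$, is where $Q_m$ enters and is the main obstacle. Here $\tilde a_{k,l,m}=\pm(m!)^{j}[x^l]Q_m^{j}$, while $\tilde a_{k-1,l,0}=\pm(m!)^{j-1}[x^l]Q_m^{j-1}x^{\overline m}$. The key observation is $x^{\overline m}\cdot\frac{m+x}{m}\cdot$ adjusted, i.e. $m!\,Q_m(x)=(x+1)^{\overline m}=x^{\overline{m+1}}/x$. Hence $(m!)Q_m^{j}=Q_m^{j-1}(x+1)^{\overline m}$, and
\[
(x+1)^{\overline m}=\frac{x+m}{x}\,x^{\overline m}=x^{\overline m}\cdot\frac{1}{x}\cdot x+ m\cdot\frac{x^{\overline m}}{x}.
\]
Coefficientwise, $[x^l]Q_m^{j-1}(x+1)^{\overline m}=[x^l]Q_m^{j-1}x^{\overline m}+m[x^{l+1}]Q_m^{j-1}x^{\overline m}$. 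The last term is $\tilde a_{k,l+1,0}$ up to sign and the power $(m!)^{k-l-2}$, which matches because $j$ drops by one when $l$ increases. I expect the remaining work to be bookkeeping: the signs $(-1)^{mk-(m+1)l-r}$ across the three terms, and the boundary cases $l=k-1$ (where $j=0$ and $Q_m^{-1}$ must not appear, so I would treat $\tilde a_{k,k-1,r}$ directly and note $\tilde a_{k,k,0}=0$). I will also check that $\tilde a$ vanishes when $(m+1)l+r>mk$. This holds because $Q_m^{j}x^{\overline{m-r}}$ has degree $mj+m-r$, and the condition $l>mj+m-r$ is exactly $(m+1)l+r>mk$.
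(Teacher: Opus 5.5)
Your proposal is correct and follows essentially the same route as the paper. Both verify the two recurrences of Lemma~\ref{lem7.3} for the closed form via $x^{\overline{m-r}}=(x+m-r-1)x^{\overline{m-r-1}}$ and $m!\,Q_m(x)=(x+1)^{\overline m}$ (equivalent to the paper's $(x+m)x^{\overline m}=m!\,xQ_m(x)$), check $k=1$ and the boundary $l=k-1$, and conclude by induction on $k$ with a downward induction in $(l,r)$. The only cosmetic difference is that you anchor the downward induction in the vanishing region $(m+1)l+r>mk$, whereas the paper starts at $l=k-1$ using the zero-extension convention; both are equally valid.
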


\begin{proof}
Let $A_{k,l,r}$ denote the right-hand side of the claimed formula
for $0\le l\le k-1$ and $0\le r\le m$, and set
$A_{k,l,r}=0$ outside this range.
To verify the first recurrence, fix $0\le l\le k-1$ and
$0\le r\le m-1$, and put $S=k-l-1$. Since
\[
x^{\overline{m-r}}
=(x+m-r-1)x^{\overline{m-r-1}},
\]
we obtain
\[
[x^{l-1}]
\bigl(Q_m(x)^Sx^{\overline{m-r-1}}\bigr)
+(m-r-1)[x^l]
\bigl(Q_m(x)^Sx^{\overline{m-r-1}}\bigr)
=[x^l]\bigl(Q_m(x)^Sx^{\overline{m-r}}\bigr).
\]
Multiplying by $(-1)^{mk-(m+1)l-r}(m!)^S$ yields
\[
A_{k,l,r}
=A_{k-1,l-1,r+1}-(m-r-1)A_{k,l,r+1}.
\]
For the second recurrence, first suppose that $0\le l\le k-2$, so that
$S\ge1$. The identity
\[
(x+m)x^{\overline m}=m!\,xQ_m(x)
\]
gives
\[
[x^l]\bigl(Q_m(x)^{S-1}x^{\overline m}\bigr)
+m[x^{l+1}]
\bigl(Q_m(x)^{S-1}x^{\overline m}\bigr)
=m![x^l]Q_m(x)^S.
\]
After multiplication by
$(-1)^{m(k-1)-(m+1)l}(m!)^{S-1}$, this becomes
\[
A_{k,l,m}=A_{k-1,l,0}-mA_{k,l+1,0}.
\]
At the remaining boundary $l=k-1$, the explicit formula gives
$A_{k,k-1,m}=(-1)^{1-k}[x^{k-1}]1=0$ because $k\ge2$, while
$A_{k-1,k-1,0}=A_{k,k,0}=0$ by definition. Hence the second
recurrence also holds at this boundary.

Finally, $A_{1,0,m}=1$, while $A_{1,0,r}=0$ for $0\le r<m$, in
agreement with the zero-extension convention for $a_{k,l,r}$. The
recurrences determine all coefficients uniquely by induction on
$k$: for fixed $k\ge2$, proceed downward in $l$ from $k-1$ to $0$,
using the second recurrence to determine the entry with $r=m$ and
then the first recurrence successively for $r=m-1,\ldots,0$. Thus
$A_{k,l,r}=a_{k,l,r}$.
\end{proof}

To derive a generating function expression for $Y_{m,k}$, we recall the following two equivalent forms of the Lagrange inversion formula (see \cite[Theorem 2.1.1]{Gessel}).
\begin{Lemma}[Gessel, 2016]\label{lagrange0}
	Let $F(x), G(x), H(x)$ be formal power series with $G(0)\neq 0$. Then
	\begin{equation*}
		\begin{alignedat}{2}
			[z^j]F(y) \;\;&= \;\;\frac{1}{j}[x^{j-1}]\bigl(F'(x)G(x)^j\bigr), \quad j\ge 1,\\
			[z^j]\frac{H(y)}{1 - z G'(y)}\;\;& =\;\; [x^j]\bigl(H(x)G(x)^j\bigr), \quad j\ge0.
		\end{alignedat}
	\end{equation*}
	where $y=y(z)$ is defined implicitly by
	$y = z G(y)$.
\end{Lemma}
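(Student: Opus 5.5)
The statement is the classical Lagrange inversion formula. I will prove it by computing formal residues and changing variables between $z$ and $x$, treating both identities uniformly. For a formal Laurent series $f$ with finitely many negative powers, write $\operatorname{Res}_x f$ for the coefficient of $x^{-1}$, so that $[x^j]F=\operatorname{Res}_x F(x)x^{-j-1}$.

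\emph{Setup.} Since $G(0)\ne0$, the series $w(x)=x/G(x)$ satisfies $w(0)=0$ and $w'(0)=1/G(0)\ne0$. Hence $w$ has a compositional inverse $y(z)$ with $y(0)=0$. The relation $y=zG(y)$ says exactly that $z=w(y)$, so this $y$ is the unique solution of $y=zG(y)$ with $y(0)=0$, which gives existence and uniqueness of $y$.

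\emph{Change of variables for residues.} Let $u$ be any power series with $u(0)=0$ and $u'(0)\ne0$. I will show that $\operatorname{Res}_z f(z)=\operatorname{Res}_x f(u(x))\,u'(x)$ for every formal Laurent series $f$. By linearity it suffices to check this for $f=z^n$.
\begin{itemize}
\item If $n\ne-1$, then $u^nu'=\bigl(u^{n+1}/(n+1)\bigr)'$ is a formal derivative, so its residue is $0$.
\item If $n=-1$, then $u'/u=1/x+(\text{power series})$, so its residue is $1$.
\end{itemize}
I will apply this with $u=w$ and $f(z)=F(y(z))z^{-j-1}$. Since $f(w(x))=F(x)w(x)^{-j-1}$, this gives
\[
[z^j]F(y)=\operatorname{Res}_x F(x)\,w(x)^{-j-1}w'(x).
\]

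\emph{First identity.} For $j\ge1$ we have $w^{-j-1}w'=\bigl(-w^{-j}/j\bigr)'$. Integration by parts is valid here because the residue of the derivative $(Fw^{-j})'$ vanishes, and it gives
\[
\operatorname{Res}_x Fw^{-j-1}w'=\tfrac1j\operatorname{Res}_x F'(x)\,w(x)^{-j}=\tfrac1j\operatorname{Res}_x F'(x)G(x)^jx^{-j}=\tfrac1j[x^{j-1}]\bigl(F'G^j\bigr).
\]

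\emph{Second identity.} Substituting $z=w(x)$, so that $y=x$, turns $1-zG'(y)$ into $1-xG'(x)/G(x)=(G-xG')/G$. This series has constant term $1$ and is therefore invertible. On the other hand, $w'=(G-xG')/G^2$. In the product
\[
H(x)\cdot\frac{G}{G-xG'}\cdot\frac{G^{j+1}}{x^{j+1}}\cdot\frac{G-xG'}{G^2}
\]
the factors $G-xG'$ cancel, and what remains is $H(x)G(x)^jx^{-j-1}$. Its residue is $[x^j]\bigl(HG^j\bigr)$, which is the claimed formula for all $j\ge0$.

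The main obstacle is the residue change-of-variables lemma, in particular the careful bookkeeping that all the Laurent series involved are well defined. The remaining steps are direct algebra.
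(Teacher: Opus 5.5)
The paper gives no proof of this lemma. It is quoted from Gessel's survey \cite[Theorem~2.1.1]{Gessel} and used as a black box in Corollary~\ref{lagrange} and Lemma~\ref{Ymk}. Your argument is a correct, self-contained proof by the standard formal-residue method, so the comparison is between citing and proving.

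What your route buys is that both identities come from the single substitution $z=w(x)$ with $w=x/G$. The first follows after one integration by parts using $w^{-j-1}w'=(-w^{-j}/j)'$. The second follows because the Jacobian $w'=(G-xG')/G^2$ cancels the factor $G/(G-xG')$ that $1/(1-zG'(y))$ becomes once $y=x$. This also gives existence and uniqueness of $y$ for free. It makes transparent why the paper can call the two forms ``equivalent'': they are the same residue computation with different integrands. The citation, by contrast, keeps the paper short at the cost of self-containedness.

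The only step to tighten is the reduction ``by linearity'' to $f=z^n$, since $f$ is in general an infinite sum. Split $f$ into its finite principal part and a power series $g=\sum_{n\ge0}c_nz^n$. Then $g(u(x))u'(x)=\sum_{n\ge0}c_nu^nu'$ converges $x$-adically, because $u^n$ has order $n$, and it is a power series. So both sides of your change-of-variables identity vanish on $g$, and genuine (finite) linearity handles the principal part. You also implicitly divide by $n+1$ and by $j$. That is harmless here, since everything is over $\mathbb{R}$, but it is worth stating.
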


\begin{coro}\label{lagrange}
	Let $G(x),H(x)$ be formal power series with $G(0)\neq 0$. Then
	\begin{equation*}
		\sum_{j\ge 0} z^j [x^j]\bigl(H(x)G(x)^j\bigr)
		=\frac{H(y)}{1 - z G'(y)},
	\end{equation*}
	where $y=y(z)$ is defined implicitly by
	$y = z G(y)$.
\end{coro}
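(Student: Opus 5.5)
This follows directly from the second identity of Lemma \ref{lagrange0}: multiply it by $z^j$ and sum over all $j\ge 0$. The right-hand sides then add up to the left-hand side of the corollary, $\sum_{j\ge0} z^j[x^j]\bigl(H(x)G(x)^j\bigr)$. The left-hand sides add up to $\sum_{j\ge0} z^j [z^j]\bigl(H(y)/(1-zG'(y))\bigr)$, and a formal power series in $z$ is the sum of its coefficients times the corresponding powers of $z$, so this sum is just $H(y)/(1-zG'(y))$.

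The only points needing care are well-definedness issues in formal power series.
\begin{itemize}
\item The equation $y=zG(y)$ has a unique formal power series solution $y(z)$ with $y(0)=0$. Its coefficients are determined recursively because $G(0)\ne 0$ is a constant, and in fact $y = G(0)z+O(z^2)$.
\item The compositions $H(y)$ and $G'(y)$ are well defined, since $y$ has zero constant term.
\item The series $1-zG'(y)$ has constant term $1$, so it is invertible in $\mathbb{R}[[z]]$.
\end{itemize}
Hence the right-hand side $H(y)/(1-zG'(y))$ is a genuine formal power series in $z$, and expanding it coefficientwise is legitimate. Lemma \ref{lagrange0} identifies each of its coefficients, which gives the claimed equality.

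There is no substantial obstacle here; the corollary is a repackaging of the second form of Lagrange inversion as a generating-function identity. I will present it in exactly this form so that it can later be applied with $H$ and $G$ built from $Q_m(x)$ and rising factorials, matching the coefficients $[x^l]\bigl(Q_m(x)^{k-l-1}x^{\overline{m-r}}\bigr)$ from Lemma \ref{lem7.4} when summing $Y_{m,k}$ over $k$.
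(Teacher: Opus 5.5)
Your proposal is correct and matches the paper, which states this corollary without separate proof as the generating-function form of the second identity in Lemma \ref{lagrange0}: multiply by $z^j$ and sum over $j\ge 0$. Your well-definedness checks are sound, but your closing remark about later use is slightly off: in Lemma \ref{Ymk} the corollary is applied with $k$ fixed, $G=1/Q_m$ and $H=Q_m^{k-2}$ or $Q_m^{k-1}$, summing over the index $j$ (not over $k$).
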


We are now ready to derive the following expression for $Y_{m,k}$.
\begin{Lemma}\label{Ymk}
	The coefficient $Y_{m,k}$ is given by
	\begin{equation*}
		Y_{m,k}
		=(-1)^{m(k-1)(n-1)-k}(m!)^{(n-1)(k-1)}
		\frac{1}{k-1}[x^{k-2}]Q_m(x)^{-(k-1)},
	\end{equation*}
	where $Q_m(x)=\prod_{j=1}^{m}\left(1+\frac{x}{j}\right)$.
\end{Lemma}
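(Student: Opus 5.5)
The plan is to start from the expression in Lemma \ref{Ymk0}, substitute the explicit formula of Lemma \ref{lem7.4}, and recognise the sum over compositions $(l_1,\ldots,l_t)$ of $k-1$ as a coefficient in a ratio of generating functions. Corollary \ref{lagrange} should make that ratio collapse to $y(z)$, after which the first form of Lemma \ref{lagrange0} extracts the coefficient. Throughout, put $K=k-1$.

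\textbf{Step 1 (normalise).} Lemma \ref{lem7.4} with $l=0$, $r=m$ gives
\[
a_{k,0,m}=(-1)^{mK}(m!)^{K},
\]
since $[x^0]Q_m^K=1$. For $l\ge1$ it gives
\[
a_{k,l,m}=(-1)^{mk-(m+1)l-m}(m!)^{K-l}[x^l]Q_m(x)^{K-l}.
\]
Since $x^{\overline{1}}=x$, it also gives
\[
a_{k,l,m-1}=(-1)^{mk-(m+1)l-m+1}(m!)^{K-l}[x^{l-1}]Q_m(x)^{K-l}.
\]
Factor $a_{k,0,m}^{n}$ out of $Y_{m,k}$ and divide each remaining $a_{k,l_j,\cdot}$ by $a_{k,0,m}$. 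Each ratio then carries only the sign $(-1)^{(m+1)l_j}$ (times one extra $-1$ for the $r=m-1$ factor) and the power $(m!)^{-l_j}$. Because $\sum l_j=K$ is fixed, these sign and $m!$ factors multiply to a global constant. That constant can be absorbed by the substitution $z\mapsto (-1)^{m+1}z/m!$, and it should produce exactly the prefactor $(-1)^{m(k-1)(n-1)-k}(m!)^{(n-1)(k-1)}$. I would do this bookkeeping last and carefully.

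\textbf{Step 2 (generating functions).} Define
\[
c_l=[x^l]Q_m^{K-l}\quad(l\ge1),\qquad d_l=[x^{l-1}]Q_m^{K-l},
\]
and set $A(z)=\sum_{l\ge1}c_lz^l$ and $B(z)=\sum_{l\ge1}d_lz^l$. Up to the constant from Step 1, the alternating sum $\sum_t(-1)^{t-1}d_{l_1}c_{l_2}\cdots c_{l_t}$ over compositions of $K$ equals
\[
[z^K]\,B(z)\sum_{s\ge0}(-A(z))^s=[z^K]\,\frac{B(z)}{1+A(z)}.
\]
The key observation is that
\[
[x^l]Q_m^{K-l}=[x^l]\bigl(Q_m^K\,(Q_m^{-1})^l\bigr),
\]
so with $G=Q_m^{-1}$ (note $G(0)=1\ne0$) and $y=zG(y)$, Corollary \ref{lagrange} gives
\[
1+A(z)=\frac{Q_m(y)^K}{1-zG'(y)}.
\]
Similarly,
\[
B(z)=z\sum_{j\ge0}z^j[x^j]\bigl(Q_m^{K-1}G^j\bigr)=\frac{zQ_m(y)^{K-1}}{1-zG'(y)}.
\]
Hence
\[
\frac{B}{1+A}=\frac{z}{Q_m(y)}=zG(y)=y(z).
\]

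\textbf{Step 3 (extract).} By the first formula of Lemma \ref{lagrange0} with $F(x)=x$,
\[
[z^K]\,y=\frac1K[x^{K-1}]G(x)^K=\frac{1}{k-1}[x^{k-2}]Q_m(x)^{-(k-1)}.
\]
Combining this with the constant from Step 1 gives the formula in Lemma \ref{Ymk}.

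\textbf{Expected obstacle.} The conceptual step is recognising that every exponent $K-l$ can be rewritten as $Q_m^K\cdot G^l$, which is exactly what makes Corollary \ref{lagrange} applicable; I expect this to go through as above. The error-prone part is the sign and factorial bookkeeping in Step 1. This includes confirming that the extra $-1$ of the $r=m-1$ factor, the signs $(-1)^{(m+1)l_j}$, and the leftover $a_{k,0,m}^{n}$ combine to $(-1)^{m(k-1)(n-1)-k}(m!)^{(n-1)(k-1)}$. It also includes checking that the zero-extension conventions do not affect the sum, since $a_{k,l,\cdot}$ with $l\ge k$ never occurs when $\sum l_j=k-1$.
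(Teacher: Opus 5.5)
Your proposal is correct and follows the paper's proof essentially step for step. You use the same reduction of the composition sum to a coefficient of a generating-function ratio, which Corollary \ref{lagrange} with $G=Q_m^{-1}$ collapses to $y(z)$, followed by the first Lagrange formula with $F(x)=x$. The differences are cosmetic: you read off $a_{k,l,m-1}$ directly from Lemma \ref{lem7.4} using $x^{\overline{1}}=x$ instead of the recurrence $a_{k,l,m-1}=a_{k-1,l-1,m}$, and you put the extra factor $z$ into your $B(z)$ rather than writing $A/C=y/z$.

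The bookkeeping you deferred does close. Normalizing by $a_{k,0,m}=(-1)^{m(k-1)}(m!)^{k-1}$ leaves the constant $(-1)^{m(k-1)n+1+(m+1)(k-1)}(m!)^{(k-1)(n-1)}$, whose sign exponent is congruent to $m(k-1)(n-1)-k$ modulo $2$.
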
 

\begin{proof} 
   By Lemma \ref{lem7.3}, we have $a_{k,l,m-1}=a_{k-1,l-1,m}$. 
   Substituting this into the expression in Lemma \ref{Ymk0} yields
   \begin{equation*}
   	Y_{m,k}= \sum_{t\ge 1}(-1)^{t-1}a_{k,0,m}^{n-t} \sum_{\left(l_1,\ldots,l_t\right)}  a_{k-1,l_1-1,m}a_{k,l_2,m}\cdots a_{k,l_t,m},
   \end{equation*}
   where the sum is over all sequences $\left(l_1,\ldots,l_t\right)$ of positive integers with $l_1+\cdots+l_t=k-1$.
   By Lemma \ref{lem7.4} with $r=m$, we have
   \begin{equation}\label{aklm}
   	a_{k,l,m} = (-1)^{m(k-1)-(m+1)l}(m!)^{k-l-1}
   	[x^l]Q_m(x)^{k-l-1}.
   \end{equation}
   For convenience, define
   $B_{k,l}=[x^l]Q_m(x)^{k-l-1}$ for all $l\ge0$. This is well-defined
   because $Q_m(0)=1$, so $Q_m(x)$ is invertible as a formal power
   series.
   Substituting \eqref{aklm} into the expression for $Y_{m,k}$, we obtain
   \begin{equation*}
   	Y_{m,k}
   	=(-1)^{m(k-1)(n-1)-k} (m!)^{(n-1)(k-1)}\sum_{t\ge 1}(-1)^{t-1} \sum_{\left(l_1,\ldots,l_t\right)} B_{k-1,l_1-1}B_{k,l_2}\cdots B_{k,l_t},
   \end{equation*}
   where the sum is over all sequences $\left(l_1,\ldots,l_t\right)$ of positive integers with $l_1+\cdots+l_t=k-1$.
   To evaluate this sum, define two ordinary generating functions by
   \begin{equation*}
   A(z)=\sum_{j\ge 0} B_{k-1,j}z^j,
   \qquad
   B(z)=\sum_{l\ge 1} B_{k,l}z^l.
   \end{equation*}
   Then $Y_{m,k}$ can be written as
   \begin{align*}
   Y_{m,k}
	&=(-1)^{m(k-1)(n-1)-k} (m!)^{(n-1)(k-1)}[z^{k-2}]\Bigl(\sum_{s\ge 0}(-1)^{s}A(z)B(z)^{s}\Bigr)\\
	&=(-1)^{m(k-1)(n-1)-k} (m!)^{(n-1)(k-1)}[z^{k-2}]
   	\frac{A(z)}{1+B(z)}.
   \end{align*}
   Let $C(z)=1+B(z)=\sum_{l\ge 0} B_{k,l}z^l$.
   Recall that $B_{k,l}=[x^l]Q_m(x)^{k-1-l}$. Thus,
   \begin{equation*}
   A(z)=\sum_{j\ge 0} z^j [x^j]\bigl(Q_m(x)^{k-2} Q_m(x)^{-j}\bigr),
   \qquad
   C(z)=\sum_{l\ge 0} z^l [x^l]\bigl(Q_m(x)^{k-1} Q_m(x)^{-l}\bigr).
   \end{equation*}
   Set $G(x)=\frac{1}{Q_m(x)}$, and let $y=y(z)$ be defined implicitly by $y=zG(y)$.
   Applying Corollary \ref{lagrange} with $H(x)=Q_m(x)^{k-2}$ and $H(x)=Q_m(x)^{k-1}$, respectively, we obtain
   \begin{equation*}
   A(z)=\frac{Q_m(y)^{k-2}}{1-zG'(y)}, 
   \qquad 
   C(z)=\frac{Q_m(y)^{k-1}}{1-zG'(y)}.
   \end{equation*}
   Hence $\frac{A(z)}{C(z)}=\frac{1}{Q_m(y)}=\frac{y(z)}{z}$.	
   Consequently,
   \[
   Y_{m,k}
   =(-1)^{m(k-1)(n-1)-k}(m!)^{(n-1)(k-1)}
   [z^{k-1}]y(z).
   \]
   Applying Lemma \ref{lagrange0} with $F(x)=x$ gives
   \begin{equation*}
   [z^{k-1}]y(z)
   =
   \frac{1}{k-1}[x^{k-2}]G(x)^{k-1}=\frac{1}{k-1}[x^{k-2}]Q_m(x)^{-(k-1)}.
   \end{equation*}
   This completes the proof.
\end{proof}

\begin{proof}[\bf{Proof of Claim \ref{l34}}]
	 Fix $k\ge 2$. By Lemma \ref{Ymk}, $Y_{m,k}\neq 0$ is equivalent to
	 \begin{equation*}
	 [x^{k-2}]Q_m(x)^{-(k-1)}\neq 0.
	\end{equation*}
	 Recall that $Q_m(x)=\prod_{j=1}^{m}\left(1+\frac{x}{j}\right)$. Then we have
	 \begin{equation*}
	 Q_m(x)^{-(k-1)}
	 =\prod_{j=1}^{m}
	 \sum_{s\ge 0}(-1)^s \binom{k+s-2}{s}\left(\frac{x}{j}\right)^s.
	\end{equation*}
	 Therefore,
	 \begin{equation*}
	 [x^{k-2}]Q_m(x)^{-(k-1)}
	 =(-1)^{k-2}\sum_{\left(s_1,\ldots,s_m\right)}\prod_{j=1}^m \binom{k+s_j-2}{s_j}\frac{1}{j^{s_j}}\neq 0,
	\end{equation*}
    where the sum is over all sequences
    $\left(s_1,\ldots,s_m\right)$ of nonnegative integers satisfying $s_1+\cdots+s_m=k-2$.
\end{proof}

\section*{Acknowledgements}
The authors would like to thank Haochen Liu for many helpful discussions. This paper is supported by the National Natural Science Foundation of China (Grant No. 12571350) and the Guangdong Basic and Applied Basic Research Foundation (Grant No. 2025A1515010457).

\appendix
\section{Explicit Constructions for \texorpdfstring{$f(2B^n,3)$}{f(2B to the n, 3)}}\label{appendix}
We present explicit constructions that determine the values of $f(2B^n,3)$ for $n=2,3,4$; see Table \ref{tab:almost-3-covers}. 
Each construction is specified by a multiset of hyperplanes. In the table,
$H^{[r]}$ denotes $r$ copies of $H$, and an unmarked hyperplane occurs once.

\begin{table}[H]
    \centering
    \caption{Explicit constructions attaining $f(2B^n,3)$ for $n=2,3,4$.}
    \label{tab:almost-3-covers}
    \vspace{2pt}
    \small
    \renewcommand{\arraystretch}{1.35}
    \begin{tabular}{@{\hspace{4pt}}c l c@{\hspace{4pt}}}
        \toprule
        $n$ & \multicolumn{1}{c}{Hyperplane multiset} & Size \\
        \midrule
        $2$
        & $\begin{array}{l}
          x_1=1,\ (x_1=2)^{[2]},\ x_2=1,\ (x_2=2)^{[2]},\\
          (x_1+x_2=1)^{[2]},\ x_1+x_2=2
          \end{array}$
        & $9$ \\
        \midrule
        $3$
        & $\begin{array}{l}
          x_1=1,\ x_1=2,\ x_2=1,\ x_2=2,\\
          x_3=1,\ x_3=2,\ x_1+x_3=2,\ x_2+x_3=2,\\
          x_1+x_2+x_3=1,\ x_1+x_2-x_3=1,\ x_1+x_2+2x_3=2
          \end{array}$
        & $11$ \\
        \midrule
        $4$
        & $\begin{array}{l}
          x_1=1,\ x_1=2,\ x_2=1,\ x_3=1,\\
          x_3=2,\ x_4=1,\ x_4=2,\ x_1+x_2=2,\\
          x_2+x_3=2,\ x_2+x_4=2,\ x_1+x_2+x_3+x_4=1,\\
          x_1-x_2+x_3+x_4=1,\ x_1+2x_2+x_3+x_4=2
          \end{array}$
        & $13$ \\
        \bottomrule
    \end{tabular}
\end{table}

\noindent Recall that $f(2B^n,3)$ denotes the minimum size of a multiset of hyperplanes that covers each point of $2B^n \setminus \{\bm{0}\}$ at least three times while leaving $\bm{0}$ uncovered.
Every hyperplane listed in the table avoids $\bm{0}$. For $n=2,3,4$,
a direct enumeration verifies that every point of
$2B^n\setminus\{\bm{0}\}$ is covered at least three times
by the corresponding multiset of hyperplanes. Thus the constructions yield
\[
f(2B^n,3)\le 2n+5\qquad (n=2,3,4).
\]
Combined with the lower bound
$f(2B^n,3)\ge 2n+5$ from Theorem \ref{main-1}, this proves
\[
f(2B^2,3)=9,\qquad f(2B^3,3)=11,\qquad f(2B^4,3)=13.
\]

\end{document}